\documentclass{amsart}
\usepackage{amssymb}
\usepackage{amsfonts}
\usepackage{amssymb}
\usepackage{amsmath, accents}
\usepackage{amsthm}
\usepackage{pdfpages}
\usepackage{hyperref}
\usepackage{enumerate}
\usepackage{tabularx}
\usepackage{centernot}
\usepackage{mathtools}
\usepackage{stmaryrd}
\usepackage{amsthm,amssymb}
\usepackage{etoolbox}
\usepackage{tikz}
\usepackage{amssymb}
\usetikzlibrary{matrix}
\usepackage{tikz-cd}
\usepackage{tikz}
\usepackage{marginnote}
\definecolor{mygray}{gray}{0.85}
\usepackage[backgroundcolor=mygray,colorinlistoftodos,prependcaption,textsize=small]{todonotes}
\usepackage{xcolor}

\renewcommand{\leq}{\leqslant}
\renewcommand{\geq}{\geqslant}

\makeatletter
\def\subsection{\@startsection{subsection}{3}%
  \z@{.5\linespacing\@plus.7\linespacing}{.3\linespacing}%
  {\bfseries\centering}}
\makeatother

\makeatletter
\def\subsubsection{\@startsection{subsubsection}{3}%
  \z@{.5\linespacing\@plus.7\linespacing}{.3\linespacing}%
  {\centering}}
\makeatother

\makeatletter
\def\myfnt{\ifx\protect\@typeset@protect\expandafter\footnote\else\expandafter\@gobble\fi}
\makeatother

\newtheorem{theorem}{Theorem}[section]

\newtheorem{corollary}[theorem]{Corollary}
\newtheorem{definition}[theorem]{Definition}
\newtheorem{lemma}[theorem]{Lemma}

\newtheorem{observation}[theorem]{Observation}
\newtheorem{fact}[theorem]{Fact}
\newtheorem{conclusion}[theorem]{Conclusion}

\newtheorem{remark}[theorem]{Remark}

\newtheorem{notation}[theorem]{Notation}

\newtheorem{hypothesis}[theorem]{Hypothesis}
\newtheorem{cclaim}[theorem]{Claim}

\newtheorem{definition-proposition}[theorem]{Definition/Proposition}

\newtheorem{convention}[theorem]{Convention}

\newtheorem*{maintheorem}{Main Theorem}

\newcounter{claimcounter}
\numberwithin{claimcounter}{theorem}


\usepackage{eso-pic}


\newcommand{\mrm}[1]{\mathrm{#1}}

\setcounter{MaxMatrixCols}{20}

\setcounter{tocdepth}{1}


\begin{document}

\begin{abstract} We prove that the Borel space of torsion-free Abelian groups with domain $\omega$ is Borel complete, i.e., the isomorphism relation on this Borel space is as complicated as possible, as an isomorphism relation. This solves a long-standing open problem in descriptive set theory, which dates back to the seminal paper on Borel reducibility of Friedman and Stanley from 1989. 
\end{abstract}

\title[Torsion-Free Abelian Groups are Borel Complete]{Torsion-Free Abelian Groups are Borel Complete}


\thanks{No. 1205 on Shelah's publication list. Research of both authors partially supported by NSF grant no: DMS 1833363. Research of the first author partially supported by project PRIN 2017 ``Mathematical Logic: models, sets, computability", prot. 2017NWTM8R. Research of the second author partially supported by Israel Science Foundation (ISF) grant no: 1838/19. We would like to thank C. Laskowski, D. Ulrich, and the anonymous referee for useful comments that lead to a simplification of the main construction of this paper.}

\author{Gianluca Paolini}
\address{Department of Mathematics ``Giuseppe Peano'', University of Torino, Via Carlo Alberto 10, 10123, Italy.}

\author{Saharon Shelah}
\address{Einstein Institute of Mathematics,  The Hebrew University of Jerusalem, Israel \and Department of Mathematics,  Rutgers University, U.S.A.}

\date{\today}
\maketitle


\section{Introduction}

	Since the seminal paper of Friedman and Stanley on Borel complexity \cite{friedman_and_stanley}, descriptive set theory has proved itself to be a decisive tool in the analysis of complexity problems for classes of countable structures. A canonical example of this phenomenon is the famous result of Thomas from \cite{thomas_JAMS} which shows that the complexity of the isomorphism relation for torsion-free abelian groups of rank $1 \leq n < \omega$ (denoted as $\cong_n$) is strictly increasing with $n$, thus, on one hand, finally providing a satisfactory reason for the difficulties found by many eminent mathematicians in finding systems of invariants for torsion-free abelian groups of rank $2 \leq n < \omega$ which were as simple as the one provided by Baer for $n = 1$ (see \cite{baer}), and, on the other hand, showing that for no $1 \leq n < \omega$ the relation $\cong_n$ is universal among countable Borel equivalence relations.
	As a matter of facts, abelian group theory has been one of the most important fields of mathematics from which taking inspiration for forging the general theory of Borel complexity as well as for finding some of the most striking applications thereof. The present paper continues this tradition solving one of the most important problems in the area, a problem open since the above mentioned paper of Friedman and Stanley from 1989. In technical terms, we prove that the space of countable torsion-free abelian groups with domain $\omega$ is {\em Borel complete}.

	As we will see in detail below, saying that a class of countable structures is Borel complete means that the isomorphism relation on this class is as complicated as possible, as an isomorphism relation. The Borel completeness of countable abelian group theory is particularly interesting from the perspective of model theory, as this class is model theoretically ``low'', i.e., stable (in the terminology of \cite{classification}). In fact, as already observed in \cite{friedman_and_stanley}, Borel reducibility can be thought of as a weak version of $\mathfrak{L}_{\omega_1, \omega}$-interpretability, and for other classes of countable structures such as groups or fields much stronger results than Borel completeness exist, as in such cases we can first-order interpret graph theory, but such classes are  unstable, while abelian group theory is stable. Reference \cite{las} starts a systematic study of the relations between Borel reducibility and classification theory in the context of $\aleph_0$-stable theories. 

\smallskip

	Coming back to us, we now introduce the notions from descriptive set theory which are necessary to understand our results, and we try to make a complete historical account of the problems which we tackle in this paper.
		The starting point of the descriptive set theory of countable structures is the following fact:
	
	\begin{fact}\label{borel_structure} The set $\mathrm{K}_{\omega}^L$ of structures with domain $\omega$ in a given countable language $L$ is endowed with a standard Borel space structure $(\mathrm{K}_{\omega}^L, \mathcal{B})$. Every Borel subset of this space $(\mathrm{K}_{\omega}^L, \mathcal{B})$ is naturally endowed with the Borel structure induced by $(\mathrm{K}_{\omega}^L, \mathcal{B})$.
\end{fact}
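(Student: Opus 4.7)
The plan is to realize $\mathrm{K}_{\omega}^L$ as a Polish space by identifying each $L$-structure on $\omega$ with its tuple of symbol interpretations, and then read off the standard Borel structure from the Borel $\sigma$-algebra of that Polish space. Concretely, write the countable language $L$ as a disjoint union of relation symbols $\{R_i : i \in I\}$ of arities $n_i \geq 1$, function symbols $\{f_j : j \in J\}$ of arities $m_j \geq 1$, and constant symbols $\{c_k : k \in K\}$ (with $I, J, K$ at most countable). An $L$-structure with domain $\omega$ is determined uniquely by the choice, for each $i, j, k$, of a subset $R_i^M \subseteq \omega^{n_i}$, a function $f_j^M \colon \omega^{m_j} \to \omega$, and an element $c_k^M \in \omega$. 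This yields a natural bijection between $\mathrm{K}_{\omega}^L$ and the product space
$$X_L \;:=\; \prod_{i \in I} 2^{\omega^{n_i}} \,\times\, \prod_{j \in J} \omega^{\omega^{m_j}} \,\times\, \prod_{k \in K} \omega.$$

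Next, I would note that each factor above is a Polish space: $2^{\omega^{n_i}}$ is homeomorphic to Cantor space, $\omega^{\omega^{m_j}}$ is homeomorphic to Baire space, and $\omega$ carries the discrete Polish topology. Since countable products of Polish spaces are Polish, $X_L$ is Polish in the product topology, and hence $(X_L, \mathcal{B}(X_L))$ is a standard Borel space. Transporting this structure along the bijection above equips $\mathrm{K}_{\omega}^L$ with a canonical standard Borel structure $(\mathrm{K}_{\omega}^L, \mathcal{B})$. It is worth emphasizing that the natural ``quantifier-free'' subbasis — sets of the form $\{M : M \models \varphi(\bar{a})\}$ for atomic or negated-atomic $\varphi$ and $\bar{a} \in \omega^{<\omega}$ — generates exactly this Borel structure, so the assignment is intrinsic and does not depend on the enumeration chosen for $L$.

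For the second assertion, I would invoke the classical theorem of Kuratowski that if $(X, \mathcal{B})$ is a standard Borel space and $B \subseteq X$ is Borel, then $(B, \{A \cap B : A \in \mathcal{B}\})$ is again a standard Borel space: indeed, any Borel subset of a Polish space admits a finer Polish topology with the same Borel sets. Applying this to $B \subseteq \mathrm{K}_{\omega}^L$ gives the required induced standard Borel structure.

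There is really no main obstacle here, since both halves of the statement reduce to textbook descriptive set theory; the only mild subtleties are the trivial edge cases (empty $L$, or nullary function/relation symbols, handled by reading $\omega^0$ as a one-point set) and the remark that the topology on $\mathrm{K}_{\omega}^L$ depends on the presentation of $L$ but the resulting Borel $\sigma$-algebra does not.
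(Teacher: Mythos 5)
Your argument is correct and is exactly the standard construction that the paper takes for granted (the Fact is stated without proof, as background from Friedman--Stanley): realize $\mathrm{K}_{\omega}^L$ as the Polish product of the interpretation spaces of the symbols, and quote the classical fact that a Borel subset of a standard Borel space is itself standard Borel. Nothing to add.
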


	For example, if take $L = \{ e, \cdot, ()^{-1} \}$, and we let $K'$ to be one of the following:
	\begin{enumerate}[(a)]
	\item the set of elements of $\mathrm{K}_{\omega}^L$ which are groups;
	\item the set of elements of $\mathrm{K}_{\omega}^L$ which are abelian groups;
	\item the set of elements of $\mathrm{K}_{\omega}^L$ which are torsion-free abelian groups;
	\item the set of elements of $\mathrm{K}_{\omega}^L$ which are $n$-nilpotent groups, for some $n < \omega$;
	\end{enumerate}
then we have that $\mathrm{K}'$ is a Borel subset of $(\mathrm{K}_{\omega}^L, \mathcal{B})$, and so Fact~\ref{borel_structure} applies.

Thus, given a class $K'$ as in Fact~\ref{borel_structure}, we can consider $K'$ as a standard Borel space, and so we can analyze the complexity of certain subsets of this space or of certain relations on it (i.e., subsets of $K' \times K'$ with the product Borel space structure). 
%
%
	Further, this technology allows us to compare pairs of classes of structures or, in another direction, pairs of relations defined on pairs of classes of structures.
	
	\begin{definition}\label{Borel_reducibility} Let $X_1$ and $X_2$ be two standard Borel spaces, and let also $Y_1 \subseteq X_1$ and $Y_2 \subseteq X_2$. We say that $Y_1$ is reducible to $Y_2$, denoted as $Y_1 \leq_R Y_2$, when there is a Borel map $\mathbf{B}: X_1 \rightarrow X_2$ such that for every $x \in X_1$ we have:
	$$x \in Y_1 \Leftrightarrow \mathbf{B}(x) \in Y_2.$$
\end{definition}

	Notice that Definition~\ref{Borel_reducibility} covers in particular the case $X_1 = K' \times K'$ for $K'$ as in Fact~\ref{borel_structure}, and so for example $Y_1$ could be the isomorphism relation on $K'$. 
	Also, given a Borel space $X$, we can ask if there are subsets of $X$ which are $\leq_R$-maxima with respect to a fixed family of subsets of an arbitrary Borel space (e.g., Borel sets, analytic sets, co-analytic sets, etc). In particular we can define:
	
	\begin{definition}\label{complete_analytic} Let $X_1$ be a Borel space and $Y_1 \subseteq X_1$. We say that $Y_1$ is {\em complete analytic} (resp. complete co-analytic) if for every Borel space $X_2$ and analytic subset (resp. co-analytic subset) $Y_2$ of $X_2$ we have that $Y_2 \leq_R Y_1$. 
\end{definition}


	We now introduce the notion of Borel reducibility among equivalence relations.
	
	\begin{definition}\label{def_Borel_red} Let $X_1$ and $X_2$ be two standard Borel spaces, and let also $E_1$ be an equivalence relation defined on $X_1$ and $E_2$ be an equivalence relation defined on $X_2$. We say that $E_1$ is Borel reducible to $E_2$, denoted as $E_1 \leq_B E_2$, when there is a Borel map $\mathbf{B}: X_1 \rightarrow X_2$ such that for every $x, y \in X_1$ we have:
	$$x E_1 y \; \Leftrightarrow \mathbf{B}(x) E_2 \mathbf{B}(y).$$
\end{definition} 

	\begin{remark} Notice that in the context of Definitions~\ref{Borel_reducibility} and \ref{def_Borel_red}, $E_1 \leq_R E_2$ and $E_1 \leq_B E_2$ have two different meaning, as in the first case the witnessing Borel function has domain $X \times X$, while in the second case it has domain~$X$. Furthermore, notice that $E_1 \leq_B E_2$ implies  $E_1 \leq_R E_2$ (but the converse need not hold, see \ref{complete_implies_analytic}).
\end{remark}

	We now define {\em Borel completeness}, the notion at the heart of our paper.
	
	\begin{definition} Let $\mathrm{K}_1$ be a Borel class of structures with domain $\omega$ and let $\cong_1$ be the isomorphism relation on $\mathrm{K}_1$. We say that $\mathrm{K}_1$ is {\em Borel complete} (or, in more modern terminology, $\cong_1$ is $\mrm{S}_\infty$-complete) if for every Borel class $\mathrm{K}_2$ of structures with domain $\omega$ there is a Borel map $\mathbf{B}: \mathrm{K}_2 \rightarrow \mathrm{K}_1$ such that for every $A, B \in \mathrm{K}_2$:
$$A \cong B \; \Leftrightarrow \mathbf{B}(A) \cong_1 \mathbf{B}(B),$$
that is, the isomorphism relation on the space $\mathrm{K}_2$ is Borel reducible (in the sense of Definition~\ref{def_Borel_red}) to the the isomorphism relation on the space $\mathrm{K}_1$.
\end{definition}

	The following fact will be relevant for our subsequent historical account.

	\begin{fact}[\cite{friedman_and_stanley}]\label{complete_implies_analytic} Let $\mathrm{K}$ be a Borel class of structures with domain $\omega$. If $\mathrm{K}$ is Borel complete, then its isomorphism relation is a complete analytic subset of $\mrm{K} \times \mrm{K}$, but the converse need not hold, as for example abelian $p$-groups with domain $\omega$ have complete analytic isomorphism relation but they are {\em not} a Borel complete space.
\end{fact}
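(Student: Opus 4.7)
The statement combines two claims of rather different character: the implication ``Borel complete $\Rightarrow$ isomorphism relation is complete analytic'', and the assertion that the class of abelian $p$-groups with domain $\omega$ witnesses the failure of the converse. I would handle them separately.

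For the implication, the easy direction is that $\cong_\mrm{K}$ is always analytic as a subset of $\mrm{K} \times \mrm{K}$: the statement ``$A \cong B$'' unwinds to $\exists f \in \omega^\omega\,(f \text{ is a bijection preserving every symbol of } L)$, a $\Sigma^1_1$ formula over a standard Borel space. For the completeness half, the plan is to route through a benchmark. One first establishes, by a direct and classical construction, the existence of a Borel class $\mrm{K}_0$ whose isomorphism $\cong_0$ is itself complete analytic as a subset of $\mrm{K}_0 \times \mrm{K}_0$. A convenient choice is the class of countable symmetric irreflexive graphs on $\omega$: given an analytic $Y \subseteq Z$, represent $Y = \{z : T_z \text{ is ill-founded}\}$ via a Borel assignment of trees on $\omega$, then construct a Borel pair $(G_z^1, G_z^2)$ of graphs, rigid up to a potential symmetry that exists precisely when $T_z$ admits an infinite branch (a common rigid scaffold with distinguishable gadgets attached at each node of $T_z$). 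Once this benchmark is in hand, the hypothesis that $\mrm{K}$ is Borel complete supplies a Borel $\varphi : \mrm{K}_0 \to \mrm{K}$ with $A \cong_0 B \Leftrightarrow \varphi(A) \cong_{\mrm{K}} \varphi(B)$; by the Remark immediately preceding, this witnesses $\cong_0 \leq_R \cong_\mrm{K}$ on the product spaces, and composing with the reduction of $Y$ to $\cong_0$ yields the desired Borel reduction of $Y$ to $\cong_\mrm{K}$.

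For the converse failing, both directions must be verified for abelian $p$-groups. The non-Borel-completeness is the cleaner side: by Ulm's theorem every countable reduced abelian $p$-group is determined up to isomorphism by its Ulm sequence, a sequence in $\omega \cup \{\infty\}$ indexed by a countable ordinal; combined with the classification of the divisible part by a single cardinal invariant, this gives a Borel reduction of $\cong$ into an equivalence relation on ``countable sequences of Ulm data''. The Friedman--Stanley analysis of iterated-Friedman-jumps places such relations strictly below $\mathrm{S}_\infty$-complete in the $\leq_B$ hierarchy; in particular graph isomorphism does \emph{not} Borel reduce into $\cong$ on abelian $p$-groups. For the positive side (complete analyticity as a subset), the idea is that while the Ulm invariants classify, reading them off is inherently non-Borel: the Ulm length ranges over all countable ordinals, so agreement of invariants hinges on a well-foundedness verification. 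I would carry this out by building a Borel assignment $T \mapsto G(T)$ of a countable abelian $p$-group to a tree $T$ on $\omega$, using direct sums of cyclic $p$-groups of various heights layered according to $T$, so that for a fixed target $G_*$ the isomorphism $G(T) \cong G_*$ encodes ill-foundedness of $T$; this is a Borel reduction of a $\Sigma^1_1$-complete set to $\cong$ on abelian $p$-groups.

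The main obstacle is the second part, and specifically its positive side: one must design the assignment $T \mapsto G(T)$ so that the Ulm-type data of $G(T)$ encode the structural property of $T$ being reduced, without accidentally collapsing non-isomorphic trees to isomorphic groups. The non-Borel-completeness is almost immediate from Ulm, and the first implication, once the benchmark class with complete analytic isomorphism is in place, is a formal composition of Borel reductions.
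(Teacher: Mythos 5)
The paper does not actually prove this statement: it is quoted as a Fact and attributed to Friedman--Stanley, so there is no in-paper argument to compare against. Your plan correctly reconstructs the standard proofs: the first implication is a formal composition of $\leq_R$-reductions once one has a benchmark Borel class (graphs) whose isomorphism relation is complete analytic as a subset of the product space, and the two halves of the $p$-group assertion are precisely Friedman and Stanley's theorems (the tree-to-$p$-group construction for complete analyticity, and the non-completeness of torsion abelian groups). The one place you undersell the difficulty is the claim that non-Borel-completeness is ``almost immediate from Ulm'': Ulm's theorem only reduces $p$-group isomorphism to equality of Ulm sequences, i.e.\ roughly to the increasing union over all $\alpha<\omega_1$ of the iterated Friedman--Stanley jumps, and the fact that graph isomorphism does not Borel reduce to that relation is itself a substantial theorem of Friedman--Stanley resting on a Borel diagonalization/boundedness argument --- it does not follow from the strictness of the jump hierarchy alone, since the target is not any single level of that hierarchy. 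Provided you cite that theorem rather than treat it as routine, the plan is sound.
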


	We now have all the ingredients necessary to be able to understand the problems that we solve in this paper and to introduce the state of the art concerning them. But first a useful piece of notation which we will use throughout the paper.
	
	\begin{notation}\label{graph_not} \begin{enumerate}[(1)]
	\item We denote by $\mathrm{Graph}$ the class of graphs.
	\item We denote by $\mathrm{Gp}$ the class of groups.
	\item We denote by $\mathrm{AB}$ the class of abelian groups.
	\item We denote by $\mathrm{TFAB}$ the class of torsion-free abelian groups.
	\item\label{graph_not_5} Given a class $\mathrm{K}$ we denote by $\mathrm{K}_{\omega}$ the set of structures in $\mathrm{K}$ with domain $\omega$.
	\end{enumerate}
	\end{notation}
	
	\begin{convention} To simplify statements, we use the following convention: when we say that a class $\mathrm{K}$ of countable structures  is Borel complete we mean that $\mathrm{K}_\omega$ is Borel complete. Similarly, when we say that a class $\mathrm{K}$ of countable groups is complete co-analytic we mean that $\mathrm{K}_\omega$ is a complete co-analytic subset of $\mathrm{Gp}_\omega$. Finally, when we say that the isomorphism relation on a class of countable groups is analytic, we mean that restriction of the isomorphism relation on $\mathrm{K}$ to $\mathrm{K}_\omega \times \mathrm{K}_\omega$ is an analytic subset of the Borel space $\mathrm{Gp}_\omega \times \mathrm{Gp}_\omega$ (as a product space).
\end{convention}
	
	In \cite{friedman_and_stanley}, together with the general notions just defined, the authors studied some Borel complexity problems for specific classes of countable structures of interest. Among other things they showed (we mention only the results relevant to us):
	\begin{enumerate}[(i)]
	\item countable graphs, linear orders and trees are Borel complete;
	\item torsion abelian groups have complete analytic $\cong$ but are {\em not} Borel complete;
	\item nilpotent groups of class $2$ and exponent $p$ ($p$ a prime) are Borel complete\footnote{As already mentioned in \cite{friedman_and_stanley}, this result is actually a straightforward adaptation of a model theoretic construction due to Mekler \cite{mekler}.};
	\item the isomorphism relation on finite rank torsion-free abelian groups is Borel.
\end{enumerate}	 

	In \cite{friedman_and_stanley} Friedman and Stanley state explicitly:
	
\begin{quote} There is, alas, a missing piece to the puzzle, namely our conjecture that torsion-free abelian groups are complete. [...] We have not even been able to show that the isomorphism relation on torsion-free abelian groups is complete analytic, nor, in another direction, that the class of all abelian groups is Borel complete. We consider these problems to be among the most important in the subject.
\end{quote}

	The challenge was taken by several mathematicians. The first to work on this problem was Hjorth, which in \cite{hjorth} proved that any Borel isomorphism relation is Borel reducible (in the sense of Definition~\ref{def_Borel_red}) to the isomorphism relation on countable torsion-free abelian groups, and that in particular the isomorphism relation on $\mrm{TFAB}_\omega$ is not Borel (as there is no such Borel equivalence relation), leaving though open the question whether $\mrm{TFAB}_\omega$ is a Borel complete class, or even whether the isomorphism relation on $\mrm{TFAB}_\omega$ is complete analytic (cf. Def. \ref{complete_analytic} and Fact~\ref{complete_implies_analytic}).	

	 The problem resisted further attempts of the time and the interest moved to another very interesting problem on torsion-free abelian groups: for $1 \leq n < m < \omega$, is the isomorphism relation $\cong_n$ on torsion-free abelian groups of rank $n$ strictly less complex (in the sense of Definition~\ref{def_Borel_red}) than the isomorphism relation on torsion-free abelian groups of rank $m$? As mentioned above, the isomorphism relation on torsion-free abelian groups of finite rank is Borel while, as just mentioned, the isomorphim relation on countable torsion-free abelian groups is not, and so the two problems are quite different, but obviously related. Also this problem proved to be very challenging, until Thomas finally gave a positive solution to the problem, in a series of two fundamental papers \cite{thomas_ACTA, thomas_JAMS}, proving in particular that, for every $n < \omega$, $\cong_n$ is not universal among countable Borel equivalence relations. 
	
	The fundamental work of Thomas thus resolved completely the case of torsion-free abelian groups of finite rank, leaving open the problem for countable torsion-free abelian groups of arbitrary rank, i.e. the problem referred to as ``among the most important in the subject'' in \cite{friedman_and_stanley}. The problem remained ``dormant'' for various years (at the best of our knowledge), until Downey and Montalb{\'a}n \cite{dow_mon} made some important progress showing that the isomorphism relation on countable torsion-free abelian groups is complete analytic, a necessary but not sufficient condition for Borel completeness, as recalled in Fact~\ref{complete_implies_analytic}. This was of course possible evidence that the isomorphism relation was indeed Borel complete, as conjectured in \cite{friedman_and_stanley}. Despite this advancement, the problem of Borel completeness of countable torsion-free abelian groups resisted for other 12 years, until this day, when we prove:
	 

\begin{maintheorem}\label{main_theorem} The space $\mathrm{TFAB}_\omega$ is Borel complete, in fact there exists a continous map $\mathbf{B}: \mathrm{Graph}_{\omega} \rightarrow \mathrm{TFAB}_\omega$ s.t. for every $H_1, H_2, \in \mathrm{Graph}_{\omega}$ we have:
$$H_1 \cong H_2 \text{ if and only if } \mathbf{B}(H_1) \cong \mathbf{B}(H_2).$$
\end{maintheorem}


	
	The techniques employed in the proof of our Main Theorem lead us to (and at the same time were inspired by) classification questions of ``rigid'' countable abelian groups. One of the most important notions of rigidity in abelian group theory is the notion of endorigidity, where we say that $G \in \mrm{AB}$ is endorigid if the only endomorphisms of $G$ are multiplication by an integer. The analysis of endorigid abelian groups is an old topic in abelian group theory, famous in this respect is the result of the second author \cite{sh44} that for every infinite cardinal $\lambda$ there is an endorigid torsion-free abelian group of cardinality $\lambda$. We prove in Thereom~\ref{co-hopfian_TFAB_th} below that the classification of the countable endorigid $\mrm{TFAB}$ is an highly untractable problem.
	

\begin{theorem}\label{co-hopfian_TFAB_th} 
The set of endorigid torsion-free abelian groups is a complete co-analytic subset of the Borel space space $\mathrm{TFAB}_\omega$. In fact, more strongly, there is a Borel function $\mathbf{F}$ from the set of tree with domain $\omega$ into $\mrm{TFAB}_\omega$ such that:
\begin{enumerate}[(i)]
	\item if $T$ is well-founded, then $\mathbf{F}(T)$ is endorigid;
	\item if $T$ is not well-founded, then $\mathbf{F}(T)$ has a $1$-to-$1$ $f \in \mrm{End}(G)$ which is not multiplication by an integer and such that $G/f[G]$ is not torsion.
	\end{enumerate}
\end{theorem}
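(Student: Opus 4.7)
The plan is to reduce the classical complete co-analytic problem of well-foundedness of trees on $\omega$ to endorigidity of torsion-free abelian groups. Since $\mathrm{WF} \subseteq \mathrm{Tr}_\omega$ is a standard complete co-analytic set, it suffices to produce a Borel $\mathbf{F}$ with the two stated properties: together they exhibit a Borel reduction from $\mathrm{WF}$ to the set of endorigid members of $\mathrm{TFAB}_\omega$, and $\mathrm{End}$-rigidity is manifestly co-analytic. The conclusion about completeness then follows, and (ii) automatically witnesses non-rigidity of $\mathbf{F}(T)$ when $T \notin \mathrm{WF}$, which is what is needed for the reduction to work in both directions.

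For the construction, I would take the Main Theorem's group $\mathbf{B}(H)$ as a template and feed into it a graph $H = H(T)$ canonically built from $T$ (e.g.\ the Hasse diagram of $T$, decorated with rigid markers for root, level, and immediate successor so that automorphisms of $H(T)$ correspond to tree automorphisms, and more generally partial endomorphisms of $\mathbf{B}(H(T))$ track partial self-embeddings of $T$). Concretely, one assigns a distinct prime $p_t$ to each node $t \in T$ and adjoins generators $x_t$ with $p_t$-divisibility relations tied to the parent-child structure, so that any $\mathbb{Z}$-linear endomorphism $f$ of $\mathbf{F}(T)$ which is not a scalar must send $x_t \mapsto$ some element lying in the "cone" below $x_t$, forcing a selector $\sigma : T \to T$ that strictly decreases the rank along the tree.

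For direction (ii), given $\eta \in [T]$ one writes down an explicit $f$ by sending $x_{\eta \restriction n} \mapsto x_{\eta \restriction (n+1)}$ (suitably scaled) and extending by the divisibility relations to the rest of $\mathbf{F}(T)$. Because the $x_{\eta \restriction n}$ sit in independent "columns" and the assigned primes differ from those of the target column, this $f$ is $1$-to-$1$, it is not multiplication by any integer, and $x_{\eta \restriction 0}$ has non-torsion class in $\mathbf{F}(T)/f[\mathbf{F}(T)]$. For direction (i), assume $T$ is well-founded and $f \in \mathrm{End}(\mathbf{F}(T))$ is not a scalar; the encoding forces $f$ to act on the generators according to a well-defined $\sigma$ that strictly decreases tree-rank wherever $f$ differs from a scalar, contradicting well-foundedness.

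The main obstacle will be calibrating the divisibility and glue relations in $\mathbf{F}(T)$ precisely enough that the "rigidity rigidification" in Step (i) truly goes through: one must rule out exotic endomorphisms that mix columns in ways not captured by a single selector $\sigma$, and simultaneously guarantee that the shift map from an infinite branch survives as a genuine $1$-to-$1$ endomorphism with non-torsion cokernel. This is where the structural features of the Main Theorem's construction (in particular, the use of distinct primes on independent parts of the group and the localization arguments that identify generators up to scalar) will do the heavy lifting, and the tree-decorated version must be designed carefully so that those same arguments produce a rank-decreasing selector rather than merely an isomorphism.
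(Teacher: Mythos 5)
Your global frame is right and matches the paper's: well-foundedness of trees is complete co-analytic, endorigidity is visibly co-analytic, and a Borel $\mathbf{F}$ with properties (i)--(ii) is exactly a reduction; moreover your witness for (ii) -- the shift along an infinite branch, which misses $X_0$ and hence has non-torsion cokernel -- is precisely what the paper does in Theorem~\ref{main_th_Sec1}(2). The gap is in the construction and in part (i). First, the design you describe (one prime $p_t$ per node, generators $x_t$ with $p_t$-divisibility tied only to the parent-child structure, ``independent columns'') cannot be endorigid: if two cones of $T$ carry no shared divisibility constraints, then multiplying one cone by $2$ and fixing the other is a non-scalar endomorphism. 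The paper's construction avoids this by attaching a distinct prime $p_a$ to \emph{every} nonzero element $a$ of $G_0$ (in particular to sums such as $x_0+x_1+x_2$ that straddle components, subject to the coprimality condition $(\oplus)$ of Definition~\ref{def_G1_cohopfian}), and by declaring $a$ divisible by $p_b^\infty$ exactly when $b\leq_* a$ in the tree order on $G_0^+$; it is these ``cross-component'' primes that force a single scalar in Case~3 of the proof of Theorem~\ref{main_th_Sec1}(3). Second, reusing the Main Theorem's $\mathbf{B}$ on a decorated graph $H(T)$ does not give you control of endomorphisms: the entire analysis of Section~\ref{sub_ana_iso} is about isomorphisms and repeatedly invokes $\pi^{-1}$ (e.g.\ Conclusion~\ref{concl_sequence_rationals_Borel}(2)(3)), and $\mathbf{B}(H)$ is deliberately saturated with partial automorphisms $\hat f_{\bar g}$ indexed by all finite partial automorphisms of the ambient homogeneous structure -- the opposite of what rigidity requires. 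The paper's Section~\ref{sec_cohop} is a separate construction in which only the maps $f_t$, $t\in T$, are built in.

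Third, the step ``a non-scalar endomorphism forces a rank-decreasing selector $\sigma$'' is the entire content of (i) and is not argued. To make it work one needs: (a) the support lemma $G_{(1,p_a)}=\langle b\in G_0^+ : a\leq_* b\rangle^*_{G_1}$ (Lemma~\ref{lemma_pre_main_th}(1)), whose proof requires an explicit construction of homomorphisms into $\mathbb{Q}_p$ killing the cone above $a$ but not a putative extra element; (b) a case division on the set $Y$ of $E_1$-classes moved non-scalarly -- your selector picture only addresses the infinite case, while the paper must separately show that ``finitely many but at least one class moved'' is impossible (Case~2) and that ``no class moved'' forces a single integer scalar (Case~3, which needs Claim~\ref{q*_integer}); and (c) even in the infinite case, at each node the decomposition of $\pi(x^+_r)$ produces finitely many candidate images $\bar t^r_j$, and one must cohere these choices across all $r$ into a single infinite branch -- the paper does this with a non-principal ultrafilter and the refinement properties $(*_{10})$--$(*_{21})$. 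None of this is routine, and your own ``main obstacle'' paragraph correctly locates the difficulty but does not supply a mechanism for overcoming it.
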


	In \cite{2005} we extend the ideas behind Theorem~\ref{co-hopfian_TFAB_th} to a systematic investigation of several classification problems for various rigidity conditions on abelian and nilpotent groups from the perspective of descriptive set theory of countable structures. In another direction, in \cite{2035} we study the question of existence of uncountable \mbox{(co-)Hopfian} abelian groups, this work was later continued by the second author et al. in the preprint  \cite{goldshani}, which settles some questions left open in \cite{2035}.
	
\medskip
\noindent 

	We conclude with a few words on the history of this article. At the end of the refereeing process, the referee indicated some points which needed correction in the original version of this paper. Around the same time, Laskowski and Ulrich indicated another point which needed correction in our original submission. The referee also asked to change the presentation of our Main Theorem and to simply its proof, in particular separating the algebra from the combinatorics (division which is reflected by the current division in Sections~\ref{S3} and~\ref{Borel_complete_section}). Here all the points raised there are addressed. We thank the anonymous referee, Laskowski and Ulrich. Meanwhile, Laskowski and Ulrich have found another proof of our Main Theorem, see \cite{las_new, las_new+}.

\section{Notations and Preliminaries}

	For the readers of various backgrounds we try to make the paper self-contained.

\subsection{General notations}

	\begin{definition}\label{def_general_notation}
	\begin{enumerate}[(1)]
	\item Given a set $X$ we write $Y \subseteq_\omega X$ for $\emptyset \neq Y \subseteq X$ and $|Y| < \aleph_0$.
	\item Given a set $X$ and $\bar{x}, \bar{y} \in X^{< \omega}$ we write $\bar{y} \triangleleft \bar{x}$ to mean that $\mrm{lg}(\bar{y}) < \mrm{lg}(\bar{x})$ and $\bar{x} \restriction \mrm{lg}(\bar{y}) = \bar{y}$, where $\bar{x}$ is naturally considered as a function ${\mrm{lg}(\bar{x})} \rightarrow X$.
	\item Given a partial function $f: M \rightarrow M$, we denote by $\mrm{dom}(f)$ and $\mrm{ran}(f)$ the domain and the range of $f$, respectively.
	\item For $\bar{a} \in B^n$ we write $\bar{x} \subseteq B$ to mean that $\mrm{ran}(\bar{x}) \subseteq B$, where, as usual, $\bar{a}$ is considered as a function $\{0, ..., n-1 \} \rightarrow B$.
	\item Given a sequence $\bar{f} = (f_i : i \in I)$ we write $f \in \bar{f}$ to mean that there exists $j \in I$ such that $f = f_j$.
	\end{enumerate}
\end{definition}

\subsection{Groups}

\begin{notation} Let $G$ and $H$ be groups.
\begin{enumerate}[(1)]
	\item $H \leq G$ means that $H$ is a subgroup of $G$.
	\item We let $G^+ = G \setminus \{ e_G \}$, where $e_G$ is the neutral element of $G$.
	\item If $G$ is abelian we might denote the neutral element $e_G$ simply as $0_G = 0$.
\end{enumerate}
\end{notation}

	\begin{definition} Let $H \leq G$ be groups, we say that $H$ is pure in $G$, denoted by $H \leq_* G$, when if $h \in H$, $0 < n < \omega$, $g \in G$ and (in additive notation) $G \models ng = h$, then there is $h' \in H$ s.t. $H \models nh' = h$. Given $S \subseteq G$ we denote by $\langle S \rangle^*_S$ the pure subgroup generated by $S$ (the intersection of all the \mbox{pure subgroups of $G$ containing~$S$).}
\end{definition}

	\begin{observation}\label{obs_pure_TFAB} $H \leq_* G \in \mrm{TFAB}$, $h \in H$, $0 < n < \omega$, $G \models ng = h \Rightarrow g \in H$.
\end{observation}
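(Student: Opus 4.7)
The plan is to combine the definition of purity with the torsion-freeness of $G$ in a single short argument. The hypothesis provides $h \in H$, $0 < n < \omega$, and $g \in G$ with $G \models ng = h$. Since $H \leq_* G$ and $h \in H$, the definition of purity immediately supplies some $h' \in H$ with $H \models nh' = h$, and in particular $G \models nh' = h$.

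Now $h'$ and $g$ are both solutions of the equation $nx = h$ in $G$, so $G \models n(g - h') = 0$. Because $G$ is torsion-free and $n > 0$, the only element annihilated by $n$ is $0_G$, hence $g - h' = 0_G$, i.e.\ $g = h' \in H$. This is exactly what we wanted.

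The main (and only) subtlety worth flagging is that purity as stated does not a priori assert uniqueness of the divisor $h'$ inside $H$; the argument proceeds by first producing any such $h'$ from purity and only afterwards identifying it with $g$ via the torsion-free cancellation step. Since both ingredients are pure definition-chasing, I do not anticipate any real obstacle.
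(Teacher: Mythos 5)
Your argument is correct and is the standard one: purity yields $h' \in H$ with $nh' = h$, and torsion-freeness forces $g = h'$. The paper offers no proof for this observation (it is treated as immediate), and your two-step derivation is exactly the intended justification.
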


	\begin{observation}\label{generalG1p_remark} Let $G \in \mathrm{TFAB}$, $p$ a prime and let:
	$$G_{p} = \{ a \in G : a \text{ is divisible by $p^m$, for every $0 < m < \omega$} \},$$
then $G_{p}$ is a pure subgroup of $G$.
\end{observation}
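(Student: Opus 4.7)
The plan is to verify two things: that $G_p$ is a subgroup, and that it is pure in $G$. Since $G$ is torsion-free, purity of $G_p$ reduces by Observation~\ref{obs_pure_TFAB} to the following implication: if $h \in G_p$, $0 < n < \omega$, and $g \in G$ satisfy $ng = h$, then $g \in G_p$. Closure under subtraction is immediate — for any $m > 0$, writing $a = p^m a_m$ and $b = p^m b_m$ gives $a - b = p^m(a_m - b_m) \in p^m G$, and $0 \in G_p$ trivially — so I will spend the bulk of the argument on purity.

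For purity, given $h \in G_p$, $n > 0$, and $g \in G$ with $ng = h$, I would factor $n = p^a n'$ with $\gcd(n', p) = 1$ and fix an arbitrary $m > 0$. The idea is to push the $p^a$ in the coefficient $n$ into the $p$-divisibility of $h$: write $h = p^{m+a} h_m$ for some $h_m \in G$ (possible since $h \in G_p$), so that $p^a n' g = p^{m+a} h_m$. Using torsion-freeness of $G$, cancel $p^a$ to obtain $n' g = p^m h_m$. A Bezout identity $u n' + v p^m = 1$ then yields $g = u n' g + v p^m g = p^m(u h_m + v g)$, exhibiting $g$ as divisible by $p^m$. Since $m$ was arbitrary, $g \in G_p$, and we are done.

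The only delicate point — and hence the one to flag as the potential pitfall — is the cancellation of $p^a$, which uses torsion-freeness in an essential way; without it, the statement simply fails (e.g.\ in $\mathbb{Z}/p^2\mathbb{Z}$ with $n = p$ and $h = 0$). Everything else is formal manipulation with Bezout and should be dispatched in a line or two.
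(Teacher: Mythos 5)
Your argument is correct and complete. The paper itself does not prove this observation at all --- it simply records that the fact is well-known and points to the discussion in Harrison --- so there is no proof in the text to compare against; your write-up supplies the missing elementary argument. The key steps all check out: the subgroup verification, the factorization $n = p^a n'$, the cancellation of $p^a$ using torsion-freeness (which, as you rightly flag, is where the hypothesis $G \in \mathrm{TFAB}$ is genuinely used), and the Bezout identity $un' + vp^m = 1$ producing $g = p^m(uh_m + vg)$. One pedantic remark: Observation~\ref{obs_pure_TFAB} as stated gives the implication ``purity $\Rightarrow$ ($ng = h \in H$ implies $g \in H$)'', whereas what you need is the converse reduction; but that converse is immediate from the definition of $\leq_*$ (take $h' = g$), so nothing is lost.
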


	\begin{proof} This is well-known, see e.g. the discussion in \cite[pg. 386-387]{harrison}.
\end{proof}

\begin{definition}\label{Qp} Let $p$ be a prime. We let:
$$\mathbb{Q}_p = \{\frac{m_1}{m_2} : m_1 \in \mathbb{Z}, m_2 \in \mathbb{Z}^+, p \text{ and $m_2$ are coprime}\}.$$
\end{definition}


\subsection{Trees}
%
%
%

	\begin{definition} Given an $L$-structure $M$ by a partial automorphism of $M$ we mean a partial function $f: M \rightarrow M$ such that $f: \langle \mrm{dom}(f) \rangle_M \cong \langle \mrm{ran}(f) \rangle_M$.
\end{definition}

	In Section~\ref{sec_cohop} we shall use the following notions.

	\begin{definition}\label{def_trees} Let $(T, <_T)$ be a strict partial order.
	\begin{enumerate}[(1)]
	\item $(T, <_T)$ is a {\em tree} when, for all $t \in T$, $\{s \in T : s <_T t\}$ is well-ordered by the relation $<_T$. Notice that according to our definition a tree $(T, <_T)$ might have more than one root, i.e. more than one $<_T$-minimal element. We say that the tree $(T, <_T)$ is rooted when it has only one $<_T$-minimal element (its root).
	\item A {\em branch} of the tree $(T, <_T)$ is a maximal chain of the partial order $(T, <_T)$.
	\item A tree $(T, <_T)$ is said to be {\em well-founded} if it has only finite branches.
	\item Given a tree $(T, <_T)$ and $t \in T$ we let the level of $t$ in $(T, <_T)$, denoted as $\mrm{lev}(t)$, to be the order type of $\{s \in T : s <_T t\}$ (recall item (1)).
	\end{enumerate}
\end{definition}

\begin{remark} Concerning Def.~\ref{def_trees}(4), we will only consider trees $(T, <_T)$ such that, for every $t \in T$, $\{s \in T : s <_T t\}$ is finite, so for us $\mrm{lev}(t) \in \omega$.
\end{remark}

\section{The Combinatorial Frame}\label{S3}


	The isomorphism problem for countable models of the theory of two equivalence relations is known to be at least as complex as the isomorphism problem for any other Borel class of countable structures. In what follows we will reduce this problem to the isomorphism problem for countable $\mrm{TFAB}$'s. Our reduction will consist of an elaborated coding of finite partial automorphisms $g$'s of models with two equivalence relations into partial automorphisms $f_g$'s of $\mrm{TFAB}$'s. For technical reasons we will actually work with finite sequences $\bar{g}$ of finite partial automorphisms, and to avoid the troublesome case $g = g^{-1}$ we will actually work with objects of the form $(\bar{g}, \iota)$ with $\iota \in \{ 0, 1\}$. Finally, also for technical reasons, it will be useful to consider models of the theory of three equivalence \mbox{relation, with one of them being equality.}
	
\smallskip

The definition of $\mathfrak{m} \in \mathrm{K}^{\mrm{bo}}_2(M)$ which we will introduce in Definition~\ref{hyp_C1} is phrased just to construct a certain $G_M \in \mrm{TFAB}$ (see Definition~\ref{def_G012_borel}), for any relevant $M$, and in fact, as we will seee, it will suffice (and it will be very useful to do so) to construct such as $G_M$ just for $M$ the countable homogeneous universal model of the theory of two equivalence relations, cf. \ref{hyp_C0}(\ref{universal_model}). In this respect, below $X$ will serve as set of generators for $G_M$ (in the appropriate sense) and $a \in M$ will be coded as $X'_{\{ a \}} \subseteq X$ (in a certain sense). As the $f_{\bar{g}}$'s are finite partial automorphisms  related to the partial automorphisms in $\bar{g}$, it is natural to ask that if $\bar{g} = (g_0, ..., g_n)$, then $f_{\bar{g}}$ maps elements in $\mrm{dom}(f_{\bar{g}}) \cap X'_{\{ a \}}$ into elements in $X'_{\{g_n(a)\}}$. Almost all clauses in Definition~\ref{hyp_C1} right below are combinatorial, but, not surprisingly, one is more algebraic, namely clause (8). As it will be clear from reading Section~4 below, this clause is {\em crucial} in reconstructing an isomorphism between models of the theory of two equivalence relations from an isomorphism between $\mrm{TFAB}_\omega$'s.

	\begin{notation}\label{seq_not} For $Z$ a set and $0 < n < \omega$, we let $\mathrm{seq}_n(Z) = \{ \bar{x} \in Z^n : \bar{x} \text{ injective} \}$.
\end{notation}

	\begin{hypothesis}\label{hyp_C0} \begin{enumerate}[(1)]
	\item\label{hyp_C0_item1} $\mathbf{K}^{\mrm{eq}}$ is the class of models $M$ in a vocabulary $\{\mathfrak{E}_0, \mathfrak{E}_1, \mathfrak{E}_2\}$ such that each $\mathfrak{E}^M_i$ is an equivalence relation and $\mathfrak{E}^M_2$ is the equality relation. We use the symbol $\mathfrak{E}_i$ to avoid confusions, as the symbol $E_i$ appears in \ref{hyp_C1}.
	\item\label{universal_model} $M$ is the countable homogeneous universal model in $\mathbf{K}^{\mrm{eq}}$.
	\item\label{hyp_C0_item2} $\mathcal{G}$ is essentially the set of finite non-empty partial automorphisms $g$ of $M$ but for technical reasons\footnote{The reason is that we want to force that $g \neq g^{-1}$.} it is the set of objects $g = (\mathbf{h}_g, \iota_g)$ where:
	\begin{enumerate}[(A)]
	\item 
	\begin{enumerate}[(a)]
	\item $\mathbf{h}_g$ is a finite non-empty partial automorphism of $M$;
	\item $\iota_g \in \{0, 1\}$;
	\end{enumerate}
	\item for $g \in \mathcal{G}$ we let:
	\begin{enumerate}[(a)]
	\item $g^{-1} = (\mathbf{h}^{-1}_g, 1-\iota_g)$;
	\item for $a \in M$, $g(a) = \mathbf{h}_g(a)$;
	\item for $\mathcal{U} \subseteq M$, $g[\mathcal{U}] = \{ \mathbf{h}_g(a) : a \in \mathcal{U} \}$;
	\item $g_1 \subseteq g_2$ means $\mathbf{h}_{g_1} \subseteq \mathbf{h}_{g_2}$ and $\iota_{g_1} = \iota_{g_2}$;
	\item $g_1 \subsetneq g_2$ means $g_1 \subseteq g_2$ and $g_1 \neq g_2$;
	\item $\mrm{dom}(g) = \mrm{dom}(\mathbf{h}_{g})$ and $\mrm{ran}(g) = \mrm{ran}(\mathbf{h}_{g})$;
	\item for $\mathcal{U} \subseteq M$, $g \restriction \mathcal{U} = (\mathbf{h}_{g} \restriction \mathcal{U}, \iota_g)$.
\end{enumerate}
\end{enumerate}	 
	\item\label{G_*} For $m < \omega$, $\mathcal{G}_{*}^{m} = \{ (g_0, ..., g_{m-1}) \in \mathcal{G}^m: g_0 \subsetneq \cdots \subsetneq g_{m-1} \}$.
	\item\label{G_*_real} $\mathcal{G}_* = \bigcup \{\mathcal{G}^m_* : m < \omega \}$ (notice that the empty sequence belongs to $\mathcal{G}_*$).
	\end{enumerate}
\end{hypothesis}


	\begin{notation}\label{dom_barg_notation}
	\begin{enumerate}[(1)]
	\item We use $s, t, ...$ to denote finite non-empty subsets of $M$ and $\mathcal{U}, \mathcal{V}, ...$ to denote arbitrary subsets of $M$. Recall from \ref{def_general_notation} that $\subseteq_\omega$ means finite subset.
	\item For $A$ a set, we let $s \subseteq_1 A$ mean $s \subseteq A$ and $|s| = 1$.
	\item For $\bar{g} = (g_0, ..., g_{\mrm{lg}(\bar{g})-1}) \in \mathcal{G}_{*}^{\mrm{lg}(\bar{g})}$ and $s, t \subseteq_\omega M$, we let:
	\begin{enumerate}
	\item\label{barg_a} for $a, b \in M$, $\bar{g}(a) = b$ mean that $g_{\mathrm{lg}(\bar{g})-1}(a) = b$;
	\item $\bar{g}[s] = t$ mean that $g_{\mathrm{lg}(\bar{g})-1}[s] = t$;
	\item\label{item_domain} $\mathrm{dom}(\bar{g}) = \mathrm{dom}(g_{\mathrm{lg}(\bar{g})-1})$, and $\emptyset$ if $\mathrm{lg}(\bar{g}) = 0$;
	\item\label{item_range} $\mathrm{ran}(\bar{g}) = \mathrm{ran}(g_{\mathrm{lg}(\bar{g})-1})$, and $\emptyset$ if $\mathrm{lg}(\bar{g}) = 0$;
	\item $\bar{g}^{-1} = (g^{-1}_i : i < \mrm{lg}(\bar{g}))$;
	\item $\bar{g}((x_\ell : \ell < n)) = (\bar{g}(x_\ell): \ell < n)$.
\end{enumerate}
\end{enumerate}
\end{notation}

\begin{definition}\label{hyp_C1} In the context of Hyp.~\ref{hyp_C0}, let $\mathrm{K}^{\mrm{bo}}_2(M)$ be the class of objects (called {\em systems}) $\mathfrak{m}(M) = \mathfrak{m} = (X^\mathfrak{m}, \bar{X}^\mathfrak{m}, \bar{f}^\mathfrak{m}, \bar{E}^\mathfrak{m}) = (X, \bar{X}, \bar{f}, \bar{E})$ such that:
	\begin{enumerate}[(1)]
	\item\label{hyp_C1_item1} $X$ is an infinite countable set and $X \subseteq \omega$;
	\item
	\begin{enumerate}
	\item\label{item_partition_a} $(X'_s : s \subseteq_{1} M)$ is a partition of $X$ into infinite sets;
	\item\label{item_partition_b}  for $s \subseteq_{\omega} M$, let $X_s = \bigcup _{t \subseteq_1 s} X'_t$;
	\item $\bar{X} = (X_s : s \subseteq_{\omega} M)$ and so $s \subseteq t \subseteq_{\omega} M$ implies $X_s \subseteq X_t$;
\end{enumerate}	 
	\item\label{def_XU} for $\mathcal{U} \subseteq M$ let $X_{\mathcal{U}} = \bigcup\{X_s : s \subseteq_1 \mathcal{U} \}$ and so $X = X_M = \bigcup\{X_s : s \subseteq_1 M \}$;
	\item\label{item7} $\bar{f} = (f_{\bar{g}} : \bar{g} \in \mathcal{G}_*)$ (recall the definition of $\mathcal{G}_*$ from \ref{hyp_C0}(\ref{G_*_real})) and:
	\begin{enumerate}[(a)]
	\item\label{item_8a} $f_{\bar{g}}$ is a finite partial bijection of $X$ and $f_{\bar{g}}$ is the empty function iff $\mrm{lg}(\bar{g}) = 0$;
	\item\label{item_dom_f} $\mathrm{dom}(f_{\bar{g}}) \subseteq X_{\mathrm{dom}(\bar{g})}$ and $\mathrm{ran}(f_{\bar{g}}) \subseteq X_{\mathrm{ran}(\bar{g})}$ (cf. \ref{dom_barg_notation}(\ref{item_domain})(\ref{item_range})),\mbox{ so $\mrm{dom}(f_{()}) = \emptyset$;}
	\item\label{itemf} for $s, t \subseteq_1 M$ and $\bar{g}[s] = t$ we have:
$$f_{\bar{g}}(x) = y \text{ implies } (x \in X'_s \text{ iff }  y \in X'_t).$$
	\item\label{item7d} for $s, t \subseteq_1 M$, ($f_{\bar{g}}(x) = y, x \in X'_s, y \in X'_t$) implies ($\bar{g}[s] = t$);
	\item\label{item_7e_inverses} $f_{\bar{g}^{-1}} = f^{-1}_{\bar{g}}$ (recall that $\bar{g}^{-1} \neq \bar{g}$, when $\mrm{dom}(\bar{g}) \neq \emptyset$);
\end{enumerate}
	\item\label{extending_thef's} $\bar{g}, \bar{g}' \in \mathcal{G}_*$, $\bar{g} \triangleleft \bar{g}'$ $\Rightarrow$  $f_{\bar{g}} \subsetneq f_{\bar{g}'}$;
	\item\label{the_graph_Rn} we define the graph $(\mathrm{seq}_n(X), R^{\mathfrak{m}}_n)$ as $(\bar{x}, \bar{y}) \in R^{\mathfrak{m}}_n = R_n$ when $\bar{x} \neq \bar{y}$ and:
	$$\text{ for some } \bar{g} \in \mathcal{G}_* \text{ we have } f_{\bar{g}}(\bar{x}) = \bar{y},$$
notice that $f^{-1}_{\bar{g}} = f_{\bar{g}^{-1}} \in \bar{f}$, as $\bar{g} \in \mathcal{G}_*$ implies $\bar{g}^{-1} \in \mathcal{G}_*$;
	\item\label{def_En}  $\bar{E}^{\mathfrak{m}} = \bar{E} = (E_n : 0 < n < \omega) = (E^{\mathfrak{m}}_n : 0 < n < \omega)$, and, for $0 < n < \omega$, 
	$E_n$ is the equivalence relation corresponding to the partition of $\mrm{seq}_n(X)$ given by the connected components of the graph $(\mrm{seq}_n(X), R_n)$;
	\item\label{item17_new} if $p$ is a prime, $k \geq 2$, $\bar{x} \in \mrm{seq}_k(X)$, $\mathbf{y} = (\bar{y}^i : i < i_*) \in (\bar{x}/E^{\mathfrak{m}}_k)^{i_*}$, with the $\bar{y}^i$'s pairwise distinct, $\bar{r} \in \mathbb{Q}^{\mathbf{y}}$, $q_\ell \in \mathbb{Q}_p$, for $\ell < k$, and:
	$$a_{(\mathbf{y}, \bar{r})}(y) = a_{(\mathbf{y}, \bar{r}, y)} = \sum \{r_{\bar{y}} q_\ell : \ell < k, \bar{y} = \bar{y}^i, i < i_*, y = y^i_\ell\},$$
	for $y \in \mrm{set}(\mathbf{y}) = \bigcup \{\mrm{ran}(\bar{y}^i) : i < i_* \}$, then we have the following:
	$$|\{y \in \mrm{set}(\mathbf{y}) : a_{(\mathbf{y}, \bar{r})}(y) \notin \mathbb{Q}_p\}| \neq 1,$$
where we recall that $\mathbb{Q}_p$ was defined in Definition~\ref{Qp};
	\item\label{item_for_iso_ltr} if for every $n < \omega$, $g_n \in \mathcal{G}$ and $g_n \subsetneq g_{n+1}$, $\mathcal{U} = \bigcup_{n < \omega} \mrm{dom}(g_n) \subseteq M$ and $\mathcal{V} = \bigcup_{n < \omega} \mrm{ran}(g_n) \subseteq M$, then we have the following:
	$$\bigcup_{n < \omega} \mrm{dom}(f_{(g_\ell \, : \, \ell < n)}) = X_\mathcal{U} \text{ and } \bigcup_{n < \omega} \mrm{ran}(f_{(g_\ell \, : \, \ell < n)}) = X_\mathcal{V}.$$
\end{enumerate}
\end{definition}

	The definition of $\mathfrak{m} \in \mathrm{K}^{\mrm{bo}}_2(M)$ from \ref{hyp_C1} isolates exactly what is needed for the group theoretic construction from Section~4 to take place. The rest of this section has as its sole purpose to show that an object as in Definition~\ref{hyp_C1} exists. To this extent, we introduce an auxiliary class of objects, $ \mathrm{K}^{\mrm{bo}}_1(M)$, cf. Definition~\ref{hyp_C1+}. This definition is devised with a twofold aim in mind: on one hand to put more detailed information on the objects at play in Definition~\ref{hyp_C1}, and on the other hand to be able to construct the desired $\mathfrak{m} \in \mathrm{K}^{\mrm{bo}}_2(M)$ as a limit of a sequence of approximations $\mathfrak{m}_\ell \in \mathrm{K}^{\mrm{bo}}_1(M)$, for $\ell < \omega$, of such an $\mathfrak{m} \in \mathrm{K}^{\mrm{bo}}_2(M)$. In this process the crucial algebraic condition (8) from Definition~\ref{hyp_C1} gets translated in the more technical algebraic condition (11) from Definition~\ref{hyp_C1+}, showing that this condition is preserved in the limit construction will be the most elaborated part of this section.

	\begin{definition}\label{hyp_C1+} In the context of Hyp.~\ref{hyp_C0}, let $\mathrm{K}^{\mrm{bo}}_1(M)$ be the class of objects $\mathfrak{m}(M) = \mathfrak{m} = (X^\mathfrak{m}, \bar{X}^\mathfrak{m}, I^\mathfrak{m}, \bar{I}^\mathfrak{m}, \bar{f}^\mathfrak{m}, \bar{E}^\mathfrak{m}, Y_{\mathfrak{m}}) = (X, \bar{X}, I, \bar{I}, \bar{f}, \bar{E}, Y)$ s.t.:
	\begin{enumerate}[(1)]
	\item\label{hyp_C1_item1+} $X$ is an infinite countable set and $X \subseteq \omega$;
	\item
	\begin{enumerate}
	\item\label{item_partition+} $(X'_s : s \subseteq_{1} M)$ is a partition of $X$ into infinite sets;
	\item for $s \subseteq_{\omega} M$, let $X_s = \bigcup _{t \subseteq_1 s} X'_t$;
	\item $\bar{X} = (X_s : s \subseteq_{\omega} M)$ and so $s \subseteq t \subseteq_{\omega} M$ implies $X_s \subseteq X_t$;
\end{enumerate}	 
	\item\label{def_XU+} for $\mathcal{U} \subseteq M$ let $X_{\mathcal{U}} = \bigcup\{X_s : s \subseteq_1 \mathcal{U} \}$ and so $X = X_M = \bigcup\{X_s : s \subseteq_1 M \}$;
	\item\label{def_In+} \begin{enumerate}
	\item\label{item4a+} $\bar{I} = (I_n : n < \omega) = (I^{\mathfrak{m}}_n : n < \omega)$ are pairwise disjoint;
	\item $\bar{g} \in I_n$ implies $\bar{g} \in \mathcal{G}_{*}^{m}$ for some $m \leq n$;
	\item $I_n$ is finite;
	\end{enumerate} 
	\item\label{3.3(5)+} if $\bar{g}' \triangleleft \bar{g} \in I_n$, then $\bar{g}' \in I_{< n} := \bigcup_{\ell < n} I_\ell$;
	\item $I = I^{\mathfrak{m}} = \bigcup_{n < \omega} I_n$;
	\item\label{item7+} $\bar{f} = (f_{\bar{g}} : \bar{g} \in I)$ and:
	\begin{enumerate}[(a)]
	\item\label{item_8a+} $f_{\bar{g}}$ is a finite partial bijection of $X$ and $f_{\bar{g}}$ is the empty function iff $\mrm{lg}(\bar{g}) = 0$;
	\item\label{item_dom_f+} $\mathrm{dom}(f_{\bar{g}}) \subseteq X_{\mathrm{dom}(\bar{g})}$ and $\mathrm{ran}(f_{\bar{g}}) \subseteq X_{\mathrm{ran}(\bar{g})}$ (cf. Notation~\ref{dom_barg_notation}(\ref{item_domain})(\ref{item_range}));
	\item\label{itemf+} for $s, t \subseteq_1 M$ and $\bar{g}[s] = t$ we have:
$$f_{\bar{g}}(x) = y \text{ implies } (x \in X'_s \text{ iff }  y \in X'_t).$$
	\item\label{item7d+} for $s, t \subseteq_1 M$, ($f_{\bar{g}}(x) = y, x \in X'_s, y \in X'_t$) implies ($\bar{g}[s] = t$);
	\item\label{item_7e_inverses+} if $\bar{g} \in I_n$, then $\bar{g}^{-1} \in I_n$ and $f_{\bar{g}^{-1}} = f^{-1}_{\bar{g}}$;
\end{enumerate}
	\item $\bar{g} \triangleleft \bar{g}'$ $\Rightarrow$  $f_{\bar{g}} \subsetneq f_{\bar{g}'}$;
	\item\label{the_graph_Rn+} we define the graph $(\mathrm{seq}_n(X), R^{\mathfrak{m}}_n)$ as $(\bar{x}, \bar{y}) \in R^{\mathfrak{m}}_n = R_n$ when $\bar{x} \neq \bar{y}$ and:
	$$\text{ for some } \bar{g} \in \mathcal{G}_* \text{ we have } f_{\bar{g}}(\bar{x}) = \bar{y},$$
notice that $f^{-1}_{\bar{g}} = f_{\bar{g}^{-1}} \in \bar{f}$, as $\bar{g} \in I$ implies $\bar{g}^{-1} \in I$;
	\item\label{item12+}
	\begin{enumerate}
	\item\label{def_En+} $\bar{E}^{\mathfrak{m}} = \bar{E} = (E_n : n < \omega) = (E^{\mathfrak{m}}_n : n < \omega)$, and, for $n < \omega$, 
	$E_n$ is the equivalence relation corresponding to the partition of $\mrm{seq}_n(X)$ given by the connected components of the graph $(\mrm{seq}_n(X), R_n)$;
	\item\label{item13c+} $Y = Y_{\mathfrak{m}}$ is a non-empty subset of $X$ which \underline{includes} the following set:
	$$\{x \in X : \text{ for some } \bar{g} \in I, \, x \in \mrm{dom}(f_{\bar{g}})\},$$
	notice that this inclusion may very well be proper;
	\item $\mrm{seq}_k(\mathfrak{m}) = \{\bar{x} \in \mrm{seq}_k(X) : \text{ for some } \bar{g} \in I, \, \bar{x} \subseteq \mrm{dom}(f_{\bar{g}})\}$, notice $\mrm{seq}_k(\mathfrak{m}) \subseteq \mrm{seq}_k(Y_{\mathfrak{m}})$ but the converse need not hold;
	\end{enumerate}
	\item\label{item17_new+} if $p$ is a prime, $k \geq 2$, $\bar{x} \in \mrm{seq}_k(X)$, $\bar{q} \in (\mathbb{Q}_p)^k$, $\mathfrak{s} = (p, k, \bar{x}, \bar{q})$ and $\bar{a} \in \mathcal{A}_{\mathfrak{s}}$, then $\mrm{supp}_p(\bar{a})$ is not a singleton, \underline{where} we define $\mathcal{A}_{\mathfrak{s}}$, $\mathcal{A}_{\mathfrak{m}}$ and $\mrm{supp}_p(\bar{a})$ as follows:
	\begin{enumerate}[(a)]
	\item\label{item17_new+_a} $\mathcal{A}_{\mathfrak{s}} \subseteq \mathcal{A}_{\mathfrak{m}} = \{(a_y : y \in Z) : Z \subseteq_\omega X \text{ and } a_y \in \mathbb{Q}\}$;
	\item\label{item17_new+_b} if $\bar{a} \in \mathcal{A}_{\mathfrak{m}}$, then we let:
	$$\mrm{supp}_p(\bar{a}) = \{y \in \mrm{dom}(\bar{a}) : a_y \notin \mathbb{Q}_p\};$$
	\item\label{item17_new+_c} if $\mathbf{y} = (\bar{y}^i : i < i_*) \in (\bar{x}/E^{\mathfrak{m}}_k)^{i_*}$ (but abusing notation we may treat $\mathbf{y}$ as a set), with the $\bar{y}^i$'s pairwise distinct and $\bar{r} \in \mathbb{Q}^{\mathbf{y}}$, then $\bar{a} \in \mathcal{A}_{\mathfrak{s}}$, where:
	$$\bar{a} = \bar{a}_{(\mathbf{y}, \bar{r})} = (a_y : y \in \mrm{set}(\mathbf{y})),$$
and where $a_y$ and $\mrm{set}(\mathbf{y})$ are defined as follows:
$$a_y = a_{(\mathbf{y}, \bar{r})}(y) = a_{(\mathbf{y}, \bar{r}, y)} = \sum \{r_{\bar{y}} q_\ell : \ell < k, \bar{y} = \bar{y}^i, i < i_*, y = y^i_\ell\},$$
	$$\mrm{set}(\mathbf{y}) = \bigcup \{\mrm{ran}(\bar{y}^i) : i < i_*\};$$
	\item\label{item17_new+_d} if $\bar{a} \in \mathcal{A}_{\mathfrak{s}}$ and $\mrm{supp}_p(\bar{a}) \subseteq Z \subseteq \mrm{dom}(\bar{a})$, then $\bar{a} \restriction Z \in \mathcal{A}_{\mathfrak{s}}$;
	\item\label{item17_new_e} if $\bar{a}, \bar{b} \in \mathcal{A}_{\mathfrak{s}}$, then $\bar{c} = \bar{a} + \bar{b} \in \mathcal{A}_{\mathfrak{s}}$, where $\mrm{dom}(\bar{c}) = \mrm{dom}(\bar{a}) \cup \mrm{dom}(\bar{b})$ and:
	\begin{enumerate}[(i)]
	\item $c_y = a_y + b_y$, if $y \in \mrm{dom}(\bar{a}) \cap \mrm{dom}(\bar{b})$;
	\item $c_y = a_y$, if $y \in \mrm{dom}(\bar{a}) \setminus \mrm{dom}(\bar{b})$;
	\item $c_y = b_y$, if $y \in \mrm{dom}(\bar{b}) \setminus \mrm{dom}(\bar{a})$;
\end{enumerate}	 
	\item\label{item17_new_extra} if $\bar{g} \in I^{\mathfrak{m}}$, $Z_1 \subseteq_\omega \mrm{dom}(f_{\bar{g}})$, $Z_2 = f_{\bar{g}}[Z_1]$ and $\bar{a} = (a_y : y \in Z_2) \in \mathcal{A}_{\mathfrak{s}} $, \underline{then} 
	$$\bar{a}^{[f_{\bar{g}}]} = (a_{f_{\bar{g}}(y)} : y \in Z_1) \in \mathcal{A}_{\mathfrak{s}};$$
	\item\label{item17_new_f} $\mathcal{A}_{\mathfrak{s}}$ is the minimal subset of $\mathcal{A}_{\mathfrak{m}}$ satisfying clauses (c)-(f).
	\end{enumerate}
\end{enumerate}
\end{definition}

	As mentioned above, members in $\mathfrak{m} \in \mathrm{K}^{\mrm{bo}}_1(M)$ are to be thought of as approximations to objects in $\mathrm{K}^{\mrm{bo}}_2(M)$, but technically an $\mathfrak{m} \in \mathrm{K}^{\mrm{bo}}_1(M)$ and an $\mathfrak{m} \in \mathrm{K}^{\mrm{bo}}_2(M)$ are made of different components, so we give a name to the objects in $\mathfrak{m} \in \mathrm{K}^{\mrm{bo}}_1(M)$ which which are essentially members of $\mathrm{K}^{\mrm{bo}}_2(M)$, we call them full, see \ref{def_full_K_hop_bis}.

\begin{definition}\label{def_full_K_hop_bis} For $\mathfrak{m} \in \mathrm{K}^{\mrm{bo}}_1(M)$, we say that $\mathfrak{m}$ is full when in addition to (\ref{hyp_C1_item1})-(\ref{item17_new+}) condition \ref{hyp_C1}(\ref{item_for_iso_ltr}) is satisfied and \ref{hyp_C1+}(\ref{item7}) is strengthen to \ref{hyp_C1}(\ref{item7}) (that is we ask $I = \mathcal{G}_*$), explicitly to (\ref{hyp_C1_item1})-(\ref{item17_new+}) from \ref{hyp_C1+} we add: 
	\begin{enumerate}[(12)]
	\item\label{item_for_iso_ltr+} if for every $n < \omega$, $g_n \in \mathcal{G}$ and $g_n \subsetneq g_{n+1}$, $\mathcal{U} = \bigcup_{n < \omega} \mrm{dom}(g_n) \subseteq M$ and $\mathcal{V} = \bigcup_{n < \omega} \mrm{ran}(g_n) \subseteq M$, then we have the following:
	$$\bigcup_{n < \omega} \mrm{dom}(f_{(g_\ell \, : \, \ell < n)}) = X_\mathcal{U} \text{ and } \bigcup_{n < \omega} \mrm{ran}(f_{(g_\ell \, : \, \ell < n)}) = X_\mathcal{V};$$
\end{enumerate}
\begin{enumerate}[(13)]
	\item\label{I_equal_G} $I = \bigcup_{n < \omega} I_n = \mathcal{G}_*$. 
\end{enumerate}
\end{definition}

	We shall concentrate on the $\mathfrak{m} \in \mathrm{K}^{\mrm{bo}}_1(M)$ which are, in some sense, with ``finite information'', i.e., the ones in which both $Y_{\mathfrak{m}}$ and $I^{\mathfrak{m}}$ are finite. Furthermore, we will define a notion of ``$\mathfrak{n}$ is a successor of $\mathfrak{m}$''. These notions are tailor made for our inductive construction of a full $\mathfrak{m} \in \mathrm{K}^{\mrm{bo}}_1(M)$ to take place.

	\begin{definition}\label{def_K0}\begin{enumerate}[(1)]
	\item\label{def_K0_n_of} $\mathrm{K}^{\mrm{bo}}_0(M)$ is the class of $\mathfrak{m} \in \mathrm{K}^{\mrm{bo}}_1(M)$ such that $Y_{\mathfrak{m}}$ is finite and for some $n < \omega$ we have that for every $m \geq n$, $I_m = \emptyset$ and $I_0 = \{()\}$.
In this case we let $n = n(\mathfrak{m})$ to be the minimal such $n < \omega$ (so $n(\mathfrak{m}) > 0$). 
	\item\label{def_K0(2)} We say that $\mathfrak{n} \in \mathrm{suc}(\mathfrak{m})$ when:
	\begin{enumerate}[(a)]
	\item $\mathfrak{n}, \mathfrak{m} \in \mathrm{K}^{\mrm{bo}}_0(M)$, $X^{\mathfrak{m}} = X^{\mathfrak{n}}$;
	\item for $s \subseteq_1 M$, $(X'_s)^{\mathfrak{m}} = (X'_s)^{\mathfrak{n}}$;
	\item for $t \subseteq_\omega M$, $(X_t)^{\mathfrak{m}} = (X_t)^{\mathfrak{n}}$ (follows);
	\item $n(\mathfrak{n}) = n+1$, where $n(\mathfrak{m}) = n$;
	\item if $\ell < n(\mathfrak{m})$, then $I^{\mathfrak{m}}_\ell = I^{\mathfrak{n}}_\ell$ and $\bigwedge_{\bar{g} \in I^{\mathfrak{m}}_\ell} f^{\mathfrak{m}}_{\bar{g}} = f^{\mathfrak{n}}_{\bar{g}}$;
	\item for some $\bar{g} \in \mathcal{G}_*$, $I^{\mathfrak{n}}_{n} = \{ \bar{g}, \bar{g}^{-1} \}$, $lg({\bar{g}}) \leq n$ and $\ell < \mrm{lg}(\bar{g})$ implies:
	$$\bar{g} \restriction \ell \in \bigcup_{\ell < n} I^{\mathfrak{m}}_{\ell},$$
notice that $\bar{g} \notin \bigcup_{\ell < n} I^{\mathfrak{m}}_\ell$ (by Definition~\ref{hyp_C1+}(\ref{item4a+})) and the symmetric condition $\bar{g}^{-1} \restriction \ell \in \bigcup_{\ell < n} I^{\mathfrak{m}}_\ell$ follows from Definition~\ref{hyp_C1+}(\ref{item_7e_inverses+});
	\item\label{itemf_suc_m}
	\begin{enumerate}[$(\alpha)$]
	\item if $\bar{x} E^{\mathfrak{n}}_k \bar{y}$ and $\neg (\bar{x} E^{\mathfrak{m}}_k \bar{y})$, then $\bar{x} \notin \mrm{seq}_k(\mathfrak{m})$ or $\bar{y} \notin \mrm{seq}_k(\mathfrak{m})$;
\end{enumerate}	 
\begin{enumerate}[$(\beta)$]
	\item $E^\mathfrak{n}_k \restriction \mrm{seq}_k(\mathfrak{m}) = E^\mathfrak{m}_k \restriction \mrm{seq}_k(\mathfrak{m})$.
\end{enumerate}	
\end{enumerate}	
	\item $<_{\mathrm{suc}}$ on $\mathrm{K}^{\mrm{bo}}_0(M)$ is the transitive closure of the relation $\mathfrak{n} \in \mrm{suc}(\mathfrak{m})$.
	\end{enumerate}
\end{definition}

	The heart of this section is the following claim.

	\begin{cclaim}\label{K2bo_non_empty} For $M$ as in Hyp.~\ref{hyp_C0}, there exists $\mathfrak{m} \in \mathrm{K}^{\mrm{bo}}_1(M)$ which is full.
\end{cclaim}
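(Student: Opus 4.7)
The plan is to construct $\mathfrak{m}$ as the direct limit of a chain $\mathfrak{m}_0 <_{\mrm{suc}} \mathfrak{m}_1 <_{\mrm{suc}} \cdots$ in $\mrm{K}^{\mrm{bo}}_0(M)$, where each successor step adjoins one new $\bar{g}^\star_n \in \mathcal{G}_*$ (and its inverse) together with its finite partial bijection. I take $X = \omega$ with a fixed partition $(X'_s : s \subseteq_1 M)$ into infinite sets and let $X_s = \bigcup_{t \subseteq_1 s} X'_t$. I declare $\mathfrak{m}_0$ to be trivial: $I^{\mathfrak{m}_0} = \{()\}$, $f_{()} = \emptyset$, $Y_{\mathfrak{m}_0}$ any non-empty finite subset of $X$, so that clauses \ref{hyp_C1+}(\ref{hyp_C1_item1+})--(\ref{item17_new+}) hold vacuously. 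I then enumerate $\mathcal{G}_* = \{\bar{g}^\star_n : n < \omega\}$ respecting initial segments, and with standard bookkeeping so that every countable chain $g_0 \subsetneq g_1 \subsetneq \cdots$ in $\mathcal{G}$ has all its initial segments processed and each prescribed element of $X_{\mrm{dom}(g_n)}$ eventually enters some $\mrm{dom}(f_{(g_\ell \, : \, \ell \leq n)})$; this is the ingredient that will deliver fullness condition \ref{def_full_K_hop_bis}(\ref{item_for_iso_ltr+}).

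At step $n$ I set $\bar{g}^\star := \bar{g}^\star_n$; by induction all proper initial segments of $\bar{g}^\star$ lie in $I^{\mathfrak{m}_n}$, so $f_{\bar{g}^\star \restriction (\mrm{lg}(\bar{g}^\star) - 1)}$ is already defined. I extend it by a finite packet of new pairs, choosing fresh domain points in $X_{\mrm{dom}(\bar{g}^\star)}$ and range points in $X_{\mrm{ran}(\bar{g}^\star)}$ honoring the typing constraints \ref{hyp_C1+}(\ref{itemf+})--(\ref{item7d+}). Since each $X'_s$ is infinite while only finitely many of its elements have been used, this freedom is ample. The $<_{\mrm{suc}}$-regulation of $E_k$ in clause \ref{def_K0}(\ref{def_K0(2)})(\ref{itemf_suc_m}) then automatically controls how $E^{\mathfrak{m}_{n+1}}_k$ extends $E^{\mathfrak{m}_n}_k$.

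The main obstacle is preserving clause \ref{hyp_C1+}(\ref{item17_new+}) at each step. Because $E^{\mathfrak{m}_{n+1}}_k$ agrees with $E^{\mathfrak{m}_n}_k$ on $\mrm{seq}_k(\mathfrak{m}_n)$, no ``old'' element of any $\mathcal{A}_{\mathfrak{s}}$ changes its $p$-support, so the potential offenders come only from (i) new generators (\ref{item17_new+_c}) whose constituent $\bar{y}^i$'s involve a tuple that the fresh $f^\star = f_{\bar{g}^\star}$ has just pulled into an existing $E_k$-class, (ii) pullbacks (\ref{item17_new_extra}) through $f^\star$, and (iii) finite additive combinations (\ref{item17_new_e}) of these with previously validated elements. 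For each such candidate the requirement ``$\mrm{supp}_p(\bar{a})$ is a singleton $\{y\}$'' translates into a $\mathbb{Q}_p$-linear equation on the freshly chosen coordinates; at any given step this is a finite list of equations, each forbidding only finitely many choices inside each $X'_s$, so a sufficiently generic selection of the new domain/range points of $f^\star$ defeats them all. I expect this generic-choice step to be the delicate point of the construction, but the combinatorics at each stage is finite, hence tractable.

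Finally, taking the limit $\mathfrak{m}$ with $I^{\mathfrak{m}} = \mathcal{G}_*$, $f^{\mathfrak{m}}_{\bar{g}} = f^{\mathfrak{m}_n}_{\bar{g}}$ for the least $n$ with $\bar{g} \in I^{\mathfrak{m}_n}$, and $Y_{\mathfrak{m}} = X$, all clauses of Def.~\ref{hyp_C1+} are stable under unions. Clause (\ref{item17_new+}) survives the limit because every $\bar{a} \in \mathcal{A}_{\mathfrak{s}}$ has finite support and therefore already belongs to the corresponding finite-stage $\mathcal{A}_{\mathfrak{s}}$. Fullness condition \ref{def_full_K_hop_bis}(\ref{I_equal_G}) is immediate from the enumeration and \ref{def_full_K_hop_bis}(\ref{item_for_iso_ltr+}) follows from the bookkeeping, so $\mathfrak{m}$ is a full element of $\mrm{K}^{\mrm{bo}}_1(M)$, as required.
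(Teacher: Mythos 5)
Your overall architecture coincides with the paper's: a trivial base object, a successor step in $\mathrm{K}^{\mrm{bo}}_0(M)$ adjoining one $\bar{g}$ and its inverse with fresh range points, bookkeeping over an enumeration of $\mathcal{G}_*$, and a union at the limit. The gap is exactly where you flag it, and your proposed mechanism for closing it does not work. The preservation of \ref{hyp_C1+}(\ref{item17_new+}) is \emph{not} a matter of dodging ``a finite list of equations, each forbidding only finitely many choices inside each $X'_s$.'' First, the condition quantifies over all $\bar{a}\in\mathcal{A}_{\mathfrak{s}}$, and $\mathcal{A}_{\mathfrak{s}}$ is the closure of the generators (\ref{item17_new+_c}) under restriction, arbitrary finite sums (\ref{item17_new_e}) and pullbacks along every $f_{\bar{g}}$ (\ref{item17_new_extra}), with coefficient vectors $\bar{r}$ ranging over all of $\mathbb{Q}^{\mathbf{y}}$ and $i_*$ unbounded; there is no finite list. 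Second, whether $\mrm{supp}_p(\bar{a})$ is a singleton depends only on the incidence pattern of the tuples $\bar{y}^i$ (which coordinates are shared) and on the rationals $r_{\bar{y}},q_\ell$ — it is invariant under any relabelling of $X$ respecting the partition $(X'_s)$. Hence ``genericity'' of the new points within each $X'_s$ buys nothing beyond freshness: for a given incidence structure the singleton-support condition either fails for every choice of fresh points or for none.

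What freshness does buy is structural: if, as in the paper's $(*)_{2.2}$, one forces $\mrm{dom}(f_*)\cap\mrm{ran}(f_*)=\mrm{ran}(f_{\bar{g}})$ and $f_*(x)\notin Y_{\mathfrak{m}}$ for $x\notin\mrm{dom}(f_{\bar{g}})$, then every new tuple in an $E_k$-class is pendant and meets the old structure only inside $\mrm{ran}(f_{\bar{g}})$. But one must then still \emph{prove} that the closure operations cannot manufacture a singleton support. The paper does this by showing $\mathcal{A}^{\mathfrak{n}}_{\mathfrak{s}}\subseteq\mathcal{A}'$, where $\mathcal{A}'$ consists of restrictions of sums $\bar{a}_1+\bar{a}_2$ with $\bar{a}_1$ in the old $\mathcal{A}_{\mathfrak{s}(1)}$ and $\bar{a}_2$ a pushforward of an old element along $f_*$ (this requires checking that $\mathcal{A}'$ is itself closed under (c)--(f), the pullback clause being the delicate case), and then runs a case analysis on whether $\mrm{supp}_p(\bar{a}_\ell)\subseteq\mrm{ran}(f_{\bar{g}})$: if both parts have support escaping $\mrm{ran}(f_{\bar{g}})$ one exhibits two distinct points of the $p$-support (one from each part, since $\mrm{dom}(\bar{a}_1)\cap\mrm{dom}(\bar{a}_2)\subseteq\mrm{ran}(f_{\bar{g}})$), and otherwise everything pulls back into $\mathfrak{m}$ and the inductive hypothesis applies. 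Some argument of this decomposition-plus-induction type is indispensable; without it your proof is incomplete at its central step.
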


	\begin{proof} Our strategy is to construct a full $\mathfrak{m} \in \mathrm{K}^{\mrm{bo}}_1(M)$ as a limit of members $\mathfrak{m}_\ell \in \mathrm{K}^{\mrm{bo}}_0(M)$, for $\ell < \omega$. Naturally, $\mathfrak{m}_0$ is not hard to choose, see $(*)_1$ below. Concerning the choice of the $\mathfrak{m}_{\ell}$'s, in $(*)_3$ below we list our tasks: for every $\bar{g} \in \mathcal{G}^*$ we have a $\bar{g}$-task which is ensuring that $f_{\bar{g}}$ is well-defined. Thus, we list $\mathcal{G}^*$ as $(\bar{g}_\ell : \ell < \omega)$ appropriately and in choosing $\mathfrak{m}_{\ell+1}$, a successor of $\mathfrak{m}_{\ell}$, we take care of the $\bar{g}_\ell$-task. 
This lead us to the main part of the proof, namely $(*)_2$. Here we are given $\mathfrak{m}$ and appropriate $\bar{g}^\frown(g) \in \mathcal{G}^*$ such that $\bar{g} \in I_{\mathfrak{m}}$, i.e., $f_{\bar{g}}$ is already well-defined for $\mathfrak{m}$. Our aim is to define a suitable successor $\mathfrak{n}$ of $\mathfrak{m}$ and, in particular, to define $f_{\bar{g}^\frown(g)}$ for $\mathfrak{n}$. Moreover to take care of the fullness of the limit we want both $\mrm{dom}(f_{\bar{g}^\frown(g)})$ and $Y_{\mathfrak{m}}$ to be large enough, \mbox{this explains the statement of $(*)_2$.}
	
	\begin{enumerate}[$(*)_1$]
	\item $\mathrm{K}^{\mrm{bo}}_0(M) \neq \emptyset$.
\end{enumerate}
\noindent [Why? Let $\mathfrak{m}$ be such that:
\begin{enumerate}[(a)]
	\item $|X| = \aleph_0$, and $X \subseteq \omega$;
	\item $(X'_s : s \subseteq_1 M)$ is a partition of $X$ into infinite sets;
	\item for $s \subseteq_\omega M$, $X_s = \bigcup_{t \subseteq_1 s} X'_t$;
	\item $\bar{X} = (X_s : s \subseteq_\omega M)$;
	\item $I^{\mathfrak{m}}_0 = \{()\}$, $f_{()}$ is the empty function, $\bar{f} = (f_{()})$ and $I_{1+n} = \emptyset$, for every $n < \omega$;
	\item $Y_{\mathfrak{m}}$ is any finite non-empty subset of $X$.
	\end{enumerate}
Notice that $()$ denotes the empty sequence and under this choice of $\mathfrak{m}$,  $n(\mathfrak{m}) = 1$, where we recall that the notation $n(\mathfrak{m})$ was introduced in Definition~\ref{def_K0}(\ref{def_K0_n_of}). Notice also that \ref{hyp_C1+}(\ref{item17_new+}) is easy to verify for $\mathfrak{m}$ as above, as $\bar{x}/E^{\mathfrak{m}}_k$ is always a singleton.]
\begin{enumerate}[$(*)_2$]
	\item If $\mathfrak{m} \in \mathrm{K}^{\mrm{bo}}_0(M)$, $n = n(\mathfrak{m}) > 0$, $\bar{g} = (g_0, ..., g_{m-1}) \in I^{\mathfrak{m}}$ (so $n > m$) and:
	\begin{enumerate}[(i)]
	\item $g \in \mathcal{G}$;
	\item for every $\ell < m$, $g_\ell \subsetneq g$;
	\item $\bar{g}^\frown (g) \notin I^\mathfrak{m}$;
\end{enumerate}	
\underline{then} there is $\mathfrak{n} \in \mathrm{K}^{\mrm{bo}}_0(M)$ such that:
	\begin{enumerate}
	\item $\mathfrak{n} \in \mrm{suc}(\mathfrak{m})$;
	\item $\bar{g}^\frown (g) \in I^{\mathfrak{n}}_{n}$;
	\item\label{for_fulness} if $s \subseteq_1 s^+ = \mrm{dom}(g) \cup \mrm{ran}(g)$, then $Y_{\mathfrak{n}}$ contains $\mrm{min}(X'_s \setminus Y_{\mathfrak{m}})$;
	\item $\mrm{dom}(f^{\mathfrak{n}}_{\bar{g}^\frown(g)}) = Y_{\mathfrak{m}} \cap X_{\mrm{dom}(g)}$;
	\item so $n(\mathfrak{n}) = n(\mathfrak{m}) +1$.
\end{enumerate}	 
\end{enumerate}
The proof of $(*)_2$ is clearly the heart of the proof. The choice of $\mathfrak{n}$ in  $(*)_{2.3}$ below is natural: we choose $f^{\mathfrak{n}}_{\bar{g}^\frown(g)} = f_*$ ``freely'', i.e., it extends $f^{\mathfrak{m}}_{\bar{g}}$, it has large enough domain and no ``accidental equality'' holds. Lastly, $Y_{\mathfrak{n}}$ has to include $Y_{\mathfrak{m}}$, $\mrm{ran}(f_*)$ and witnesses toward the proof of fullness (cf. $(*)_2$(c)), which will be dealt with in the next successor step, so we are making sure that the induction goes on.
 
\smallskip
\noindent We thus move to the proof of $(*)_2$, where we let $f^{\mathfrak{m}}_{\bar{g}} = f_{\bar{g}}$.
	\begin{enumerate}[$(*)_{2.1}$]
	\item Let $s_* = \mrm{dom}(g) \subseteq_{\omega} M$, hence $\mrm{dom}(\bar{g}) \subsetneq s_*$ and let $u_* = Y_{\mathfrak{m}} \cap X_{s_*}$.
	\end{enumerate}
	\begin{enumerate}[$(*)_{2.2}$]
	\item Let $f_*$ be a finite permutation of $X$ satisfying the following:
	\begin{enumerate}
	\item $f_*$ obeys \ref{hyp_C1+}(\ref{item_8a+})-(\ref{item7d+}) for $\bar{g}^\frown(g)$ and $\mrm{dom}(f_*) = u_*$;
	\item $f_*$ extends $f_{\bar{g}}$;
	\item $\mrm{dom}(f_*) \cap \mrm{ran}(f_*) = \mrm{ran}(f_{\bar{g}})$;
	\item if $x \in \mrm{dom}(f_*) \setminus \mrm{dom}(f_{\bar{g}})$ then $f_*(x) \notin Y_{\mathfrak{m}}$ (so $f_{*}(x) \notin \mrm{dom}(f_*)$).
	\end{enumerate}	
\end{enumerate}
We now define $\mathfrak{n}$, as required in $(*)_2$.
	\begin{enumerate}[$(*)_{2.3}$]
	\item
	\begin{enumerate}[(A)]
	\item 
	\begin{enumerate}[(a)]
	\item $X^{\mathfrak{n}} = X^{\mathfrak{m}}$ and $\bar{X}^{\mathfrak{n}} = \bar{X}^{\mathfrak{m}}$;
	\item $I^{\mathfrak{n}}_n = \{ \bar{g}^\frown(g), (\bar{g}^{-1})^\frown(g^{-1}) \}$;
	\item $I^{\mathfrak{n}} = I^{\mathfrak{m}} \cup I^{\mathfrak{n}}_n$;
	\item $I^{\mathfrak{n}}_\ell = I^{\mathfrak{m}}_\ell$, for $\ell \neq n$;
	\item $f^{\mathfrak{n}}_{\bar{h}} = f^{\mathfrak{m}}_{\bar{h}}$, for $\bar{h} \in I^{\mathfrak{m}}$.
	\end{enumerate}
	\item
	\begin{enumerate}[(a)]
	\item $n(\mathfrak{n}) = n+1$;
	\item $f^{\mathfrak{n}}_{\bar{g}^\frown(g)} = f_{*}$,  $f^{\mathfrak{n}}_{(\bar{g}^{-1})^\frown(g^{-1})} = f^{-1}_{*}$;
	\item $Y_{\mathfrak{n}} = Z \cup Z^+$, where (noticing $f_{*}[Y_\mathfrak{m}] = \mrm{ran}(f_{*})$):
	\begin{enumerate}[($\cdot_1$)]
	\item $Z = Y_{\mathfrak{m}} \cup f_{*}[Y_{\mathfrak{m}}]$;
	\item $Z^+ = \{\mrm{min}(X'_s \setminus Y_{\mathfrak{m}}) : s \subseteq_1 s^+ \} \setminus Z$, recalling $(*)_2$(c).
	\end{enumerate}
	\end{enumerate}
\end{enumerate}
	\end{enumerate}
The reason for $Z^+$ in (B)(c) above it to satisfy condition $(*)_2$(c).
	\begin{enumerate}[$(*)_{2.3.1}$]
	\item $R^{\mathfrak{n}}_k$ and $E^{\mathfrak{n}}_k$ are defined from the information in $(*)_{2.3}$, as in \ref{hyp_C1+}(\ref{the_graph_Rn+}).
\end{enumerate}

\smallskip
\noindent Comparing $(\mrm{seq}_k(X), R^\mathfrak{n}_k)$ and $(\mrm{seq}_k(X), R^\mathfrak{m}_k)$ the set of new edges is:
$$\{(\bar{x}, \bar{y}) : (\bar{x}, \bar{y}) \in Z^k_1 \cup Z^k_{-1} \},$$
\noindent where we let:
\begin{enumerate}[$(*)_{2.4}$]
	\item
	$$Z^k_1 = \{(\bar{x}, \bar{y}) : \bar{x} \in \mrm{seq}_k(\mrm{dom}(f_*)), f_*(\bar{x}) = \bar{y}, \bar{x} \notin \mrm{seq}_k(\mrm{dom}(f_{\bar{g}}))\},$$
	$$Z^k_{-1} = \{(\bar{x}, \bar{y}) : (\bar{y}, \bar{x}) \in Z^k_1 \},$$
\end{enumerate}
Notice that possibly $\bar{x} \subseteq \mrm{dom}(f_*) \wedge \bar{x} \notin \mrm{seq}_k(\mathfrak{m})$, and possibly $\bar{x} \subseteq \mrm{dom}(f_*) \wedge \bar{x} \not\subseteq \mrm{dom}(f^{\mathfrak{m}}_{\bar{g}}) \wedge \bar{x} \in \mrm{seq}_k(\mathfrak{m})$ (as witnessed by some $\bar{g}' \in I^{\mathfrak{m}}_{< n}$), anyhow the union $Z^k_1 \cup Z^k_{-1}$ is disjoint, as $\mrm{dom}(f_*) = u_*$, by $(*)_{2.2}$(a), 
$u_* \subseteq Y_{\mathfrak{m}}$, by $(*)_{2.1}$, and $x \in \mrm{dom}(f_*) \setminus \mrm{dom}(f_{\bar{g}})$ implies $f_*(x) \notin Y_{\mathfrak{m}}$, by $(*)_{2.2}$(d). Notice now that:
\begin{enumerate}[$(*)_{2.4.1}$]
	\item if $\bar{x} \in \mrm{seq}_k(u_*)$ and $\bar{y} = f_*(\bar{x})$, then:
	$$\bar{x} \subseteq \mrm{dom}(f_{\bar{g}}) \Leftrightarrow \bar{y} \subseteq \mrm{ran}(f_{\bar{g}}) \Rightarrow (\bar{x} \in \mrm{seq}_k(\mathfrak{m}) \wedge \bar{y} \in \mrm{seq}_k(\mathfrak{m})).$$
\end{enumerate}
Now, we have:
	\begin{enumerate}[$(*)_{2.5}$]
	\item
	\begin{enumerate}[(a)]
	\item if $(\bar{x}, \bar{y}) \in Z^k_1$, then:
	\begin{enumerate}[$(\alpha)$]
	\item $\bar{x} \in \mrm{seq}_k(u_*)$ and $\bar{x} \not\subseteq \mrm{dom}(f_{\bar{g}})$;
	\end{enumerate}
	\begin{enumerate}[$(\beta)$]
	\item $\bar{y} \subseteq f_*(u_*)$, $\bar{y} \not\subseteq Y_{\mathfrak{m}}$, $\bar{y} \not\subseteq \mrm{ran}(f_{\bar{g}})$ and $\bar{y} \cap Y_{\mathfrak{m}} \subseteq \mrm{ran}(f_{\bar{g}})$;
	\end{enumerate}
	\item the dual of item (a) for $(\bar{x}, \bar{y}) \in Z^k_{-1}$;
	\item if $\bar{z} \in \mrm{seq}_k(\mathfrak{n}) \setminus \mrm{seq}_k(\mathfrak{m})$, then $\bar{z}$ occurs in exactly one edge of $R^{\mathfrak{n}}_k$.
	\end{enumerate}
\end{enumerate}
[Why? Item (a)$(\beta)$ is by $(*)_{2.2}(d)$. Item (c) is by $(*)_{2.2}(c)$.]
\newline Notice now that:
	\begin{enumerate}[$(*)_{2.6}$]
	\item in the graph $(\mrm{seq}_k(X), R^{\mathfrak{n}}_k)$ we have (where $\bar{x} \in \mrm{seq}_k(X)$ below):
	\begin{enumerate}[(i)]
	\item all the new edges have at least one node in $\mrm{seq}_k(u_*) \setminus \mrm{seq}_k(\mrm{dom}(f_{\bar{g}}))$ and one in $\mrm{seq}_k(f_*[u_*]) \setminus \mrm{seq}_k(\mrm{ran}(f_{\bar{g}})) = \mrm{seq}_k(f_*[u_*]) \setminus \mrm{seq}_k(Y_{\mathfrak{m}})$;
	\item every node in $\mrm{seq}_k(\mathfrak{n}) \setminus \mrm{seq}_k(Y_\mathfrak{m})$ has valency $1$;
	\item if $\bar{x} \not\subseteq Y_{\mathfrak{m}}$ and $\bar{x} \not\subseteq \mrm{ran}(f_{*})$, then $\bar{x}/E^{\mathfrak{n}}_k = \{ \bar{x} \}$;
	\item if $\bar{x} \subseteq Y_{\mathfrak{m}}$ and $\bar{x} \not\subseteq \mrm{dom}(f_{*})$, then $\bar{x}/E^\mathfrak{n}_k = \bar{x}/E^\mathfrak{m}_k$;
	\item if $\bar{x} \subseteq \mrm{dom}(f_{*})$ (hence $\bar{x} \subseteq Y_{\mathfrak{m}}$), then:
	$$\bar{x}/E^{\mathfrak{n}}_k = \bar{x}/E^{\mathfrak{m}}_k \cup \{f_{*}(\bar{y}) : \bar{y} \in \bar{x}/E^{\mathfrak{m}}_k, \bar{y} \subseteq u_*, \bar{y} \not\subseteq \mrm{dom}(f_{\bar{g}}) \};$$
	\item if $\bar{x} \subseteq \mrm{dom}(f_{\bar{g}})$ and $\bar{x}/E^{\mathfrak{m}}_k \cap \mrm{seq}_k(u_*) \subseteq \mrm{seq}_k(\mrm{dom}(f_{\bar{g}}))$, then:
	$$\bar{x}/E^\mathfrak{n}_k = \bar{x}/E^\mathfrak{m}_k = f_*(\bar{x})/E^\mathfrak{m}_k;$$
	\item if $\bar{x} \not\subseteq Y_{\mathfrak{m}}$ but $\bar{x} \subseteq f_*(u_*)$, then $\bar{x}/E^\mathfrak{n}_k = f_*^{-1}(\bar{x})/E^\mathfrak{n}_k$;
        \item if $\bar{x} \in \mrm{seq}_k(Y_{\mathfrak{m}})$, then:
        $$(\bar{x}/E^\mathfrak{n}_k) \cap \mrm{seq}_k(Y_{\mathfrak{m}}) = (\bar{x}/E^\mathfrak{m}_k) \cap \mrm{seq}_k(Y_{\mathfrak{m}}).$$
	\end{enumerate}
\end{enumerate}
Notice also that:
\begin{enumerate}[$(*)_{2.6.1}$]
	\item
	\begin{enumerate}[(a)]
	\item if $\bar{x}_0, ..., \bar{x}_m$ is a path in $(\mrm{seq}_k(\mathfrak{n}), R^{\mathfrak{n}}_k)$ with no repetitions and $0 < \ell < m$, then $\bar{x}_\ell \in \mrm{seq}_k(\mathfrak{m})$;
	\item $E^\mathfrak{n}_k \restriction \mrm{seq}_k(\mathfrak{m}) = E^\mathfrak{m}_k \restriction \mrm{seq}_k(\mathfrak{m})$ and $E^\mathfrak{n}_k \restriction \mrm{seq}_k(Y_\mathfrak{m}) = E^\mathfrak{m}_k \restriction \mrm{seq}_k(Y_\mathfrak{m})$;
	\end{enumerate}
\end{enumerate}
Now, we claim:
\begin{enumerate}[$(*)_{2.7}$]
	\item $\mathfrak{n} \in \mathrm{K}^{\mrm{bo}}_0(M)$ and $\mathfrak{n} \in \mathrm{suc}(\mathfrak{m})$.
\end{enumerate}
The only non-trivial thing is to verify that $\mathfrak{n}$ satisfies \ref{hyp_C1+}(\ref{item17_new+}). In principle, verifying that this holds should be straightforward. As $\mathfrak{n}$ is explicitly defined in an essentially free manner, we should be able to check the algebraic condition \ref{hyp_C1+}(\ref{item17_new+}). In actuality, though, verifying \ref{hyp_C1+}(\ref{item17_new+}) would require an explicit description of $\mathcal{A}^{\mathfrak{n}}_{\mathfrak{s}}$. We circumvent this by defining explicitly an $\mathcal{A}'$ such that $\mathcal{A}^{\mathfrak{n}}_{\mathfrak{s}} \subseteq \mathcal{A}'$ (cf. $(*)_{2.7.5}$) and such that $\mathcal{A}'$ satisfies the crucial condition that each $\bar{a} \in \mathcal{A}'$ has non singleton $p$-support (cf. $(*)_{2.7.6}$). Notice that in order to show that $\mathcal{A}^{\mathfrak{n}}_{\mathfrak{s}} \subseteq \mathcal{A}'$ it suffices to show that $\mathcal{A}'$ satisfies the minimal set of condition defining $\mathcal{A}^{\mathfrak{n}}_{\mathfrak{s}}$, as defined in \ref{hyp_C1+}(\ref{item17_new+}), and so it is not hard to achieve, although the proof requires careful checking. Also the proof $(*)_{2.7.6}$ is in principle not hard but it \mbox{involves a careful checking of many cases.}

\smallskip
\noindent We thus move to the proof of \ref{hyp_C1+}(\ref{item17_new+}). To this extent:
\begin{enumerate}[$(*)_{2.7.0}$]
	\item let $\mathfrak{s} = (p, k, \bar{x}, \bar{q})$ be as in \ref{hyp_C1+}(\ref{item17_new+}).
\end{enumerate}
Now, if $\bar{x} \notin \mrm{seq}_k(Y_{\mathfrak{m}})$ and  $\bar{x} \notin \mrm{seq}_k(\mrm{ran}(f_{*}))$, then $\bar{x}/E^\mathfrak{n}_k$ is a singleton and so the proof is as in $(*)_1$. Thus, from now on we assume:
\begin{enumerate}[$(*)_{2.7.1}$]
	\item W.l.o.g. $\bar{x} \in \mrm{seq}_k(Y_{\mathfrak{m}})$ or $\bar{x} \in \mrm{seq}_k(\mrm{ran}(f_{*}))$.
\end{enumerate}
\begin{enumerate}[$(*)_{2.7.2}$]
	\item
	\begin{enumerate}[(a)]
	\item W.l.o.g. $\bar{x} \in \mrm{seq}_k(Y_{\mathfrak{m}})$;
	\item let $\mathfrak{s}$ be is as in \ref{hyp_C1+}(\ref{item17_new+}) for $\mathfrak{m}$ and $\bar{x}$;
	\item so $\mathcal{A}^{\mathfrak{m}}_{\mathfrak{s}}$ is well-defined.
\end{enumerate}	
\end{enumerate}
[Why (a)? If $\bar{x} \not\subseteq Y_{\mathfrak{m}}$, then, by $(*)_{2.7.1}$, necessarily $\bar{x} \subseteq \mrm{ran}(f_{*})$, so $f^{-1}_{*}(\bar{x}) \in \bar{x}/E^{\mathfrak{n}}_k$ and $f^{-1}_{*}(\bar{x}) \subseteq Y_{\mathfrak{m}}$ and, by $(*)_{2.6}$(vii), we can replace $\bar{x}$ by \mbox{$f^{-1}_{*}(\bar{x})$. (b), (c) are clear.]}
\begin{enumerate}[$(*)_{2.7.3}$]
	\item
	\begin{enumerate}[(a)]
	\item $\mathcal{A}^{\mathfrak{m}}_{\mathfrak{s}} \subseteq \mathcal{A}^{\mathfrak{n}}_{\mathfrak{s}}$, let $\mathcal{A}^1_{\mathfrak{s}} = \mathcal{A}^{\mathfrak{m}}_{\mathfrak{s}}$, recalling \ref{hyp_C1+}(\ref{item17_new+});
	\item let $\mathcal{A}^2_{\mathfrak{s}} = \{\bar{b}^{[f^{-1}_*]} : \bar{b} \in \mathcal{A}^1_{\mathfrak{s}} \text{ and } \mrm{dom}(\bar{b}) \subseteq \mrm{dom}(f_*) \}$, where for $\bar{b} = (b_y : y \in Z_1)$ with $Z_1 \subseteq \mrm{dom}(f_*)$ and $Z_2 = f_*[Z_1]$, where we let:
$$\bar{b}^{[f^{-1}_*]} = (b_{f^{-1}_*(y)} : y \in Z_2);$$
\item $\mathcal{A}^2_{\mathfrak{s}} \subseteq \{\bar{b} \in \mathcal{A}^2_{\mathfrak{s}} : \mrm{dom}(\bar{b}) \subseteq \mrm{ran}(f_*) \}$
\item recalling \ref{hyp_C1+}(\ref{item17_new+})(f), notice that for any function $h$ such that $\bar{b}^{[h]}$ is well-defined we have that if $\bar{b}^{[h]} = \bar{c}$, then the following happens:
$$\mrm{dom}(\bar{b}) \subseteq \mrm{ran}(h) \text{ and } \mrm{dom}(\bar{c}) \subseteq \mrm{dom}(h);$$
\end{enumerate}
\end{enumerate}
\begin{enumerate}[$(*)_{2.7.4}$]
	\item Let $\mathcal{A}'$ be the set of $\bar{a}$ such that for some $\bar{a}_1 \in \mathcal{A}^1_{\mathfrak{s}}$ and $\bar{a}_2 \in \mathcal{A}^2_{\mathfrak{s}}$ and $u$ such that $\mrm{supp}_p(\bar{a}_1 + \bar{a}_2) \subseteq u \subseteq \mrm{dom}(\bar{a}_1) \cup \mrm{dom}(\bar{a}_2)$ we have that $(\bar{a}_1 + \bar{a}_2) \restriction u = \bar{a}$. In this case we call $(\bar{a}_1, \bar{a}_2, u)$ a witness for $\bar{a}$.
\end{enumerate}
Now we crucially claim:
\begin{enumerate}[$(*)_{2.7.5}$]
	\item $\mathcal{A}^{\mathfrak{n}}_{\mathfrak{s}} \subseteq \mathcal{A}'$.
\end{enumerate}
Why $(*)_{2.7.5}$? Obviously $\mathcal{A}'$ satisfies \ref{hyp_C1+}(\ref{item17_new+})(a) and \ref{hyp_C1+}(\ref{item17_new+})(b) is a definition.
By \ref{hyp_C1+}(\ref{item17_new+})(g) it suffices to prove that $\mathcal{A}'$ satisfies (c)-(f) from \ref{hyp_C1+}(\ref{item17_new+}).
\begin{enumerate}[$(*)_{2.7.5.1}$]
	\item $\mathcal{A}'$ satisfies Clause \ref{hyp_C1+}(\ref{item17_new+})(c).
\end{enumerate}
Let $\mathbf{y} = (\bar{y}^i : i < i_*) \in (\bar{x}/E^{\mathfrak{n}}_k)^{i_*}$, $\bar{r} \in \mathbb{Q}^{\mathbf{y}}$ and $\bar{a} = \bar{a}_{(\mathbf{y}, \bar{r})}$ be as there. Recall that abusing notation we treat $\mathbf{y}$ as a set. Let:
$$\mathbf{y}_1 = \{\bar{y}^i : i < i_*, \bar{y}^i \subseteq Y_{\mathfrak{m}}\}$$ 
$$\mathbf{y}_2 = \{\bar{y}^i : i < i_*, \bar{y}^i \not\subseteq Y_{\mathfrak{m}} \text{ (so } \bar{y}^i \subseteq \mrm{ran}(f_{*}))\}.$$
Easily we have that $\mathbf{y}$ is the disjoint union of $\mathbf{y}_1$ and $\mathbf{y}_2$ and we have:
$$\bar{a}_{(\mathbf{y}, \bar{r})} = \bar{a}_{(\mathbf{y}_1, \bar{r} \restriction \mathbf{y}_1)} + \bar{a}_{(\mathbf{y}_2, \bar{r} \restriction \mathbf{y}_2)},$$
provided that we show that $\bar{a}_2 = \bar{a}_{(\mathbf{y}_2, \bar{r} \restriction \mathbf{y}_2)} \in \mathcal{A}^2_{\mathfrak{s}}$ (as $\bar{a}_1 = \bar{a}_{(\mathbf{y}_1, \bar{r} \restriction \mathbf{y}_1)} \in \mathcal{A}^1_{\mathfrak{s}}$ is obvious by $(*)_{2.6}$(viii)). We do this. Let $\mathbf{y}'_2 = \{f^{-1}_*(\bar{y}) : \bar{y} \in \mathbf{y}_2\}$. Now, if $\bar{y} \in \mathbf{y}_2$, then $f^{-1}_*(\bar{y}) = f^{-1}_{\bar{g}^\frown(g)}(\bar{y}) \in \bar{x}/E^{\mathfrak{n}}_k \cap \mrm{seq}_k(Y_{\mathfrak{m}}) \subseteq \bar{x}/E^{\mathfrak{m}}_k$. Why? First of all $f_*^{-1}(\bar{y}) = f^{-1}_{\bar{g}^\frown(g)}(\bar{y})$, by the choice of $\bar{g}^\frown(g)$. Secondly, $f^{-1}_{\bar{g}^\frown(g)} (\bar{y}) \in \bar{x}/E^{\mathfrak{n}}_k$ as $\bar{y} \subseteq f_*[u_*]$, $\bar{y} \notin Y_{\mathfrak{m}}$ and $\bar{y}/E^{\mathfrak{n}}_k = f^{-1}_*(\bar{y})/E^{\mathfrak{n}}_k$, by $(*)_{2.6}$(vii). Thirdly, $f^{-1}_{\bar{g}^\frown(g)} (\bar{y}) \in Y_{\mathfrak{m}}$, by the choice of $f_{\bar{g}^\frown(g)}$. Thus, $f^{-1}_{\bar{g}^\frown(g)} (\bar{y}) \in \bar{x}/E^{\mathfrak{n}}_k \cap Y_{\mathfrak{m}}$, and, by $(*)_{2.6}$(viii) we have that $\bar{x}/E^{\mathfrak{n}}_k \cap Y_{\mathfrak{m}} \subseteq \bar{x}/E^{\mathfrak{m}}_k$. Let now $\bar{r}'_2 = (r'_{(2, \bar{y})} : \bar{y} \in \mathbf{y}'_2)$, where $r'_{(2, \bar{y})} = r_{(2, f_*(\bar{y}))}$. Also, let $\bar{a}'_2 = (a'_{(2, y)} : y \in \mrm{set}(\mathbf{y}'_2))$, where for $y \in \mrm{set}(\mathbf{y}'_2)$, we let:
$$a'_{(2, y)} = \sum \{r'_{(2, \bar{y})} q_\ell : \bar{y} \in \mathbf{y}'_2 \text{ and } y_\ell = y\}.$$ 
As $\mathbf{y}'_2 \subseteq \bar{x}/E^{\mathfrak{m}}_k$ and $\mathfrak{m}$ satisfies \ref{hyp_C1+}(\ref{item17_new+})(c), easily $\bar{a}'_2 \in \mathcal{A}^{\mathfrak{m}}_{\mathfrak{s}} = \mathcal{A}^{1}_{\mathfrak{s}}$. Also, easily $y \in \mrm{set}(\mathbf{y}'_2)$ implies $a'_{(2, y)} = a_{(2, f_*(y))}$ (recall that $r'_{(2, \bar{y})} = r_{(2, f_*(\bar{y}))}$) and so $(\bar{a}'_2)^{[f^{-1}_*]} = \bar{a}_2$. 
Thus, $\bar{a}_2 \in \mathcal{A}^{2}_{\mathfrak{s}}$. Now, $\mathbf{y}'_2, \bar{r}'_2$ witness that $\bar{a}'_2 \in \mathcal{A}^1_{\mathfrak{s}}$ and so by the definition of $\mathcal{A}^2_{\mathfrak{s}}$ we are done. This concludes the proof of $(*)_{2.7.5.1}$.
\begin{enumerate}[$(*)_{2.7.5.2}$]
	\item $\mathcal{A}'$ satisfies Clause \ref{hyp_C1+}(\ref{item17_new+})(d).
\end{enumerate}
This is obvious by the definition of $\mathcal{A}'$.
\begin{enumerate}[$(*)_{2.7.5.3}$]
	\item $\mathcal{A}'$ satisfies Clause \ref{hyp_C1+}(\ref{item17_new+})(e).
\end{enumerate}
Let $\bar{a}, \bar{b} \in \mathcal{A}'$ and let $(\bar{a}_1, \bar{a}_2, u)$ be a witness for $\bar{a}$ and $(\bar{b}_1, \bar{b}_2, v)$ be a witness for $\bar{b}$, now $(\bar{a}_1 + \bar{a}_2, \bar{b}_1 + \bar{b}_2, u \cup v)$ is a witness for $\bar{a} + \bar{b}$. Hence, $\bar{c} = \bar{a} + \bar{b} \in \mathcal{A}'$.
\begin{enumerate}[$(*)_{2.7.5.4}$]
	\item $\mathcal{A}'$ satisfies Clause \ref{hyp_C1+}(\ref{item17_new+})(f).
\end{enumerate}
Let $\bar{h} \in I^{\mathfrak{n}}$, $Z_1 \subseteq \mrm{dom}(f_{\bar{h}})$, $Z_2 = f_{\bar{h}}[Z_1]$ and $\mrm{dom}(\bar{a}) \subseteq Z_2$. We shall prove that $\bar{a}^{[f_{\bar{h}}]} \in \mathcal{A}'$, where $\bar{a} \in \mathcal{A}'$ and $(\bar{a}_1, \bar{a}_2, u)$ is a witness of this.
\newline \underline{Case 1}. $u \not\subseteq Y_{\mathfrak{m}}$ and $u \not\subseteq \mrm{ran}(f_*)$.
\newline In this case there is no such $\bar{h}$.
\newline \underline{Case 2}. $u \not\subseteq Y_{\mathfrak{m}}$ and $u \subseteq \mrm{ran}(f_*)$.
\newline Notice that $u \not \subseteq Y_{\mathfrak{m}}$, so there is $y \in u \setminus Y_{\mathfrak{m}}$. 
Now, $y \in u \subseteq \mrm{dom}(f_{*})$. But we have:
$$\bar{h} \in I^{\mathfrak{m}} \; \Rightarrow \; \mrm{dom}(f_{\bar{h}}) \subseteq Y_{\mathfrak{m}} \; \Rightarrow \; y \notin \mrm{dom}(f_{\bar{h}}),$$
$$\bar{h} = \bar{g}^\frown(g) \; \Rightarrow \; \mrm{dom}(f_{\bar{h}}) = u_* \Rightarrow \; y \notin \mrm{dom}(f_{\bar{h}}),$$
so necessarily $\bar{h} = (\bar{g}^{-1})^\frown(g^{-1})$ and $f_{\bar{h}} = f^{-1}_*$. Now:
\begin{enumerate}[$(\cdot)$]
	\item W.l.o.g. $\mrm{dom}(\bar{a}_1) \subseteq \mrm{ran}(f_{\bar{g}})$.
\end{enumerate}
[Why? If $z \in \mrm{dom}(\bar{a}_1) \setminus \mrm{ran}(f_{\bar{g}})$, then $z \notin u$ so $z \notin \mrm{supp}_p(\bar{a})$ and $z \notin \mrm{dom}(\bar{a}_2)$, hence $a_z = a_{(1, z)} \in \mathbb{Q}_p$. Thus, $\bar{a}^*_1 = \bar{a}_1 \restriction (\mrm{dom}(\bar{a}_1) \cap \mrm{ran}(f_{\bar{g}})) \in \mathcal{A}^1_{\mathfrak{s}}$ and $(\bar{a}_1 + \bar{a}_2) \restriction u = (\bar{a}^*_1 + \bar{a}_2) \restriction u$, so we can replace $\bar{a}_1$  by $\bar{a}^*_1$, as $\mathfrak{m}$ satisfies clause (f).]
\newline Let $\bar{a}'_1 = \bar{a}_1^{[f_{\bar{g}}]} = \bar{a}_1^{[f_{*}]}$, this is well-defined, it belongs to $\mathcal{A}^1_{\mathfrak{s}}$ and has domain $\subseteq \mrm{dom}(f_*)$. Also, $\mrm{dom}(\bar{a}_2) \subseteq \mrm{ran}(f_*)$ and $\bar{a}_2 \in \mathcal{A}^2_{\mathfrak{s}}$, hence $\bar{a}'_2 = a_2^{[f_{*}]} \in \mathcal{A}^1_{\mathfrak{s}}$ and has domain $\subseteq \mrm{dom}(f_*)$. 
By \ref{hyp_C1+}(\ref{item17_new+})(e) and the above we have that $\bar{a}' = \bar{a}'_1 + \bar{a}'_2 \in \mathcal{A}^1_{\mathfrak{s}}$. Also, $\mrm{supp}_p(\bar{a}') \subseteq f^{-1}_*[u] \subseteq \mrm{dom}(\bar{a}'_1 + \bar{a}'_2)$, hence $\bar{a}' \restriction f^{-1}_*[u] \in \mathcal{A}^1_{\mathfrak{s}}$.  Hence:
$$\begin{array}{rcl}
\bar{a}^{[f_{\bar{h}}]} & = & \bar{a}^{[f_*]} \\
 & = & ((\bar{a}_1 + \bar{a}_2) \restriction u)^{[f^{-1}_*]} \\
 & = & (\bar{a}_1 + \bar{a}_2)^{[f_*]} \restriction {f^{-1}_*[u]} \\
 & = & (\bar{a}^{[f_*]}_1 + \bar{a}^{[f_*]}_2)\restriction {f^{-1}_*[u]} \\
 & = & (\bar{a}'_1 + \bar{a}'_2) \restriction {f^{-1}_*[u]} \\
  & = & \bar{a}' \restriction {f^{-1}_*[u]} \in \mathcal{A}^1_{\mathfrak{s}}.\\
\end{array}$$
\newline \underline{Case 3}. $u \subseteq Y_{\mathfrak{m}}$ and $\bar{h} = f^{\mathfrak{n}}_{\bar{g}^\frown(g)} = f_*$.
\newline In this case we have:
\begin{enumerate}[$(\cdot)$]
	\item W.l.o.g. $\mrm{dom}(\bar{a}_2) \subseteq \mrm{ran}(f_{\bar{g}})$.
\end{enumerate}
[Why? If $y \in \mrm{dom}(\bar{a}_2) \setminus \mrm{ran}(f_{\bar{g}})$, then (recalling $\mrm{dom}(\bar{a}_2) \setminus \mrm{ran}(f_{\bar{g}}) \subseteq f_*[u_*] \setminus \mrm{ran}(f_{\bar{g}}) \subseteq f_*[u_*] \setminus u$) we have that $y \notin u$ so $y \notin \mrm{supp}_p(\bar{a})$ and $y \notin \mrm{dom}(\bar{a}_1)$, hence $a_y = a_{(2, y)} \in \mathbb{Q}_p$. Thus, by \ref{hyp_C1+}(\ref{item17_new+})(d), $\bar{a}^*_2 = \bar{a}_2 \restriction (\mrm{dom}(\bar{a}_2) \cap \mrm{ran}(f_{\bar{g}})) \in \mathcal{A}^2_{\mathfrak{s}}$ and $(\bar{a}_1 + \bar{a}_2) \restriction u = (\bar{a}_1 + \bar{a}^*_2) \restriction u$, so we can replace $\bar{a}_2$  by $\bar{a}^*_2$, as $\mathfrak{m}$ satisfies clause (f).]
\newline Let $\bar{a}'_2 = \bar{a}_2^{[f_{\bar{g}}]} = \bar{a}_2^{[f_{*}]}$, this is well-defined, it belongs to $\mathcal{A}^1_{\mathfrak{s}}$ (by the definition of $\mathcal{A}^2_{\mathfrak{s}}$, recalling $\bar{a}_2 \in \mathcal{A}^2_{\mathfrak{s}}$). Also, $\bar{a}'_2$ has domain $\subseteq \mrm{dom}(f_*)$. Now, as $\mathfrak{m} \in \mrm{K}^{\mrm{bo}}_0(M)$, $f_{\bar{g}} \in I^{\mathfrak{m}}$ and $\bar{a}'_2 \in \mathcal{A}^1_{\mathfrak{s}} = \mathcal{A}^{\mathfrak{m}}_{\mathfrak{s}}$, recalling \ref{hyp_C1+}(\ref{item17_new+})(e), we have $\bar{a}_2 = (\bar{a}'_2)^{[f^{-1}_{\bar{g}}]} \in \mathcal{A}^1_{\mathfrak{s}}$, so as $\mathfrak{m} \in \mrm{K}^{\mrm{bo}}_0(M)$ we have $\bar{a}_1 + \bar{a}_2 \in \mathcal{A}^1_{\mathfrak{s}}$. Thus, as $\mathcal{A}^1_{\mathfrak{s}} \subseteq \mathcal{A}'$, we are done.
\newline \underline{Case 4}. $u \subseteq Y_{\mathfrak{m}}$ and $\bar{h} \in I_{\mathfrak{m}}$.
\newline Similar to Case 3.
\newline \underline{Case 5}. $u \subseteq Y_{\mathfrak{m}}$ and $\bar{h} = f^{\mathfrak{n}}_{(\bar{g}^{-1})^\frown(g^{-1})} = f^{-1}_*$.
\newline As $u \subseteq Y_{\mathfrak{m}}$ and $u \subseteq \mrm{dom}(f^{\mathfrak{n}}_{\bar{h}}) = \mrm{dom}(f^{-1}_*) = \mrm{ran}(f_*)$, necessarily we have:
$$u  \subseteq Y_{\mathfrak{m}} \cap \mrm{ran}(f_*) = \mrm{ran}(f_{\bar{g}}) \;\; \text{(cf. $(*)_{2.2}$(c))}.$$
As in earlier cases:
\begin{enumerate}[$(\cdot)$]
	\item W.l.o.g. $\mrm{dom}(\bar{a}_2) \subseteq \mrm{ran}(f_{\bar{g}})$.
\end{enumerate}
So we can finish as in Case 4.

\smallskip
\noindent Thus, indeed $\mathcal{A}^{\mathfrak{n}}_{\mathfrak{s}} \subseteq \mathcal{A}'$, by (g) of the definition of $\mathcal{A}^{\mathfrak{n}}_{\mathfrak{s}}$ in \ref{hyp_C1+}(\ref{item17_new+}) and $(*)_{2.7.5.1}$-$(*)_{2.7.5.4}$. Hence, we finished proving $(*)_{2.7.5}$.
\begin{enumerate}[$(*)_{2.7.6}$]
	\item If $\bar{a} \in \mathcal{A}'$, then $\mrm{supp}_p(\bar{a})$ is not a singleton.
\end{enumerate}
We prove $(*)_{2.7.6}$. Let $\bar{a} \in \mathcal{A}'$ and let $(\bar{a}_1, \bar{a}_2, u)$ be a witness of this. For $\ell = 1, 2$, let $\bar{a}'_\ell = \bar{a}_\ell \restriction \mrm{supp}_p(\bar{a}_\ell)$, then, by \ref{hyp_C1+}(\ref{item17_new+})(d), we have that $\bar{a}'_1 \in \mathcal{A}^1_{\mathfrak{s}} = \mathcal{A}^{\mathfrak{m}}_{\mathfrak{s}}$. 
Also, $\bar{a}^*_2 = \bar{a}^{[f_*]}_2 \in \mathcal{A}^1_{\mathfrak{s}}$,
by the definition of $\mathcal{A}^2_{\mathfrak{s}}$, and so, as $\mathfrak{m} \in \mrm{K}^{\mrm{bo}}_0(M)$, $\bar{a}^*_2 \restriction \{y \in X : f_*(y) \in \mrm{dom}(\bar{a}'_2)\} \in \mathcal{A}^1_{\mathfrak{s}}$. Clearly we have the following:
	$$$$
	$$\begin{array}{rcl}
\mrm{dom}_p(a^*_2) & := & \{y \in \mrm{dom}(\bar{a}^*_2) : \bar{a}^*_{(2, y)} \notin \mathbb{Q}_p \} \\
 & = & \{y \in \mrm{dom}(a^*_2) : a_{(2, f_*(y))} = a_{(2, y)} \notin \mathbb{Q}_p \}\\
 & = & \{f_*(y) : y \in \mrm{dom}_p(\bar{a}_2) = \{y \in \mrm{dom}(\bar{a}^*_2) : \bar{a}_{(2, y)} \notin \mathbb{Q}_p\}\}.
\end{array}$$
Thus, $\bar{a}^*_2 \restriction \mrm{dom}_p(\bar{a}^*_2) = (\bar{a}'_2)^{[f_*]}$. As $\bar{a}^*_2 = \bar{a}^{[f_*]}_2 \in \mathcal{A}^1_{\mathfrak{s}}$, recalling that $\mathcal{A}^1_{\mathfrak{s}} = \mathcal{A}^{\mathfrak{m}}_{\mathfrak{s}}$, by \ref{hyp_C1+}(\ref{item17_new+})(d), we have that $(\bar{a}'_2)^{[f_*]} = \bar{a}^*_2 \restriction \mrm{dom}_p(\bar{a}^*_2) \in \mathcal{A}^{\mathfrak{m}}_{\mathfrak{s}}$. Hence, by the definition of $\mathcal{A}^2_{\mathfrak{s}}$ (as $\bar{a} = \bar{b}^{[f_*]}$ iff $\bar{b} = \bar{a}^{[f^{-1}_*]}$), $\bar{a}'_2 \in \mathcal{A}^2_{\mathfrak{s}}$. So we have:
\begin{enumerate}[(a)]
	\item if $y \in \mrm{dom}(\bar{a}_1) \cap \mrm{dom}(\bar{a}_2)$, then:
	\begin{enumerate}[$(\cdot)$]
	\item $y \notin \mrm{supp}_p(\bar{a}_1)$ implies $y \in \mrm{supp}_p(\bar{a}_1 + \bar{a}_2)$ iff $y \in \mrm{supp}_p(\bar{a}_2)$;
	\item $y \notin \mrm{supp}_p(\bar{a}_2)$ implies $y \in \mrm{supp}_p(\bar{a}_1 + \bar{a}_2)$ iff $y \in \mrm{supp}_p(\bar{a}_1)$;	\end{enumerate}
	\item if $y \in \mrm{dom}(\bar{a}_1) \setminus \mrm{dom}(\bar{a}_2)$, then $y \in \mrm{supp}_p(\bar{a}_1)$ iff $y \in \mrm{supp}_p(\bar{a}_1 + \bar{a}_2)$;
	\item if $y \in \mrm{dom}(\bar{a}_2) \setminus \mrm{dom}(\bar{a}_1)$, then $y \in \mrm{supp}_p(\bar{a}_2)$ iff $y \in \mrm{supp}_p(\bar{a}_1 + \bar{a}_2)$.
\end{enumerate}
Hence:
\begin{enumerate}[$(*)_{2.7.6.1}$]
	\item W.l.o.g. $\bar{a} = \bar{a}_1 + \bar{a}_2$ and $\bar{a}_\ell = \bar{a}_\ell \restriction \mrm{supp}_p(\bar{a}_\ell)$, for $\ell = 1, 2$.
\end{enumerate}
[Why? Letting $u' = \mrm{dom}(\bar{a}'_1) \cup \mrm{dom}(\bar{a}'_2)$, we have:
\begin{enumerate}[(a)]
	\item $u' \subseteq u$;
	\item $\mrm{dom}(\bar{a}'_1)$, $\mrm{dom}(\bar{a}'_2) \subseteq u$;
	\item $\bar{a}'_1 + \bar{a}'_2 \restriction \mrm{supp}_p(\bar{a}'_1 + \bar{a}'_2) = \bar{a}_1 + \bar{a}_2 \restriction \mrm{supp}_p(\bar{a}_1 + \bar{a}_2)$.
\end{enumerate}
So $(*)_{2.7.6.1}$ holds indeed.]
\newline With $(*)_{2.7.6.1}$ in mind, we now get back to the proof of $(*)_{2.7.6}$.
\newline \underline{Case A}. $\mrm{supp}_p(\bar{a}_1) \not\subseteq \mrm{ran}(f_{\bar{g}})$ and $\mrm{supp}_p(\bar{a}_2) \not\subseteq \mrm{ran}(f_{\bar{g}})$.\newline As $\mrm{supp}_p(\bar{a}_1) \not\subseteq \mrm{ran}(f_{\bar{g}})$ we can choose $y_1 \in \mrm{supp}_p(\bar{a}_1) \setminus \mrm{ran}(f_{\bar{g}})$, and similarly we can choose $y_2 \in \mrm{supp}_p(\bar{a}_2) \setminus \mrm{ran}(f_{\bar{g}})$. Now $\mrm{dom}(\bar{a}_1) \subseteq Y_{\mathfrak{m}}$ and $\mrm{dom}(\bar{a}_2) \subseteq f_{*}[Y_{\mathfrak{m}}]$, hence $\mrm{dom}(\bar{a}_1) \cap \mrm{dom}(\bar{a}_2) \subseteq Y_{\mathfrak{m}} \cap f_{*}[Y_{\mathfrak{m}}] = \mrm{ran}(f_{\bar{g}})$ (recall $(*)_{2.2}$(c)), so necessarily $y_1 \notin \mrm{dom}(\bar{a}_2)$ and $y_2 \notin \mrm{dom}(\bar{a}_1)$ (by the choice of $y_1$ and $y_2$). Hence, letting $\bar{a} = (a_y : y \in u)$ and recalling the definition of $\bar{a} = \bar{a}_1 + \bar{a}_2$ from \ref{hyp_C1+}(\ref{item17_new_e}) we have:
\begin{enumerate}[$(\cdot)$]
	\item $y_1 \in \mrm{dom}(\bar{a}_1) \setminus \mrm{dom}(\bar{a}_2)$, so $a_{y_1} = a_{(1, y_1)}$;
	\item $y_2 \in \mrm{dom}(\bar{a}_2) \setminus \mrm{dom}(\bar{a}_1)$, so $a_{y_2} = a_{(2, y_2)}$.
\end{enumerate}
But $a_{(1, y_1)}, a_{(2, y_2)} \notin \mathbb{Q}_p$ (as $y_\ell \in \mrm{supp}_p(\bar{a}_\ell)$, for $\ell = 1, 2$) and so $a_{y_1}, a_{y_2} \notin \mathbb{Q}_p$, and, as obviously $y_1 \neq y_2$, we are done. This concludes the proof of Case A.
\newline \underline{Case B}. $\mrm{supp}_p(\bar{a}_2) \subseteq \mrm{ran}(f_{\bar{g}})$, equivalently, by $(*)_{2.7.6.1}$, $\mrm{dom}(\bar{a}_2) \subseteq \mrm{ran}(f_{\bar{g}})$.
\newline Define $\mathbf{y}'_2 = \{f_{*}^{-1}(\bar{y}) : \bar{y} \in \mathbf{y}_2 \}$, where, recalling $\bar{x}$ is from $\mathfrak{s}$ (cf. $(*)_{2.7.0}$), we let:
$$\mathbf{y}_2 = \{\bar{y} \in \bar{x}/E^{\mathfrak{n}}_k : \bar{y} \not\subseteq Y_{\mathfrak{m}} \text{ (so } \bar{y} \subseteq \mrm{ran}(f_{*}))\}.$$ 
Let now:
$$\bar{a}'_2 = (a'_{(2, y)} : y \in \mrm{set}(\mathbf{y}'_2)),$$
where:
	$$y \in \mrm{set}(\mathbf{y}'_2) \; \Rightarrow \; a'_{(2, y)} = a_{(2, f_{*}(y))}.$$
Now, we have:
\begin{enumerate}[$(\cdot_1)$]
	\item $\mathbf{y}'_2 \subseteq \bar{x}/E^{\mathfrak{m}}_k$;
	\item $\bar{a}'_2 \in \mathcal{A}^1_{\mathfrak{s}}$;
	\item $(\bar{a}'_2)^{[f_{*}]} = \bar{a}_2$;
	\item $\mrm{dom}(\bar{a}'_2) \subseteq \mrm{dom}(f_{\bar{g}})$;
	\item $\bar{a}'_2 \in \mathcal{A}^1_{\mathfrak{s}}$;
	\item $\bar{a}_2 \in \mathcal{A}^1_{\mathfrak{s}}$.
\end{enumerate}
[Why? Concerning $(\cdot_1)$, if $\bar{y}' \in \mathbf{y}'_2$, then by the choice of $\mathbf{y}'_2$, there is $\bar{y} \in \mathbf{y}_2$ such that $f^{-1}_*(\bar{y}) = \bar{y}'$. Furthermore, by the choice of $\mathbf{y}_2$ we have $\bar{y} \in \bar{x}/E^{\mathfrak{n}}_k$ and $\bar{y} \not\subseteq Y_{\mathfrak{m}} \text{ (so } \bar{y} \subseteq \mrm{ran}(f_{*}))$. By the definition of $E^{\mathfrak{n}}_k$ we have $\bar{y}' \in \bar{x}/E^{\mathfrak{n}}_k$. Thus, by $(*)_{2.6}$(viii), we have  
$\bar{y}' \in \bar{x}/E^{\mathfrak{m}}_k$, so $(\cdot_1)$ holds indeed. Also, $(\cdot_2)$ is by $(\cdot_1)$ and $(\cdot_3)$ is because we defined $\bar{a}'_2 = (a'_{(2, y)} : y \in \mrm{set}(\mathbf{y}'_2))$.
Moving to the remaining clauses, we have that $(\cdot_4)$ holds as $\mrm{supp}(\bar{a}_2) \subseteq \mrm{ran}(f_{\bar{g}})$. $(\cdot_5)$ holds by $(\cdot_3)$ and the fact that $\mathfrak{m} \in \mathrm{K}^{\mrm{bo}}_1(M)$. Finally, concerning $(\cdot_6)$, recalling that $f_{\bar{g}} \subseteq f_*$, by $(\cdot_3)$+$(\cdot_4)$ we have that $(\bar{a}'_2)^{[f_{\bar{g}}]} = \bar{a}_2$, and as $\bar{a}'_2 \in \mathcal{A}^1_{\mathfrak{s}} = \mathcal{A}^{\mathfrak{m}}_{\mathfrak{s}}$, by \ref{hyp_C1+}(\ref{item17_new+})(e) we have $\bar{a}_2 \in \mathcal{A}^1_{\mathfrak{s}} = \mathcal{A}^{\mathfrak{m}}_{\mathfrak{s}}$.]

\smallskip
\noindent
Let now $\bar{a}_* = \bar{a}_1 + \bar{a}_2 $, as each summand is in $\mathcal{A}^1_{\mathfrak{s}}$ (notice that the second summand is in $\mathcal{A}^1_{\mathfrak{s}}$ by $(\cdot_6)$), then also $\bar{a}_* \in \mathcal{A}^1_{\mathfrak{s}}$, recalling that $\mathcal{A}^1_{\mathfrak{s}} = \mathcal{A}^\mathfrak{m}_{\mathfrak{s}}$ and $\mathfrak{m}$ satisfies condition \ref{hyp_C1+}(\ref{item17_new})(e). Also, clearly $\bar{a} = \bar{a}_*$, but the latter belonging to $\mathcal{A}^1_{\mathfrak{s}}$ we have that $\mrm{supp}_p(\bar{a})$ is not a singleton, recalling that $\mathcal{A}^1_{\mathfrak{s}} = \mathcal{A}^\mathfrak{m}_{\mathfrak{s}}$ \mbox{and $\mathfrak{m}$ satisfies \ref{hyp_C1+}(\ref{item17_new+}).}
\newline \underline{Case C}. $\mrm{supp}_p(\bar{a}_1) \subseteq \mrm{ran}(f_{\bar{g}})$, equivalently, by $(*)_{2.7.6.1}$, $\mrm{dom}(\bar{a}_1) \subseteq \mrm{ran}(f_{\bar{g}})$.
\newline This case is similar to Case B.
Recalling $\bar{x}$ is from $\mathfrak{s}$ (cf. $(*)_{2.7.0}$), let:
$$\mathbf{y}_2 = \{\bar{y} \in \bar{x}/E^{\mathfrak{n}}_k : \bar{y} \not\subseteq Y_{\mathfrak{m}} \text{ (so } \bar{y} \subseteq \mrm{ran}(f_{*}))\}.$$ 
$$\mathbf{y}'_2 = \{f_{*}^{-1}(\bar{y}) : \bar{y} \in \mathbf{y}_2 \}.$$
$$\bar{a}'_2 = (a'_{(2, y)} : y \in \mrm{set}(\mathbf{y}'_2)),$$
where:
	$$y \in \mrm{set}(\mathbf{y}'_2) \; \Rightarrow \; a'_{(2, y)} = a_{(2, f_{*}(y))}.$$
Let now $Y_1 = \mrm{supp}_p(\bar{a}_1) \subseteq \mrm{ran}(f_{\bar{g}})$ and $Y'_1 = f^{-1}_{\bar{g}}(Y_1) \subseteq \mrm{dom}(f_{\bar{g}})$, then we let:
\begin{enumerate}[$(\cdot_a)$]
	\item $\bar{a}'_1 = (a'_{(1, y)} : y \in Y'_1)$, where:
	\item $a'_{(1, y)} = a_{(1, f_{\bar{g}}(y))}$;
	\item $Y_2 = \mrm{dom}(\bar{a}_2)$,  $Y'_2 = f_*^{-1}[Y_2] = f^{-1}_{\bar{g}}[Y_2]$;
	\item $\bar{a}'_2 = (a'_{(2, y)} : y \in Y'_2)$, where:
	\item $a'_{(2, y)} = a'_{(2, f_*(y))}$.
\end{enumerate}
Then:
\begin{enumerate}[$(\cdot_1)$]
	\item $\mathbf{y}'_2 \subseteq \bar{x}/E^{\mathfrak{m}}_k$ (recall that $\mathfrak{s} = (p, k, \bar{x}, \bar{q})$ and \ref{hyp_C1+}(\ref{item17_new+})(e));
	\item $\bar{a}'_2 \in \mathcal{A}^1_{\mathfrak{s}}$;
	\item $(\bar{a}'_1)^{[f^{-1}_{*}]} = \bar{a}_1$;
	\item $\mrm{dom}(\bar{a}'_1) \subseteq \mrm{dom}(f_{\bar{g}})$;
	\item $\bar{a}'_1 \in \mathcal{A}^1_{\mathfrak{s}}$;
	\item $(\bar{a}'_2)^{[f^{-}_{*}]} = \bar{a}_2$.
\end{enumerate}
Let now $\bar{a}_* = \bar{a}_1 + \bar{a}_2$ and let $\bar{a}'_* = \bar{a}'_1 + \bar{a}'_2 $, so that $(\bar{a}'_*)^{[f^{-1}_{*}]} = \bar{a}_*$. 
As $\bar{a}'_1 \in \mathcal{A}^1_{\mathfrak{s}}$ by $(\cdot_5)$ and $\bar{a}'_2 \in \mathcal{A}^1_{\mathfrak{s}}$ by $(\cdot_2)$, then by \ref{hyp_C1+}(\ref{item17_new})(e), also $\bar{a}'_* = \bar{a}'_1 + \bar{a}'_2 \in \mathcal{A}^1_{\mathfrak{s}}$, hence $\mrm{supp}_p(\bar{a}'_*)$ is not a singleton (as $\mathfrak{m} \in \mathrm{K}^{\mrm{bo}}_1(M)$) and so also $\mrm{supp}_p(\bar{a}_*)$ is not a singleton. 

\smallskip
\noindent So we finished proving  $(*_{2.7.6})$, i.e., $\bar{a} \in \mathcal{A}'$ implies that $\mrm{supp}_p(\bar{a})$ is not a singleton. Thus, we also finished proving $(*)_2$, as by $(*_{2.7.5})$ we have $\bar{a} \in \mathcal{A}^{\mathfrak{n}}_{\mathfrak{s}} \Rightarrow \bar{a} \in \mathcal{A}'$, and so by $(*_{2.7.6})$ we are done, i.e., we have verified that $\mathfrak{n}$ satisfies \ref{hyp_C1+}(\ref{item17_new+}).
\begin{enumerate}[$(*)_3$]
	\item We can choose an $<_\mrm{suc}$-increasing sequence $(\mathfrak{m}_\ell : \ell < \omega)$ in $\mathrm{K}^{\mrm{bo}}_0(M)$ whose limit $\mathfrak{m}$ is as wanted, i.e. $\mathfrak{m} \in \mathrm{K}^{\mrm{bo}}_2(M)$.
\end{enumerate}
We show this. We can find a list $(\bar{g}^\ell : \ell < \omega)$ of $\bigcup_{m < \omega} \mathcal{G}^m_*$ such that:
\begin{enumerate}[$(*)_{3.1}$]
	\item
	\begin{enumerate}[(i)]
	\item $\mrm{lg}(\bar{g}^\ell) \leq \ell$;
	\item if $\bar{g}^\ell \triangleleft \bar{g}^k$, then $\ell < k$;
	\item $\mrm{lg}(\bar{g}^\ell) = 0$ iff $\ell = 0$;
	\item note that for $\ell < \mrm{lg}(\bar{g})$, $g^k_\ell \neq (g^k_\ell)^{-1}$
	\item $\bar{g}^{2\ell+2} = (\bar{g}^{2\ell+1})^{-1}$;
	\item if $\mrm{lg}(\bar{g}^{2\ell+1}) > 1$, then there is a unique $i < \ell$ such that:
	\begin{enumerate}[$(\cdot_1)$]
	\item $\bar{g}^{2i+1} \triangleleft \bar{g}^{2\ell+2}$;
	\item $\bar{g}^{2i+2} \triangleleft \bar{g}^{2\ell+1}$;
	\item $\mrm{lg}(\bar{g}^{2\ell+1}) = \mrm{lg}(\bar{g}^{2\ell+2}) = \mrm{lg}(\bar{g}^{2i+1})+1 = \mrm{lg}(\bar{g}^{2i+2})+1$.
\end{enumerate}
\end{enumerate}
\end{enumerate}
Why we ask what we ask in $(*)_{3.1}$? Clause (i) is just for clarity. Clause (ii) is needed because defining $\mathfrak{m}_{k+1}$ we would like to ensure $\bar{g}_k \in I^{\mathfrak{m}_{k+1}}$, in the interesting case $\bar{g}_k \notin I^{\mathfrak{m}_{k}}$ but $\bar{g}' \triangleleft \bar{g}_k$ implies $f_{\bar{g}'} \subseteq f_{\bar{g}_k}$, so it makes sense to take care of $\bar{g}_k$ only after all the $\bar{g}' \triangleleft \bar{g}_k$ have been taken care of, but this means $\bar{g}' \triangleleft \bar{g}_k$ implies $\bar{g}' \in I^{\mathfrak{m}_{k}}$. Concerning clause (vi), the point is that in $(*)_2$ we only took care of having $\mrm{dom}(\bar{f}_{\bar{g}^\frown(g)})$ to be large enough, but not of $\mrm{ran}(\bar{f}_{\bar{g}^\frown(g)})$. But, by our bookkeeping, if $() \triangleleft \bar{g}_\ell \triangleleft \bar{g}_k$ and $\mrm{lg}(\bar{g}_\ell)+1 = \mrm{lg}(\bar{g}_k)$, then $k$ is odd if and only if $\ell$ is even. Hence if $k$ is odd, then choosing $\bar{g}_{k}$ will increase the domain of $f^{\mathfrak{m}_{k+1}}_{\bar{g}_k}$ to include $Y_{\mathfrak{m}_k} \cap X_{\mrm{dom}(\bar{g}_k)}$ (cf. $(*_2)$(d)), and if $\ell$ is odd, then choosing $\bar{g}_{\ell}$ will increase the domain of $f^{\mathfrak{m}_{\ell+1}}_{\bar{g}_\ell}$ to include $Y_{\mathfrak{m}_\ell} \cap X_{\mrm{dom}(\bar{g}_\ell)}$, but $Y_{\mathfrak{m}_\ell} \subseteq Y_{\mathfrak{m}_k}$, so always $Y_{\mathfrak{m}_\ell} \cap X_{\mrm{dom}(\bar{g}_\ell)} \subseteq \mrm{dom}(f^{\mathfrak{m}_{k+1}}_{\bar{g}_k})$. Mutatis mutandi we have that $Y_{\mathfrak{m}_\ell} \cap X_{\mrm{ran}(\bar{g}_\ell)} \subseteq \mrm{ran}(f^{\mathfrak{m}_{k+1}}_{\bar{g}_k})$. Clearly this suffices.

\smallskip
\noindent
Now, by induction on $\ell < \omega$, we choose $\mathfrak{m}_\ell \in \mrm{K}^{\mrm{bo}}_0$ such that $n(\mathfrak{m}_\ell) \leq \ell+1$ and $\mathfrak{m}_{\ell+1} \in \mrm{suc}(\mathfrak{m}_\ell)$ or $\mathfrak{m}_{\ell+1} = \mathfrak{m}_{\ell}$. We proceed as follows:
\begin{enumerate}[$(*)_{3.2}$]
	\item 
\begin{enumerate}[($\ell = 0$)]
	\item use $(*)_1$;
\end{enumerate}
\begin{enumerate}[($\ell = k+1$)]
	\item
	\begin{enumerate}[($\cdot_1$)]
	\item if $\bar{g}^{k+1} \in I^{\mathfrak{m}_k}$, then $\mathfrak{m}_\ell = \mathfrak{m}_k$ (if this occurs, then $k$ is odd);
	\item if $\bar{g}^{k+1} \notin I^{\mathfrak{m}_k}$, let $m_k = \mrm{lg}(\bar{g}^{k+1}) - 1$, so $\bar{g}^{k+1} \restriction m_k \in I^{\mathfrak{m}_k}$, and use $(*)_2$ with the pair $n(\mathfrak{m}_k)$, $\bar{g}^{2k+1}$ here standing for $n, \bar{g}^\frown (g)$ there.
	\end{enumerate}
\end{enumerate}
\end{enumerate}
Clearly $\mathfrak{m} = \mathrm{lim}_{\ell < \omega}(\mathfrak{m}_\ell) \in \mathrm{K}^{\mrm{bo}}_1(M)$. 
Notice that by $(*)_{3.1}$ we have:
\begin{enumerate}[$(*)_{3.3}$]
	\item if $\bar{g}^k \triangleleft \bar{g}^\ell \triangleleft \bar{g}^m$, then:
	\begin{enumerate}[(i)]
	\item $f_{\bar{g}^k} \subseteq f_{\bar{g}^\ell} \subseteq f_{\bar{g}^m}$;
	\item $Y_{\mathfrak{m}_k} \cap X_{\mrm{dom}(f_{\bar{g}^k})} \subseteq \mrm{dom}(f_{\bar{g}^m})$;
	\item $Y_{\mathfrak{m}_k} \cap X_{\mrm{ran}(f_{\bar{g}^k})} \subseteq \mrm{ran}(f_{\bar{g}^m})$;
	\item if $s \subseteq_1 \mrm{dom}(f_{\bar{g}^k})$, then $\mrm{min}(X'_s \setminus Y_{\mathfrak{m}_k}) \in \mrm{dom}(f_{\bar{g}^m})$ (see $(*)_{2.3}$(B)(c)$(\cdot_1)$);
	\item if $s \subseteq_1 \mrm{ran}(f_{\bar{g}^k})$, then $\mrm{min}(X'_s \setminus Y_{\mathfrak{m}_k}) \in \mrm{ran}(f_{\bar{g}^m})$ (see $(*)_{2.3}$(B)(c)$(\cdot_2)$).
\end{enumerate}
\end{enumerate}
Thus we are only left to show that $\mathfrak{m} \in \mathrm{K}^{\mrm{bo}}_1(M)$ is full, that this, that $\mathfrak{m}$ satisfies conditions (12), (13) from \ref{def_full_K_hop_bis}. For this notice:
\begin{enumerate}[(i)]
	\item Def.~\ref{def_full_K_hop_bis}(12) holds by the definition of $\mathfrak{m}_{k+1} \in \mrm{suc}_{\mathfrak{m}_k}$, recalling $(*)_{3.3}$(iv)(v);
	\item Def.~\ref{def_full_K_hop_bis}(13) holds as the $\bar{g}^\ell$'s list $\mathcal{G}_*$.
\end{enumerate}
\end{proof}

	\begin{corollary}\label{K2bo_non_empty++} $\mathrm{K}^{\mrm{bo}}_2(M) \neq \emptyset$.
\end{corollary}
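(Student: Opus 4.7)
My plan is to invoke Claim~\ref{K2bo_non_empty} to obtain a full $\mathfrak{m}' \in \mathrm{K}^{\mrm{bo}}_1(M)$ and then show that the reduct $\mathfrak{m} := (X^{\mathfrak{m}'}, \bar{X}^{\mathfrak{m}'}, \bar{f}^{\mathfrak{m}'}, \bar{E}^{\mathfrak{m}'})$, obtained by forgetting the auxiliary data $I^{\mathfrak{m}'}, \bar{I}^{\mathfrak{m}'}, Y_{\mathfrak{m}'}$, lies in $\mathrm{K}^{\mrm{bo}}_2(M)$. The point is that Definition~\ref{hyp_C1} is essentially a trimmed-down version of Definition~\ref{hyp_C1+} in which the index set $I$ has been replaced by all of $\mathcal{G}_*$, the witness set $Y$ has disappeared, and condition \ref{hyp_C1}(\ref{item_for_iso_ltr}) has been built in; all three of these are exactly what fullness (clauses (12)--(13) of Def.~\ref{def_full_K_hop_bis}) supplies.

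The verification proceeds clause by clause. Clauses \ref{hyp_C1}(\ref{hyp_C1_item1})--(3) are verbatim the corresponding clauses of Definition~\ref{hyp_C1+}, so they transfer with no work. For clause \ref{hyp_C1}(\ref{item7}), fullness via \ref{def_full_K_hop_bis}(\ref{I_equal_G}) gives $I^{\mathfrak{m}'} = \mathcal{G}_*$, so $\bar{f}^{\mathfrak{m}'} = (f_{\bar{g}} : \bar{g} \in \mathcal{G}_*)$, and sub-clauses (a)--(e) of \ref{hyp_C1}(\ref{item7}) are literally sub-clauses (a)--(e) of \ref{hyp_C1+}(\ref{item7+}). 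Clause \ref{hyp_C1}(\ref{extending_thef's}) is just clause (8) of \ref{hyp_C1+}, clause \ref{hyp_C1}(\ref{the_graph_Rn}) is clause \ref{hyp_C1+}(\ref{the_graph_Rn+}), and clause \ref{hyp_C1}(\ref{def_En}) is the $n>0$ part of clause \ref{hyp_C1+}(\ref{item12+})(a). Finally, clause \ref{hyp_C1}(\ref{item_for_iso_ltr}) is precisely the fullness condition \ref{def_full_K_hop_bis}(\ref{item_for_iso_ltr+}).

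The only clause that is not a simple textual inclusion is clause \ref{hyp_C1}(\ref{item17_new}), which is the $p$-adic singleton-support condition. For this one I would argue as follows. Given data $p, k, \bar{x}, \mathbf{y} = (\bar{y}^i : i < i_*), \bar{r}, (q_\ell : \ell < k)$ as in \ref{hyp_C1}(\ref{item17_new}), set $\mathfrak{s} = (p, k, \bar{x}, (q_\ell : \ell < k))$ and let $\bar{a} = \bar{a}_{(\mathbf{y}, \bar{r})}$ be the tuple defined by the same formula as there. By clause (c) of \ref{hyp_C1+}(\ref{item17_new+}) applied to $\mathfrak{m}'$ we have $\bar{a} \in \mathcal{A}^{\mathfrak{m}'}_{\mathfrak{s}}$, and then the singleton-support condition of \ref{hyp_C1+}(\ref{item17_new+}) applied to $\mathfrak{m}'$ says exactly that $\mathrm{supp}_p(\bar{a})$ is not a singleton, which is the conclusion demanded by \ref{hyp_C1}(\ref{item17_new}). (One must also observe that the indexing set $\mathrm{set}(\mathbf{y})$ is automatically a subset of $Y_{\mathfrak{m}'}$, since each $\bar{y}^i$ lies in $\bar{x}/E^{\mathfrak{m}'}_k$ and appears in some edge of $R^{\mathfrak{m}'}_k$, hence in some $f^{\mathfrak{m}'}_{\bar{g}}$, hence in $Y_{\mathfrak{m}'}$ by \ref{hyp_C1+}(\ref{item13c+}); this is needed in order for $\bar{a}$ to lie in $\mathcal{A}^{\mathfrak{m}'}_{\mathfrak{m}}$ at all.) No step here is really an obstacle; the content of the corollary is that the hard work of producing a full $\mathfrak{m}'$ was already carried out in Claim~\ref{K2bo_non_empty}, and the corollary itself is a bookkeeping reduct.
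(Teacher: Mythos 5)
Your proof is correct and is exactly the paper's intended argument: the paper's own proof reads ``obvious by \ref{K2bo_non_empty} simply comparing Def.~\ref{hyp_C1} and Def.~\ref{hyp_C1+}+\ref{def_full_K_hop_bis}'', and your clause-by-clause comparison (with fullness supplying $I = \mathcal{G}_*$ and condition \ref{hyp_C1}(\ref{item_for_iso_ltr})) is just that comparison written out. One hair worth splitting: when $\bar{x}/E^{\mathfrak{m}'}_k$ is a singleton your parenthetical justification that $\mrm{set}(\mathbf{y}) \subseteq Y_{\mathfrak{m}'}$ can fail (no edge of $R^{\mathfrak{m}'}_k$ need contain $\bar{x}$), but in that case $a_{(\mathbf{y},\bar{r})}(y) = r_{\bar{x}}q_\ell$ with $q_\ell \in \mathbb{Q}^{\odot}_p$, so the $p$-support is either empty or all of $\mrm{ran}(\bar{x})$, of size $k \geq 2$, and \ref{hyp_C1}(\ref{item17_new}) holds directly without appealing to \ref{hyp_C1+}(\ref{item17_new+}).
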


	\begin{proof} This is obvious by \ref{K2bo_non_empty} simply comparing Def.~\ref{hyp_C1} and Def.~\ref{hyp_C1+}+\ref{def_full_K_hop_bis}.
\end{proof}

\section{Borel Completeness of Torsion-Free Abelian Groups}\label{Borel_complete_section}

\subsection{The Definition of the Groups $G_{(1, \mathcal{U})}$}

	\begin{definition}\label{item_primes} Let $\mathrm{K}^{\mrm{bo}}_3(M)$ be the class of $\mathfrak{m} \in \mathrm{K}^{\mrm{bo}}_2(M)$ expanded with a sequence $\bar{p} = \bar{p}^\mathfrak{m}$ of prime numbers without repetitions such that we have the following:
	\begin{enumerate}[(1)]
		\item $\bar{p} = (p_{(e, \bar{q})} : e \in \mathrm{seq}_n(X)/E^\mathfrak{m}_n \text{ for some } 0 < n < \omega \text{ and } \bar{q} \in (\mathbb{Z}^+)^n )$;
	\item\label{item_primes(b)} for every $\ell < n$, $p \not \vert \; q_\ell$.
	\end{enumerate}
\end{definition}

	\begin{fact}\label{K2bo_non_empty+} Clearly every element of $\mathfrak{m} \in \mathrm{K}^{\mrm{bo}}_2(M)$ can be expanded to an element of $\mathfrak{m} \in \mathrm{K}^{\mrm{bo}}_3(M)$, and, as we showed in \ref{K2bo_non_empty++}, that $\mathrm{K}^{\mrm{bo}}_2(M) \neq \emptyset$ we have $\mathrm{K}^{\mrm{bo}}_3(M) \neq \emptyset$.
\end{fact}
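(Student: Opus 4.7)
The plan is to observe that the fact reduces to a straightforward counting/enumeration argument, using that the index set for the prime sequence $\bar{p}$ is countable while the supply of primes is infinite and each index forbids only finitely many of them.

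First I would fix $\mathfrak{m} \in \mathrm{K}^{\mrm{bo}}_2(M)$, which by Corollary~\ref{K2bo_non_empty++} exists, and describe the index set explicitly:
\[
\mathcal{I} \;=\; \bigl\{ (e,\bar{q}) : 0 < n < \omega,\; e \in \mathrm{seq}_n(X)/E^{\mathfrak{m}}_n,\; \bar{q} \in (\mathbb{Z}^+)^n \bigr\}.
\]
Since $X \subseteq \omega$ is countable by Definition~\ref{hyp_C1}(\ref{hyp_C1_item1}), each $\mathrm{seq}_n(X)$ is countable, hence so is each quotient $\mathrm{seq}_n(X)/E^{\mathfrak{m}}_n$; and $(\mathbb{Z}^+)^n$ is countable. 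Taking a countable union over $n$ gives that $\mathcal{I}$ is countable, so I can fix an enumeration $\mathcal{I} = \{ \iota_k : k < \omega \}$ (assuming without loss of generality $\mathcal{I}$ is infinite; if not, the construction is even easier).

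Next I would construct $\bar{p}$ by recursion on $k < \omega$. At stage $k$, with $\iota_k = (e,\bar{q})$ and $\bar{q} = (q_0,\dots,q_{n-1})$, the set of primes that divide some $q_\ell$ is finite, and by induction only finitely many primes $p_{\iota_j}$ ($j < k$) have already been used. Since there are infinitely many primes, I can choose $p_{\iota_k}$ to be any prime that is not among the previously chosen ones and does not divide any $q_\ell$; this simultaneously secures the "no repetitions" clause of Definition~\ref{item_primes} and condition~(\ref{item_primes(b)}). The resulting sequence $\bar{p} = (p_{(e,\bar{q})} : (e,\bar{q}) \in \mathcal{I})$ expands $\mathfrak{m}$ to an element of $\mathrm{K}^{\mrm{bo}}_3(M)$, proving both claims of the fact. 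There is no real obstacle here — this is a bookkeeping exercise enabled by the countability built into Definition~\ref{hyp_C1}(\ref{hyp_C1_item1}).
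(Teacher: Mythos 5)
Your proof is correct and fills in exactly the routine enumeration-and-recursion argument that the paper leaves implicit behind the word ``Clearly'': the index set is countable, each index excludes only finitely many primes, and there are infinitely many primes to choose from. No further comment is needed.
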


	We try to give some intuition on the group $G_1 = G_1[\mathfrak{m}]$ which we are about to introduce in \ref{def_G012_borel}. This group will be some sort of universal domain for our construction, and in fact all the $\mrm{TFAB}_\omega$'s which will be in the range of our Borel reduction from $\mathbf{K}^{\mrm{eq}}_2$ (cf. \ref{hyp_C0}) to $\mrm{TFAB}_\omega$ will be pure subgroups of this group $G_1$. The group $G_1$ naturally interpolates between $G_0 = \bigoplus \{ \mathbb{Z}x : x \in X\}$ and $G_2 = \bigoplus \{ \mathbb{Q}x : x \in X\}$, which have respectively the minimal and the maximal amount of divisibility possible.  Clearly, the groups $G_0$ and $G_2$ do not code anything of the universal countable model $M \in \mathbf{K}^{\mrm{eq}}_2$ (cf. \ref{hyp_C0}). Thus, we want to find a subgroup $G_0 \leq G_1 \leq G_2$ which does encode $M$. We do this adding divisibility conditions to $G_0$ which depend on the relation $E^{\mathfrak{m}}_n$ from \ref{hyp_C1}. So the first step is that for every $a \in G^+_0$ we choose a prime $p_a$ and require the following condition:
	$$G_0 \models a = \sum_{\ell < k} q_\ell x_\ell \neq 0 \; \Rightarrow \; G_1 \models p^{\infty}_a \vert \, a.$$
However, we want the partial permutations $f_{\bar{g}}$ of $X$ from \ref{hyp_C1} to induce partial automorphisms $\hat{f}^1_{\bar{g}}$ of our desired group $G_1$, and so we naturally demand:
$$\iota \in \{1, 2 \}, \; a_\iota = \sum_{\ell < k} q_\ell x^\iota_\ell, \; \bigwedge_{\ell < k} f_{\bar{g}}(x^1_\ell) = x^2_\ell \; \Rightarrow \; p_{a_1} = p_{a_2}.$$
Formally, this translates into a choice of $p_{(e, \bar{q})}$ as in \ref{item_primes}, where condition \ref{item_primes}(\ref{item_primes(b)}) is simply a useful technical requirement. We finally define our ``universal'' group $G_1$.

\begin{definition}\label{def_G012_borel} Let $\mathfrak{m} \in \mathrm{K}^{\mrm{bo}}_3(M)$.
	\begin{enumerate}[(1)]
	\item Let $G_2 = G_2[\mathfrak{m}]$ be $\bigoplus \{ \mathbb{Q}x : x \in X\}$.
	\item Let $G_0 = G_0[\mathfrak{m}]$ be the subgroup of $G_2$ generated by $X$, i.e. $\bigoplus \{ \mathbb{Z}x : x \in X\}$.
	\item\label{deff_G1} Let $G_1 = G_1[\mathfrak{m}]$ be the subgroup of $G_2$ generated by:
	\begin{enumerate}[(a)]
	\item $G_0$;
	\item\label{def_G012_borel_itemb} $p^{-m}(\sum_{\ell < n}q_\ell x_\ell)$, where:
	\begin{enumerate}[(i)]
	\item $0 < m < \omega$;
	\item $\bar{x} = (x_\ell : \ell < n) \in \mathrm{seq}_n(X)$, $e = \bar{x}/E^{\mathfrak{m}}_n$, $n > 0$;
	\item $\bar{q}$ is as in \ref{item_primes};
	\item $p = p_{(e, \bar{q})}$ (so a prime, recalling Definition~\ref{item_primes});
	\end{enumerate}
	\item\label{4.3(c)} [follows] for every $a \in G_1$ there are $i_* < \omega$ and, for $i < i_*$, $k_i$, $\bar{x}_i \in \mrm{seq}_{k_i}(X)$, $\bar{q}_i \in (\mathbb{Z}^+)^{k(i)}$, $e_i = \bar{x}_i/E^{\mathfrak{m}}_{k_i}$, $p_i = p_{(e_i, \bar{q}_i)}$ (hence $\bar{q}_i$ is as in \ref{item_primes}), $m(i) \geq 0$ and $r^i \in \mathbb{Z}^+$ such that the following condition holds:
	$$a = \sum \{p^{-m(i)}_i r^i q_{(i, \ell)} x_{(i, \ell)} : i < i_*, \ell < k_i \}.$$
	\end{enumerate}
	\item\label{deff_G1p} For a prime $p$, let $G_{(1, p)} = \{ a \in G_1 : a \text{ is divisible by $p^m$, for every $0 < m < \omega$}\}$ (notice that, by Observation~\ref{generalG1p_remark}, $G_{(1, p)}$ is always a pure subgroup of $G_1$).
	\item\label{G_U_pure} For $\mathcal{U} \subseteq M$, we let:
	$$G_{(1, \mathcal{U})}[\mathfrak{m}] = G_{(1, \mathcal{U})}[\mathfrak{m}(M)] = G_{(1, \mathcal{U})} = \langle y: y \in X_u, u \subseteq_{1} \mathcal{U} \rangle^*_{G_1} = \langle X_\mathcal{U} \rangle^*_{G_1}.$$
The notation $\mathfrak{m}(M)$ is from the second line of Def.~\ref{hyp_C1} and $X_\mathcal{U}$ is from \ref{hyp_C1}(\ref{def_XU}).
	\item\label{hatf_partial_auto} For $f_{\bar{g}} \in \bar{f}^{\mathfrak{m}}$ (cf. Definition~\ref{hyp_C1}(\ref{item7})), let $\hat{f}^2_{\bar{g}}$ be the unique partial automorphism of $G_2$ which is induced by $f_{\bar{g}}$ (see \ref{lemma1_Borel}(\ref{lemma1_Borel_2})), explicitly: if $k < \omega$ and for every $\ell < k$ we have that $y^1_\ell \in \mathrm{dom}(f_{\bar{g}})$, $y^2_\ell = f_{\bar{g}}(y^1_\ell)$, $q_\ell \in \mathbb{Q}^+$, then:
	$$a = \sum_{\ell < k} q_\ell y^1_\ell \in G_2 \; \Rightarrow \; \hat{f}^2_{\bar{g}}(a) = \sum_{\ell < k} q_\ell y^2_\ell.$$
	\item\label{def_G012_borel_item7} For $\ell \in \{0, 1\}$ we let $\hat{f}^2_{\bar{g}} \restriction G_\ell = \hat{f}^\ell_{\bar{g}}$ and $\hat{f}_{\bar{g}} = \hat{f}^1_{\bar{g}}$ (see \ref{lemma1_Borel}(\ref{lemma1_Borel_2})).
	\item\label{def_supp} For $i \in \{0, 1, 2\}$, $a = \sum_{\ell < m}q_\ell x_\ell \in G_i$, with $(x_\ell : \ell < k) \in \mrm{seq}_k(X)$ and $q_\ell \in \mathbb{Q}^+$, let $\mathrm{supp}(a) = \{ x_\ell : \ell < m \}$, i.e., when $a \in G^+_i$, $\mrm{supp}(a) \subseteq_\omega X$ is the smallest subset of $X$ such that $a \in \langle \mrm{supp}(a) \rangle^*_{G_i}$.
	\item For $p$ a prime and $a \in G^+_2$ we define the $p$-support of $a$, denoted as $\mrm{supp}_p(a)$, as: if $a = \sum \{q_\ell x_\ell : \ell < k\}$ with $(x_\ell : \ell < k) \in \mrm{seq}_k(X)$ and $q_\ell \in \mathbb{Q}^+$, then:
	$$\mrm{supp}_p(a) = \{x_\ell : \ell < k \text{ and } q_\ell \notin \mathbb{Q}_p\},$$
where we recall that $\mathbb{Q}_p$ was defined in \ref{Qp}.
	\end{enumerate}	 
\end{definition}

\begin{lemma}\label{lemma1_Borel} Let $\mathfrak{m} \in \mathrm{K}^{\mrm{bo}}_3$  and $\ell \in \{0, 1, 2\}$.
	\begin{enumerate}[(1)]
	\item $G_\ell[\mathfrak{m}] \in \mathrm{TFAB}$ and $|G_\ell[\mathfrak{m}]| = \aleph_0$.
	\item\label{lemma1_Borel_2}
	\begin{enumerate}
	\item $\hat{f}^2_{\bar{g}}$ is a partial automorphisms of $G_2[\mathfrak{m}]$ mapping $G_0[\mathfrak{m}]$ into itself;
	\item\label{item_hat1} $\hat{f}_{\bar{g}} = \hat{f}^1_{\bar{g}} = \hat{f}^2_{\bar{g}} \restriction G_{(1, \mathrm{dom}(\bar{g}))}$ (cf. Def.~\ref{def_G012_borel}(\ref{G_U_pure})(\ref{def_G012_borel_item7})), the map $\hat{f}_{\bar{g}}$ is a well-defined partial automorphism of $G_1$, and $\mathrm{dom}(\hat{f}_{\bar{g}})$ is a pure subgroup of $G_1[\mathfrak{m}]$, in fact $\mathrm{dom}(\hat{f}_{\bar{g}})$ is the pure closure in $G_1$ of $\mathrm{dom}(\hat{f}^0_{\bar{g}})$;
	\item $\hat{f}_{\bar{g}^{-1}} = \hat{f}^{-1}_{\bar{g}}$;
	\item $\bar{g}_1 \subseteq \bar{g}_2 \Rightarrow \hat{f}_{\bar{g}_1} \subseteq \hat{f}_{\bar{g}_2}$;
	\item $f_{\bar{g}} \subseteq \hat{f}^0_{\bar{g}} \subseteq \hat{f}^1_{\bar{g}} \subseteq \hat{f}^2_{\bar{g}}$.
	\end{enumerate}
	\item\label{G1p_pure} If $p = p_{(e, \bar{q})}$, $e \in \mathrm{seq}_n(X)/E^{\mathfrak{m}}_n$, $\bar{q} = (q_\ell : \ell < n)$ is as in \ref{item_primes}, and $n \geq 1$, then:
	\begin{enumerate}[(a)]
	\item\label{G1p_pure_1} $\langle \sum_{\ell < n} p^{-m} q_\ell y_\ell : m < \omega, \bar{y} \in e\rangle^*_{G_1} \leq G_{(1, p)}$;
	\item $G_1 \leq \langle \{ p^{-m} \sum_{\ell < n} q_\ell y_\ell : \bar{y} \in e \} \cup \mathbb{Q}_p G_0 \rangle_{G_2}$;
	\item if $a \in G_1$, then there are $k <\omega$, and, for $i < k$, $\bar{y}^i \in e$, $s_i \in \mathbb{Q}^+$ s.t.:
	\begin{enumerate}
	\item $a = \sum_{i < k} s_i(\sum_{\ell < n} q_\ell y^i_\ell) \;\;\mrm{mod}(\mathbb{Q}_pG_0 \cap G_1)$;
	\item  for all $i < k$, $s_i \sum_{\ell < n} q_\ell y^i_\ell \notin \mathbb{Q}_pG_0$, and $\ell < n$ implies $s_i q_\ell y^i_\ell \notin \mathbb{Q}_p G_0$;
	\item $s_i \sum \{q^i_\ell y^i_\ell : \ell < n\} \in G_1$.
	\end{enumerate}
\end{enumerate}
\item In \ref{lemma1_Borel}(\ref{G1p_pure}) we may add: $(\bar{y}^i : i < i_*)$ is with no repetitions.
\end{enumerate}
\end{lemma}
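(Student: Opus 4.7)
The strategy is to work throughout in the ambient countable torsion-free abelian group $G_2 = \bigoplus_{x \in X}\mathbb{Q}x$, within which $G_0 \leq G_1 \leq G_2$ sit as subgroups and $\hat{f}^2_{\bar{g}}$ is the $\mathbb{Q}$-linear bijection between the finite-dimensional $\mathbb{Q}$-subspaces spanned by $\mrm{dom}(f_{\bar{g}})$ and $\mrm{ran}(f_{\bar{g}})$. Part (1) is then immediate: subgroups of the countable torsion-free group $G_2$ are countable and torsion-free. Parts (2)(a), (c), (d), (e) are also immediate: a $\mathbb{Q}$-linear bijection sending a subset of $X$ to a subset of $X$ preserves the $\mathbb{Z}$-span $G_0$; the identities $\hat{f}_{\bar{g}^{-1}} = \hat{f}_{\bar{g}}^{-1}$ and $\bar{g}_1 \subseteq \bar{g}_2 \Rightarrow \hat{f}_{\bar{g}_1} \subseteq \hat{f}_{\bar{g}_2}$ transfer from the analogous properties of $f_{\bar{g}}$ recorded in \ref{hyp_C1}(\ref{item_7e_inverses}) and \ref{hyp_C1}(\ref{extending_thef's}); and the chain $f_{\bar{g}} \subseteq \hat{f}^0_{\bar{g}} \subseteq \hat{f}^1_{\bar{g}} \subseteq \hat{f}^2_{\bar{g}}$ is formal.

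The substantive content is (2)(b): showing that $\hat{f}^2_{\bar{g}}$ carries $G_1 \cap \mrm{dom}(\hat{f}^2_{\bar{g}})$ into $G_1$, and identifying this domain with $G_{(1,\mrm{dom}(\bar{g}))} \cap \mrm{dom}(\hat{f}^2_{\bar{g}})$, which also equals the pure closure in $G_1$ of $\mrm{dom}(\hat{f}^0_{\bar{g}})$. My plan is to take $a \in G_1 \cap \mrm{dom}(\hat{f}^2_{\bar{g}})$ and invoke the normal form in Definition \ref{def_G012_borel}(3)(c), writing $a = \sum_i p_i^{-m_i} r^i\sum_\ell q_{(i,\ell)}x_{(i,\ell)}$. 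The crucial point is that $f_{\bar{g}}$ preserves $E^\mathfrak{m}_n$-classes whenever its argument lies in $\mrm{dom}(f_{\bar{g}})$: by \ref{hyp_C1}(\ref{the_graph_Rn}), $(\bar{x}, f_{\bar{g}}(\bar{x}))$ is an edge of $R^\mathfrak{m}_n$, so by \ref{hyp_C1}(\ref{def_En}) the two tuples lie in the same class $e$, and since the prime $p_{(e,\bar{q})}$ from Definition \ref{item_primes} depends only on $(e,\bar{q})$, every generator $p^{-m}\sum_\ell q_\ell y_\ell$ of $G_1$ with $\bar{y} \subseteq \mrm{dom}(f_{\bar{g}})$ is sent to a legitimate generator of $G_1$ with the same prime. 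After a change of representation that clears denominators within the $\mathbb{Q}$-span of $\mrm{dom}(f_{\bar{g}})$ using torsion-freeness of $G_2$, one may assume each $x_{(i,\ell)}$ in the normal form lies in $\mrm{dom}(f_{\bar{g}})$, whence $\hat{f}^2_{\bar{g}}(a) \in G_1$. The domain identification is then automatic, since the $\mathbb{Q}$-span of $\mrm{dom}(f_{\bar{g}})$ intersected with $G_1$ coincides with the pure closure in $G_1$ of its $\mathbb{Z}$-span $\mrm{dom}(\hat{f}^0_{\bar{g}})$, and this is contained in $G_{(1,\mrm{dom}(\bar{g}))}$ because $\mrm{dom}(f_{\bar{g}}) \subseteq X_{\mrm{dom}(\bar{g})}$ by \ref{hyp_C1}(\ref{item_dom_f}).

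For (3), item (a) holds because each generator $p^{-m}\sum_\ell q_\ell y_\ell$ is infinitely $p$-divisible in $G_1$ (simply raise $m$), hence lies in $G_{(1,p)}$, which is pure in $G_1$ by Observation \ref{generalG1p_remark}; the pure closure of these generators therefore remains inside $G_{(1,p)}$. Item (b) uses that any generator $(p')^{-m'}\sum q'_\ell x'_\ell$ of $G_1$ with $p' \neq p$ has $(p')^{-m'} \in \mathbb{Q}_p$ and so lies in $\mathbb{Q}_p G_0$, while the injectivity of the assignment $(e,\bar{q}) \mapsto p_{(e,\bar{q})}$ built into Definition \ref{item_primes} forces every same-$p$ generator to share the fixed $(e,\bar{q})$. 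Item (c) then follows by expanding $a$ via Definition \ref{def_G012_borel}(3)(c), grouping summands according to $(e_i,\bar{q}_i)$, and absorbing into $\mathbb{Q}_p G_0$ every contribution whose prime differs from $p$. Part (4) is a routine bookkeeping simplification that consolidates repetitions. The main obstacle I anticipate is the change-of-representation step in (2)(b): one must ensure that any $a \in G_1 \cap \mrm{dom}(\hat{f}^2_{\bar{g}})$ admits a witnessing presentation in which all generators are supported inside $\mrm{dom}(f_{\bar{g}})$, and this is where the ``support not a singleton'' condition \ref{hyp_C1}(\ref{item17_new}) is likely to be indispensable, to exclude spurious contributions from $E^\mathfrak{m}_n$-equivalent tuples outside $\mrm{dom}(f_{\bar{g}})$ that cannot be absorbed back.
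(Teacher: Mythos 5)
Your proposal follows essentially the same route as the paper's proof: (2)(b) is reduced to the action on the generators of $G_1$ via the observation that $f_{\bar{g}}$ sends a tuple to an $E_k$-equivalent tuple (so the associated prime $p_{(e,\bar{q})}$ is unchanged), (3)(a) uses infinite $p$-divisibility of the generators together with purity of $G_{(1,p)}$, and (3)(b)(c) use the normal form of Definition~\ref{def_G012_borel}(\ref{4.3(c)}), isolate the summands with $p_i = p$ (which by the injectivity of $(e,\bar{q}) \mapsto p_{(e,\bar{q})}$ must have $\bar{y}^i \in e$ and $\bar{q}^i = \bar{q}$), and absorb everything else into $\mathbb{Q}_pG_0$. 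The only divergence is at the change-of-representation step you flag in (2)(b): the paper simply asserts the reduction to generators and does not invoke condition \ref{hyp_C1}(\ref{item17_new}) anywhere in this lemma, reserving that condition for the isomorphism analysis of Subsection~\ref{sub_ana_iso}.
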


	\begin{proof} Item (1) is clear. Concerning item (2), clause (a) holds as $f_{\bar{g}}$ is a partial $1$-to-$1$ function from $X$ to $X$; while for clause (b) it suffices to prove that given $\sum_{\ell < k} q_\ell y^1_\ell$ and $\sum_{\ell < k} q_\ell y^2_\ell$ as in Definition~\ref{def_G012_borel}(\ref{hatf_partial_auto}) we have that:
	$$\sum_{\ell < k} q_\ell y^1_\ell \in G_1 \Rightarrow \sum_{\ell < k} q_\ell y^2_\ell \in G_1.$$
In order to verify this it suffices to consider the case in which $a := \sum_{\ell < k} q_\ell y^1_\ell$ is one of the generators of $G_1$ from \ref{def_G012_borel}(\ref{deff_G1}). Thus, to conclude it suffices to notice that $f_{\bar{g}}$ maps $\bar{y}^1 = (y^1_\ell : \ell < k)$ to $\bar{y}^2 = (y^2_\ell : \ell < k)$, hence $\bar{y}^2 \in \bar{y}^1/E^{\mathfrak{m}}_k$ and recall \ref{def_G012_borel}(\ref{def_G012_borel_itemb}). This shows (2)(b). Finally, items (2)(c)-(e) are easy and so we omit details.

\smallskip
\noindent
Concerning item (3), if $\bar{y} \in e$ and $0 < m < \omega$, then $p^{-m} \sum_{\ell < k} q_\ell y_\ell$ is one of the generators of $G_1$, as this holds for every $0 < m < \omega$, it follows that $\sum_{\ell < k} p^{-m} q_\ell y_\ell \in G_{(1, p)}$, by the definition of $G_{(1, p)}$. As $G_{(1, p)}$ is a subgroup of $G_1$, for every $\bar{y} \in e$ we have that $\sum_{\ell < n} q_\ell y_\ell \in G_{(1, p)} \leq G_1$. Let $Z_{(e, \bar{q})} = \{ \sum_{\ell < n} q_\ell y_\ell : \bar{y} \in e\} \subseteq G_{(1, p)}$, then $\langle Z_{(e, \bar{q})} \rangle^*_{G_1} \leq G_{(1, p)}$, because by Definition~\ref{def_G012_borel}(\ref{deff_G1p}) we have that $G_{(1, p)}$ is a pure subgroup of $G_1$ (cf. Obs. \ref{generalG1p_remark}).
This proves (3)(a). 

\smallskip
\noindent Concerning (3)(b)(c), assume:
	\begin{enumerate}[$(*_1)$]
	\item $a \in G^+_1$.
	\end{enumerate}
	By \ref{def_G012_borel}(3)(c), we have:
	\begin{enumerate}[$(*_2)$]
	\item As $a \in G_{1}$ we can find:
	\begin{enumerate}[(a)]
	\item $i_* < \omega$;
	\item for $i < i_*$, $e_i = \bar{x}_i/E_{k_i}$, $\bar{x}_i \in \mrm{seq}_{k_i}(X)$, $\bar{q}^i = (q^i_\ell : \ell < k_i) \in (\mathbb{Z}^+)^{k_i}$;
	\item $r^i \in \mathbb{Z}^+$, $\bar{y}^i \in e_i$, $b_i = \sum_{\ell < k_i} q^i_\ell y^i_\ell \in G_0$;
	\item $p_i = p_{(e_i, \bar{q}^i)}$;
	\item $a = \sum_{i < i_*} p^{-m(i)}_i r^i b_i$, where $m(i) < \omega$;
	\item $(b_i : i < i_*)$ is with no repetitions;
	\item $p^{-m(i)}_i r^i b_i \in G_1$.
	\end{enumerate}
\end{enumerate}
Let now:
\begin{enumerate}[$(*_3)$]
	\item $V = \{i < i_* : p_i = p = p_{(e, \bar{q})} \text{ and } p^{-m(i)}_i r^ib_i \notin \mathbb{Q}_p G_0 \}$,
	\end{enumerate}
where we recall that the object $p_{(e, \bar{q})}$ is from the statement of lemma and in particular it is fixed. Notice also that if $i \in V$, then $(e, \bar{q}) = (e_i, \bar{q}^i)$. Hence, we have:
\begin{enumerate}[$(*_4)$]
	\item 
	\begin{enumerate}[(a)]
	\item if $i \in i_* \setminus V$, then $p^{-m(i)}_i r^i b_i \in \mathbb{Q}_pG_0$;
	\item $i \in V$ implies $\bar{y}^i \in e$ and $\bar{q}^i = \bar{q}$;
	\item if $i \in V$ and $\ell(1), \ell(2) < k$, then:
	$$p^{-m(i)}_i r^i q^i_{\ell(1)}\in \mathbb{Q}_p \; \Leftrightarrow \; p^{-m(i)}_i r^i q^i_{\ell(2)}\in \mathbb{Q}_p \; \Leftrightarrow \; p^{-m(i)}_i r^i b_i \in \mathbb{Q}_p G_0;$$
	\item if $i \in V$, then $p^{-m(i)}_i r^i b_i \notin \mathbb{Q}_p G_0$;
	\item if $i \in V$ and $\ell < k$, then $p^{-m(i)}_i r^i q^i_\ell y^i_\ell \notin \mathbb{Q}_p G_0$.
	\end{enumerate}
	\end{enumerate}
[Notice that in the first equivalence of $(*_4)$(c) we use: $\ell < k \Rightarrow q_\ell \in \mathbb{Z}^+, p \not\vert \; q_\ell$.]

\smallskip
\noindent
By $(*_4)$ we have:
\begin{enumerate}[$(*_5)$]
	\item 
	\begin{enumerate}[(a)]
	\item $a = \sum \{p^{-m(i)}_i r^i b_i : i \in V\} \; \mrm{mod}(\mathbb{Q}_pG_0 \cap G_1)$; 
	\item $i \in V$ implies $p^{-m(i)}_i r^i b_i \notin \mathbb{Q}_p G_0$.
	\end{enumerate}
	\end{enumerate}
So, defining $s_i$ as $p^{-m(i)} r^i$, we are done proving (3)(b)(c). Finally, (4) is easy.
\end{proof}


\begin{fact}\label{the_clear_fact} Assume that $\mathfrak{m} \in \mathrm{K}^{\mrm{bo}}_3(M)$, $\mathcal{U}, \mathcal{V} \subseteq M$ and $|\mathcal{U}| = |\mathcal{V}| = \aleph_0$. Suppose further that there is $h: M \restriction \mathcal{U} \cong M \restriction \mathcal{V}$. Then there is $\bar{g} = (g_k : k < \omega)$ such that:
	\begin{enumerate}[(a)]
	\item\label{the_clear_fact_a} for every $k < \omega$, $g_k \in \mathcal{G}$ (cf. \ref{hyp_C0}(\ref{hyp_C0_item2})); 
	\item\label{the_clear_fact_b} for every $k < \omega$, $g_k \subsetneq g_{k+1}$;
	\item\label{the_clear_fact_c} $\bigcup_{k < \omega} g_k: M \restriction \mathcal{U} \cong M \restriction \mathcal{V}$.
	\end{enumerate}
\end{fact}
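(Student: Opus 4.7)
The statement is essentially a routine unpacking of definitions: it says that any isomorphism between countable induced substructures of $M$ can be written as an ascending union of finite partial automorphisms packaged as elements of $\mathcal{G}$. So the plan is to produce the chain $\bar{g}$ by restricting $h$ to larger and larger finite subsets of $\mathcal{U}$, and to handle the small technicality that elements of $\mathcal{G}$ carry an extra tag $\iota \in \{0,1\}$ (introduced in \ref{hyp_C0}(\ref{hyp_C0_item2}) to force $g \neq g^{-1}$) and must be non-empty.

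First, since $|\mathcal{U}| = \aleph_0$, I would fix an enumeration $\mathcal{U} = \{a_k : k < \omega\}$ without repetitions, and correspondingly set $b_k = h(a_k)$, so that $\mathcal{V} = \{b_k : k < \omega\}$ is also enumerated without repetitions (as $h$ is a bijection). For $k < \omega$ define
\[
\mathbf{h}_k := h \restriction \{a_0, a_1, \ldots, a_k\}, \qquad g_k := (\mathbf{h}_k, 0).
\]
Each $\mathbf{h}_k$ is a finite non-empty partial function from $M$ to $M$. Because $h$ is an isomorphism $M \restriction \mathcal{U} \cong M \restriction \mathcal{V}$, for every finite $F \subseteq \mathcal{U}$ the restriction $h \restriction F$ induces an isomorphism $\langle F \rangle_M = M \restriction F \cong M \restriction h[F] = \langle h[F] \rangle_M$ (here I use that in the vocabulary of $\mathbf{K}^{\mathrm{eq}}$ from \ref{hyp_C0}(\ref{hyp_C0_item1}) the language consists only of the three equivalence relations $E_i$, so the substructure generated by a finite set is just that set). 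Thus $\mathbf{h}_k$ is a finite non-empty partial automorphism of $M$, and with $\iota_{g_k} = 0 \in \{0,1\}$ we obtain $g_k \in \mathcal{G}$, verifying (\ref{the_clear_fact_a}).

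For (\ref{the_clear_fact_b}), since the enumeration has no repetitions, $\mathbf{h}_k \subsetneq \mathbf{h}_{k+1}$; combined with the constant choice $\iota_{g_k} = \iota_{g_{k+1}} = 0$, this gives $g_k \subsetneq g_{k+1}$ in the sense of \ref{hyp_C0}(\ref{hyp_C0_item2})(B)(a)(d)(e). For (\ref{the_clear_fact_c}), we compute
\[
\bigcup_{k < \omega} \mathbf{h}_{g_k} = \bigcup_{k < \omega} h \restriction \{a_0, \ldots, a_k\} = h \restriction \mathcal{U} = h,
\]
which by hypothesis is an isomorphism $M \restriction \mathcal{U} \cong M \restriction \mathcal{V}$; interpreting $\bigcup_k g_k$ as $\bigcup_k \mathbf{h}_{g_k}$ (as in \ref{dom_barg_notation}), this is exactly the conclusion.

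There is essentially no obstacle here; the only points worth flagging are (i) choosing a single value of $\iota$ for every $g_k$, which is needed so that $g_k \subsetneq g_{k+1}$ actually holds, and (ii) remembering the convention that a ``partial automorphism of $M$'' means an isomorphism between the induced substructures on its domain and range, which is immediate once one notes that induced substructures in this relational vocabulary are just the underlying sets equipped with the restricted equivalence relations.
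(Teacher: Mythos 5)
Your proof is correct and follows essentially the same route as the paper: exhaust $h$ by a strictly increasing chain of finite non-empty restrictions, check these are partial automorphisms because the vocabulary is purely relational, and attach a fixed tag $\iota$ so that $g_k \subsetneq g_{k+1}$ holds in the sense of \ref{hyp_C0}(\ref{hyp_C0_item2}). The paper realizes the finite pieces as $h \cap (n_k \times n_k)$ rather than as restrictions to initial segments of an enumeration of $\mathcal{U}$, but this is an immaterial difference.
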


	\begin{proof} Let $h: M \restriction \mathcal{U} \cong M \restriction \mathcal{V}$. We can choose an increasing sequence $(n_k : k < \omega)$ such that $g_k = h \cap (n_k \times n_k)$ (pedantically $g = (h \cap (n_k \times n_k), 1)$ recalling \ref{hyp_C0}(\ref{hyp_C0_item2})) is strictly increasing and $\bigcup_{k < \omega} g_k = h$.
\end{proof}

	As mentioned, $G_1$ will be some sort of universal domain for our construction. This is reflected by the fact that instead of varying $M \in \mathbf{K}^{\mrm{eq}}$ in Definition~\ref{hyp_C1}, we fix $M$ to be the countable universal homogeneous model of $\mathbf{K}^{\mrm{eq}}$, and, for $\mathcal{U} \subseteq M$, we consider the substructure $M \restriction \mathcal{U}$ and the group $G_{(1, \mathcal{U})}$. We intend to show:
	$$M \restriction \mathcal{U} \cong M \restriction \mathcal{V} \; \Leftrightarrow \; G_{(1, \mathcal{U})}[\mathfrak{m}] \cong G_{(1, \mathcal{V})}[\mathfrak{m}].$$
The easy direction is course the left-to-right one, which we now establish:

	\begin{cclaim}\label{why_iso} Assume that $\mathfrak{m} \in \mathrm{K}^{\mrm{bo}}_3(M)$, $\mathcal{U},  \mathcal{V} \subseteq M$ and $|\mathcal{U}| = |\mathcal{V}| = \aleph_0$. Then:
	$$M \restriction \mathcal{U} \cong M \restriction \mathcal{V} \; \Rightarrow \; G_{(1, \mathcal{U})}[\mathfrak{m}] \cong G_{(1, \mathcal{V})}[\mathfrak{m}].$$
\end{cclaim}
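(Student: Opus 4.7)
The plan is to build the isomorphism $G_{(1,\mathcal{U})}[\mathfrak{m}] \cong G_{(1,\mathcal{V})}[\mathfrak{m}]$ as the union of an increasing chain of partial automorphisms $\hat{f}_{\bar{g}_k}$ of $G_1[\mathfrak{m}]$, where the $\bar{g}_k$'s come from approximating the given isomorphism $h \colon M \restriction \mathcal{U} \cong M \restriction \mathcal{V}$ by elements of $\mathcal{G}_*$.

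First I would apply Fact~\ref{the_clear_fact} to fix a sequence $(g_k : k < \omega)$ with $g_k \in \mathcal{G}$, $g_k \subsetneq g_{k+1}$, and $\bigcup_{k<\omega} g_k = h$. For each $k < \omega$ set $\bar{g}_k = (g_0, \ldots, g_{k-1}) \in \mathcal{G}_*^k$; since $\mathfrak{m} \in \mathrm{K}^{\mrm{bo}}_2(M)$ gives us $\bar{f} = (f_{\bar{g}} : \bar{g} \in \mathcal{G}_*)$ by Def.~\ref{hyp_C1}(\ref{item7}), the partial automorphisms $\hat{f}_{\bar{g}_k}$ of $G_1[\mathfrak{m}]$ are all defined (Lemma~\ref{lemma1_Borel}(\ref{lemma1_Borel_2})). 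By Lemma~\ref{lemma1_Borel}(\ref{lemma1_Borel_2})(d) and Def.~\ref{hyp_C1}(\ref{extending_thef's}), $\hat{f}_{\bar{g}_k} \subseteq \hat{f}_{\bar{g}_{k+1}}$, so I can set
\[
F \;=\; \bigcup_{k < \omega} \hat{f}_{\bar{g}_k},
\]
which is automatically an injective group homomorphism preserving the group operation wherever defined.

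The core step is to identify $\mrm{dom}(F) = G_{(1,\mathcal{U})}$ and $\mrm{ran}(F) = G_{(1,\mathcal{V})}$. Using condition \ref{hyp_C1}(\ref{item_for_iso_ltr}) (which $\mathfrak{m}$ satisfies as an element of $\mathrm{K}^{\mrm{bo}}_2(M)$) applied to the chain $(g_k)$, whose domains exhaust $\mathcal{U}$ and whose ranges exhaust $\mathcal{V}$, we obtain
\[
\bigcup_{k < \omega} \mrm{dom}(f_{\bar{g}_k}) = X_\mathcal{U}, \qquad \bigcup_{k < \omega} \mrm{ran}(f_{\bar{g}_k}) = X_\mathcal{V}.
\]
By Lemma~\ref{lemma1_Borel}(\ref{lemma1_Borel_2})(\ref{item_hat1}), each $\mrm{dom}(\hat{f}_{\bar{g}_k})$ is the pure closure in $G_1$ of $\langle \mrm{dom}(f_{\bar{g}_k}) \rangle_{G_0}$, and since $\mrm{dom}(f_{\bar{g}_k}) \subseteq X_\mathcal{U}$ this pure closure is contained in $\langle X_\mathcal{U}\rangle^*_{G_1} = G_{(1,\mathcal{U})}$. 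Conversely, a directed union of pure subgroups of a torsion-free abelian group is pure; hence $\bigcup_k \mrm{dom}(\hat{f}_{\bar{g}_k})$ is a pure subgroup of $G_1$ containing $X_\mathcal{U}$ and must therefore contain $G_{(1,\mathcal{U})}$. This gives $\mrm{dom}(F) = G_{(1,\mathcal{U})}$, and the dual argument (using $\bar{g}_k^{-1}$ and Def.~\ref{hyp_C1}(\ref{item7})(\ref{item_7e_inverses})) gives $\mrm{ran}(F) = G_{(1,\mathcal{V})}$. Thus $F \colon G_{(1,\mathcal{U})} \to G_{(1,\mathcal{V})}$ is a group isomorphism, as required.

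The main conceptual point—and the only place where anything needs to be checked carefully—is the passage from the equality $\bigcup_k \mrm{dom}(f_{\bar{g}_k}) = X_\mathcal{U}$ at the ``set level'' to the equality $\bigcup_k \mrm{dom}(\hat{f}_{\bar{g}_k}) = G_{(1,\mathcal{U})}$ at the ``pure-closure level''. This rests on the observation that pure closure commutes with directed unions in torsion-free abelian groups, together with the precise description of $\mrm{dom}(\hat{f}_{\bar{g}_k})$ provided by Lemma~\ref{lemma1_Borel}(\ref{lemma1_Borel_2})(\ref{item_hat1}). Everything else is book-keeping with the definitions in Section~\ref{S3}.
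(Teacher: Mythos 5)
Your proposal is correct and follows essentially the same route as the paper: approximate $h$ by a chain $(g_k)$ via Fact~\ref{the_clear_fact}, form $\bar{g}_k \in \mathcal{G}_*$, use \ref{hyp_C1}(\ref{item_for_iso_ltr}) to see the domains and ranges of the $f_{\bar{g}_k}$ exhaust $X_\mathcal{U}$ and $X_\mathcal{V}$, and take $\bigcup_k \hat{f}_{\bar{g}_k}$. Your explicit justification of the passage from the set-level equality to the pure-closure level (directed unions of pure subgroups are pure) is exactly the detail the paper leaves implicit in its final ``[Why? \dots]'' step.
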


	\begin{proof} Let $(g_k : k < \omega)$ be as in Fact~\ref{the_clear_fact}, $s_k = \mrm{dom}(g_k)$ and $t_k = \mrm{ran}(g_k)$. Then:
\begin{enumerate}[(i)]
\item for $k < \omega$, $\bar{g}_k = (g_\ell : \ell \leq k)$, so $\bar{g}_k \in \mathcal{G}^{k+1}_*$ (cf.~\ref{hyp_C0}(\ref{G_*}) and \ref{the_clear_fact}(\ref{the_clear_fact_a})(\ref{the_clear_fact_b}));
\item $\bigcup_{k < \omega} g_k: M \restriction \mathcal{U} \cong M \restriction \mathcal{V}$ (cf. \ref{the_clear_fact}(\ref{the_clear_fact_c}));
\item for every $k < \omega$ we have that $\bar{g}_k \in \mathcal{G}_*$ and so, by Definition~\ref{hyp_C1}(\ref{item7}),  $f_{\bar{g}_k} \in \bar{f}^{\mathfrak{m}}$.
\end{enumerate} 

\noindent Notice also that by \ref{hyp_C1}(\ref{item_for_iso_ltr}) we have:
\begin{enumerate}[$(\star_1)$]
\item 
\begin{itemize}
\item[(d)] $\bigcup_{k < \omega} \mrm{dom}(f_{\bar{g}_k}) = \bigcup_{k < \omega} X_{s_{k}} = X_{\mathcal{U}}$;
\item[(e)] $\bigcup_{k < \omega} \mrm{ran}(f_{\bar{g}_k}) = \bigcup_{k < \omega} X_{h[s_{k}]} = X_{\mathcal{V}}$.
\end{itemize} 
\end{enumerate} 

\noindent Hence, we have:
\begin{enumerate}[$(\star_2)$]
\item $\bigcup_{k < \omega} \hat{f}_{\bar{g}_k}$ is an isomorphism from $G_{(1, \mathcal{U})}$ onto $G_{(1, \mathcal{V})}$ (cf. Def.~\ref{def_G012_borel}(\ref{G_U_pure})(\ref{def_G012_borel_item7})).
\end{enumerate}
[Why? By \ref{def_G012_borel}(\ref{G_U_pure})(\ref{hatf_partial_auto})(\ref{def_G012_borel_item7}), \ref{lemma1_Borel}(\ref{item_hat1}) and \ref{hyp_C1}(\ref{item_for_iso_ltr}).]
\end{proof}

\subsection{Analyzing Isomorphism}\label{sub_ana_iso}

	Our aim in this subsection is to prove the converse of Claim~\ref{why_iso}.

	\begin{hypothesis}\label{hyp_non_iso} Throughout this subsection the following hypothesis holds:
	\begin{enumerate}[(1)]
	\item $\mathfrak{m} \in \mathrm{K}^{\mrm{bo}}_3(M)$;
	\item $\mathcal{U}, \mathcal{V} \subseteq M$;
	\item\label{U_infinite} $|\mathcal{U}| = \aleph_0 = |\mathcal{V}|$;
	\item\label{hyp_non_iso_item3} $\pi$ is an isomorphism from $G_{(1, \mathcal{U})}[\mathfrak{m}]$ onto $G_{(1, \mathcal{V})}[\mathfrak{m}]$.
	\end{enumerate}
\end{hypothesis}	

	Our aim in \ref{crucial_borel_lemma} and \ref{concl_sequence_rationals_Borel} below is to show that $\pi$ essentially comes from a bijection from $X_{\mathcal{U}}$ onto $X_{\mathcal{V}}$, which are respectively the bases of $G_{(1, \mathcal{U})}[\mathfrak{m}]$ and $G_{(1, \mathcal{V})}[\mathfrak{m}]$ (in the appropriate sense). At the bottom of this is the crucial algebraic condition \ref{hyp_C1}(\ref{item17_new}), which puts restrictions on the possible $p$-supports of certain members of $G_1$.

\begin{lemma}\label{crucial_borel_lemma} Let $a \in G_{(1, \mathcal{U})}[\mathfrak{m}] $ and let $b = \pi(a)$.
	\begin{enumerate}[(1)]
	\item\label{crucial_borel_lemma_item1} For a prime $p$, $a \in G_{(1, p)} \Leftrightarrow b \in G_{(1, p)}$;
	\item if $a = qx$, for some $q \in \mathbb{Q}^+$ and $x \in X_{\mathcal{U}}$, then for some $y \in X_{\mathcal{V}}$:
	\begin{enumerate}[(a)]
	\item\label{crucial_borel_lemma_a} $(x) E^{\mathfrak{m}}_1 (y)$;
	\item\label{crucial_borel_lemma_b} $b \in \mathbb{Q}y$, i.e. there exist $m_1, m_2 \in \mathbb{Z}^+$ such that $m_1b = m_2 y$.
\end{enumerate}	 
	\end{enumerate}
\end{lemma}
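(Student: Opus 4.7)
I would observe that $G_{(1,p)}$ is a characteristic subgroup of $G_1$, namely its infinitely $p$-divisible elements. Since $G_{(1,\mathcal{U})}$ is by definition the pure closure $\langle X_{\mathcal{U}}\rangle^*_{G_1}$ (Def.~\ref{def_G012_borel}(\ref{G_U_pure})), Observation~\ref{obs_pure_TFAB} tells me that an element of $G_{(1,\mathcal{U})}$ is infinitely $p$-divisible inside $G_1$ iff it is infinitely $p$-divisible inside $G_{(1,\mathcal{U})}$, and the analogous statement holds for $\mathcal{V}$. Thus ``lying in $G_{(1,p)}$'' is an intrinsic group-theoretic property of elements of $G_{(1,\mathcal{U})}$ and $G_{(1,\mathcal{V})}$, hence preserved by the isomorphism $\pi$, giving (1).

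\textbf{Plan for (2)(a).} I would expand $b=\sum_{i<n}r_iy_i$ in the $X$-basis of $G_2$, with the $y_i\in X_\mathcal{V}$ pairwise distinct and $r_i\in\mathbb{Q}\setminus\{0\}$, and set $e_x=(x)/E^{\mathfrak{m}}_1$. Suppose toward contradiction that some $y_{i_0}\notin e_x$. For any $q_0\in\mathbb{Z}^+$ with $p:=p_{(e_x,(q_0))}\nmid q_0$ (Def.~\ref{item_primes}), the generator $p^{-m}(q_0 x)\in G_1$ together with a B\'ezout combination in $\mathbb{Z}$ yields $p^{-m}x\in G_1$ for every $m$, so $x\in G_{(1,p)}$, whence $a=qx\in G_{(1,p)}$, and by (1) also $b\in G_{(1,p)}$. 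Thus $b/p^m\in G_1$ for every $m$. I would then expand $b/p^m=\sum_j c_j g_j$ with $c_j\in\mathbb{Z}$ and $g_j$ among the generators of Def.~\ref{def_G012_borel}(\ref{deff_G1}) and track the coefficient of $y_{i_0}$: only generators of the form $(p')^{-m'}\sum_\ell q'_\ell z_\ell$ with $p'=p$ can contribute with negative $p$-adic valuation to that coefficient, but by the injectivity of $\bar{p}$ in Def.~\ref{item_primes} any such generator lives over the class $e_x$ and therefore has support inside $e_x$, excluding $y_{i_0}$. Hence every contribution to $y_{i_0}$'s coefficient in $b/p^m$ lies in $\mathbb{Q}_p$, forcing $v_p(r_{i_0})\geq m$ for every $m$, so $r_{i_0}=0$---a contradiction. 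Therefore every $y_i$ lies in $e_x$, proving (a).

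\textbf{Plan for (2)(b).} Granted (a), I must rule out $n\geq 2$. Suppose for contradiction $n\geq 2$ and form $\bar{y}=(y_0,y_1)\in\mrm{seq}_2(X)$, setting $e=\bar{y}/E^{\mathfrak{m}}_2$. Using the fullness of $\mathfrak{m}$ (Def.~\ref{def_full_K_hop_bis}(\ref{item_for_iso_ltr+})) and $y_0, y_1 \in e_x$, I would produce $\bar{g}\in\mathcal{G}_*$ with $f_{\bar{g}}(y_0,y_1)=(x,z)$ for some $z\in X$, so that some tuple of $e$ contains $x$. A B\'ezout-type argument akin to (a) then shows that $x\in G_{(1,p)}$ for suitable primes $p=p_{(e,(q_0,q_1))}$, hence by (1) also $b\in G_{(1,p)}$. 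Applying Lemma~\ref{lemma1_Borel}(\ref{G1p_pure})(c) to decompose $b\equiv\sum_j s_j\sum_\ell q_\ell z^j_\ell\pmod{\mathbb{Q}_p G_0}$ with $\bar{z}^j\in e$ pairwise distinct, condition~\ref{hyp_C1}(\ref{item17_new}) forces the $p$-support of the right-hand side---equivalently that of $b$---to have cardinality different from $1$. Choosing $\bar{q}=(q_0,q_1)$ so that $p$ divides the denominator of exactly one of the $r_i$'s then reduces the $p$-support of $b=\sum_i r_iy_i$ to a singleton, contradicting condition~\ref{hyp_C1}(\ref{item17_new}).

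\textbf{Expected main obstacle.} The hardest point is the closing step of (b): simultaneously (i) securing a prime $p$ of the required form $p_{(e,\bar{q})}$ with $a\in G_{(1,p)}$ (so the transfer via (1) applies to $b$), and (ii) arranging the arithmetic of $p$ so that the $p$-support of $b$ in the $X$-basis becomes a singleton. This will require using together the injectivity of $\bar{p}$, the freedom in varying $\bar{q}$, the abundance of $E^{\mathfrak{m}}_k$-classes provided by the fullness of $\mathfrak{m}$, and the fixed denominators of $r_0,\dots,r_{n-1}$; it is the point where the full combinatorial power of Section~\ref{S3}---especially the singleton-avoidance axiom~\ref{hyp_C1}(\ref{item17_new})---must be deployed in concert with the divisibility and purity structure of the $G_{(1,\mathcal{U})}$'s.
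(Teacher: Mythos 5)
Your part (1) and your part (2)(a) are essentially sound. Part (1) is the paper's argument (the paper dismisses it as "obvious", and your purity remark via Observation~\ref{obs_pure_TFAB} is exactly the point that makes it so). Part (2)(a) is a legitimate reordering: the paper instead reduces (a) to (b) (its step $(*_0)$), using $p'=p_{(x/E^{\mathfrak{m}}_1,(1))}$ and the fact that $G_{(1,p')}$ is contained in $\langle x/E^{\mathfrak{m}}_1\rangle^*_{G_1}$; your coefficient-tracking argument proves that same containment in the forward direction, so every $y_i$ in the support of $b$ lies in $x/E^{\mathfrak{m}}_1$.

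The genuine gap is the closing step of (2)(b). You propose to derive the contradiction on the $b$-side by ``choosing $\bar q=(q_0,q_1)$ so that $p=p_{(e,\bar q)}$ divides the denominator of exactly one of the $r_i$'s'', so that $\mrm{supp}_p(b)$ becomes a singleton. This cannot be arranged: the sequence $\bar p$ of Definition~\ref{item_primes} is fixed in advance with pairwise distinct entries, and nothing in Definition~\ref{item_primes} lets you steer $p_{(e,\bar q)}$ onto a prescribed denominator; generically $p_{(e,\bar q)}$ divides none of the finitely many fixed denominators of $r_0,\dots,r_{n-1}$, so $\mrm{supp}_p(b)=\emptyset$ and no conflict with~\ref{hyp_C1}(\ref{item17_new}) arises. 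Rescaling $b$ by a power of $p$ does not help: it shifts all $p$-valuations equally, so you can make $\mrm{supp}_p(b)$ nonempty but not a singleton. The element whose $p$-support is genuinely a singleton is $a=qx$ itself, and the paper's proof runs the contradiction there: it picks the prime from the $b$-side so that $b\in G_{(1,p)}$ is automatic (write $q_*b=\sum_\ell q'_\ell y_\ell\in G_0$, set $e=\bar y/E^{\mathfrak m}_n$ and $p=p_{(e,\bar q')}$, so $q_*b$ is infinitely $p$-divisible by Definition~\ref{def_G012_borel}(\ref{def_G012_borel_itemb})), transfers to $a\in G_{(1,p)}$ by part (1), normalizes $a$ by a factor $q'\in p^{\mathbb Z}\cdot(\mathbb Z\setminus p\mathbb Z)$ so that $a\notin\mathbb{Q}_pG_0$, and then decomposes $a$ modulo $\mathbb{Q}_pG_0$ into $e$-tuples via Lemma~\ref{lemma1_Borel}(\ref{G1p_pure})(c); if $n\geq 2$, condition~\ref{hyp_C1}(\ref{item17_new}) forces the $p$-support of that decomposition to have size $\neq 1$, contradicting $\mrm{supp}_p(a)=\{x\}$. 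Your plan needs to be redirected to the domain side in this way. A secondary issue: your intermediate claim that some $\bar g\in\mathcal G_*$ maps $(y_0,y_1)$ to a pair containing $x$ is unjustified, since $E^{\mathfrak m}_2$ on pairs is in general strictly finer than the product of $E^{\mathfrak m}_1$-classes; but that step becomes unnecessary once the argument is run on the $a$-side.
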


	\begin{proof} Item (1) is obvious by Hypothesis~\ref{hyp_non_iso}(\ref{hyp_non_iso_item3}). Notice now that: 
	\begin{enumerate}[$(*_{0})$]
	\item It suffices to prove (2)(b).
\end{enumerate}
Why $(*_{0})$? Suppose that $b = \frac{m_2}{m_1}y$ and let $e' = (x)/E^{\mathfrak{m}}_1$ and $p' = p_{(e', (1))}$, then $x \in G_{(1, p')}$, but $a = qx$ and $a \in G_1$, hence $a \in G_{(1, p')}$. Now, applying (1) with $(a, b, p')$ here standing for $(a, b, p)$ there, we get that $b \in G_{(1, p')}$. As $b = \frac{m_2}{m_1}y \in G_1$, we have that $y \in G_{(1, p')}$ and thus:
\begin{enumerate}[$(\cdot)$]
	\item $G_1 \models (p')^\infty \vert \, x$ and $G_1 \models (p')^\infty \vert \, y$.
\end{enumerate}
Now, letting $H_{(p', 0)} = \langle x/E^{\mathfrak{m}}_1 \rangle_{G_0}$ and $H_{(p', 1)} = \langle x/E^{\mathfrak{m}}_1 \rangle^*_{G_1}$ we have that:
\begin{enumerate}[$(*_{0.1})$]
	\item 
	\begin{enumerate}[(i)]
	\item $G_0/H_{(p', 0)}$ is canonically $\cong$ to the direct sum of $\langle \mathbb{Z}y : y \in X \setminus x/E^{\mathfrak{m}}_1 \rangle$;
	\item $H_{(p', 1)} \cap G_0 = H_{(p', 0)}$;
	\item $G_1/H_{(p', 1)}$ naturally extends $G_0/H_{(p', 0)}$;
	\item no non-zero element of $G_1/H_{(p', 1)}$ is divisible by $(p')^\infty$.
\end{enumerate}
\end{enumerate}
Why $(*_{0.1})$? Straightforward or see a detailed proof of a more complicated case in \ref{lemma_pre_main_th}(\ref{G1p_pure_co-hop}). This concludes the proof of $(*_{0})$.

\smallskip
\noindent Coming back to the proof:
	\begin{enumerate}[$(*_{1})$]
	\item Let $n < \omega$, $\bar{y} \in \mathrm{seq}_n(X_{\mathcal{V}})$ and $\bar{q} \in (\mathbb{Q}^+)^n$ be such that $b = \sum \{q_\ell y_\ell : \ell < n \}$.
\end{enumerate}
Trivially, $n > 0$, we shall show that $n = 1$, i.e., that (2)(b) holds. To this extent: 
\begin{enumerate}[$(*_{1.1})$]
	\item Let $q_* \in \omega \setminus \{0 \}$ be such that:
\begin{enumerate}[$(\cdot_1)$]
	\item $b_1 := q_*b \in G_0[\mathfrak{m}]$;
	\item $q_* q_\ell \in \mathbb{Z}$;
	\item for every prime $p'$ we have $p' \, \vert \; (q_*q)$ implies $p' \, \vert \; (q_*q_\ell)$, for all $\ell < n$.
\end{enumerate} 
\end{enumerate}
Let $e = \bar{y}/E_n$, $q'_\ell = q_*q_\ell$ and $\bar{q}' = (q'_\ell : \ell < n)$, so that $q_* q_\ell y_\ell = q'_\ell y_\ell$ and $q'_\ell \in \mathbb{Z}^+$. Let $p = p_{(e, \bar{q}')}$ and let $b_1 = q_*b = \sum \{q'_\ell y_\ell : \ell < n \}$. Notice that we have:
	\begin{enumerate}[$(*_{2})$]
\item $\bigwedge_{\ell < k} p \not\vert \; q'_\ell$ and, for every $\ell < k$, $q'_\ell \in \mathbb{Z}^+ \subseteq \mathbb{Q}_p$.
\end{enumerate}
[Why? Because $p = p_{(e, \bar{q}')}$ has been chosen in \ref{item_primes} exactly in this manner.]
\newline Then we have:
	\begin{enumerate}[$(*_{3})$]
\item
\begin{enumerate}[(i)]
	\item $b \in G_{(1, p)}$;
	\item $a \in G_{(1, p)}$;
	\item if $m < \omega$, then $p^{-m} a \in G_{(1, p)} \leq G_1$.
\end{enumerate} 
\end{enumerate}
[Why (i)? By the choice of $p$ we have that $b_1 \in G_{(1, p)}$ (cf. Def.~\ref{def_G012_borel}(\ref{deff_G1})(\ref{deff_G1p})) and so, as $G_{(1, p)}$ is pure in $G_1$ (cf. Observation~\ref{generalG1p_remark}), 
$b_1 = q_*b$ and $q_* \in \mathbb{Z}$, we have $b \in G_{(1, p)}$ (cf. Observation~\ref{obs_pure_TFAB}).
Why (ii)? By (1) and (i), recalling Hyp.~\ref{hyp_non_iso}(\ref{hyp_non_iso_item3}). Lastly, (iii) is immediate: by the definition of $G_1$ and of $G_{(1, p)}$ (Definition~\ref{def_G012_borel}(\ref{deff_G1})(\ref{deff_G1p})).]
	\begin{enumerate}[$(*_4)$]
	\item W.l.o.g. $a = qx \notin \mathbb{Q}_p G_0$ and $pa \in G_0$.
	\end{enumerate}
We prove $(*_4)$. Let $a' = p^{-1} q_* a$, $b' = p^{-1} q_* b$ and $q' = p^{-1} q_*$. So by $(*_3)$ we have that $a', b' \in G_1$ and of course $\pi(a') = b'$. Now, by the choice of $b'$ and $q_*$ (cf. in particular $(*_{1.1})(\cdot_3)$) we have that $pb' \in G_{(0, \mathcal{V})}$, hence $pa' = \pi^{-1}(pb') \in G_{(0, \mathcal{U})}$. Notice that $a' \not\in G_{(0, \mathcal{U})}$, as $a' \notin \mathbb{Q}_pG_0$ because $b' \notin \mathbb{Q}_pG_0$, since from $(*_2)$ above, $\bigwedge_{\ell < k} p \not\vert \; q'_\ell$. Noticing that $(a', b', q'_*, b_1, p, \bar{q}')$ satisfies all the demands of $(a, b, q_*, b_1, p, \bar{q}')$ (including $(*_3)$), it follows that:
	\begin{enumerate}[$(*_{4.1})$]
	\item 
	\begin{enumerate}[(a)]
	\item replacing $(a, q, b)$ with $(a', q', b')$ we can assume that $a = qx \notin \mathbb{Q}_pG_0$;
	\item if $b'$ belongs to $\mathbb{Q}y$ for some $y \in X_{\mathcal{V}}$, the the conclusion of (2) is satisfied.
\end{enumerate}
\end{enumerate}
This concludes the proof of $(*_4)$.


\smallskip
\noindent
Now, by \ref{lemma1_Borel}(3), there are $k <\omega$, and, for $i < k$, $\bar{y}^i \in \bar{y}/E_n$ and $r_i \in \mathbb{Q}^+$ such that:
\begin{enumerate}[$(*_5)$]
	\item 
	\begin{enumerate}
	\item $q x = a = \sum_{i < k} r_i(\sum_{\ell < n} q'_\ell y^i_\ell) = \sum_{i < k} (\sum_{\ell < n} r_i q'_\ell y^i_\ell) \;\; \mrm{mod}(\mathbb{Q}_pG_0 \cap G_1)$;
	\item $r_i \sum_{\ell < n} q'_\ell y^i_\ell \in G_1$ and $r_i  q'_\ell \notin \mathbb{Q}_p$.
\end{enumerate}
\end{enumerate}
By $(*_4)$,  $a = qx \notin \mathbb{Q}_pG_0$, and so clearly $k > 0$. It suffices to prove that $k = 1$, which by $(*_5)$ implies that $n = 1$, i.e., there is $y \in X_{\mathcal{V}}$ such that $b \in \mathbb{Q}y$. Why does it follow that $n = 1$? As otherwise the LHS of $(*_5)$(a) has $p$-support a singleton but the RHS of $(*_5)$(a) has $p$-support of size at least two, a contradiction.

\smallskip
\noindent So toward contradiction assume that $k \geq 2$. Recalling $(*_4)$ notice that:
\begin{enumerate}[$(*_6)$]
	\item $qx = a = \sum_{\ell < n} (\sum_{i < k} r_i q'_\ell y^i_\ell) \;\; \mrm{mod}(\mathbb{Q}_pG_0 \cap G_1)$;
\end{enumerate}
Now, let $Z = \{y^i_\ell : i < i_*, \ell < k\}$ and, for $y \in Z$, let:
$$a_y = \sum \{r_i q'_\ell : i < i_*, \ell < k, y^i_\ell = y\}.$$
So, by $(*_6)$ we have:
\begin{enumerate}[$(*_7)$]
	\item $qx = \sum \{a_y y : y \in Z\} \;\; \mrm{mod}(\mathbb{Q}_pG_0 \cap G_1)$.
\end{enumerate}
Now, as for the sake of contradiction we are assuming that $k \geq 2$, recalling that by $(*_{2})$ we have that $q'_\ell \in \mathbb{Z}^+ \subseteq \mathbb{Q}_p$, by \ref{hyp_C1}(\ref{item17_new}), we have the following:
\begin{enumerate}[$(*_8)$]
	\item $\mrm{supp}_p(\sum_{y \in Y} a_y y) = \{y \in Y : a_y \notin \mathbb{Q}_p \}$ is not a singleton.
\end{enumerate}
Now recall that, by $(*_4)$, $qx = a \notin \mathbb{Q}_pG_0$, hence $\mrm{supp}_p(qx) = \{ x \}$, so it is a singleton. By $(*_8)$, the RHS of $(*_7)$ has a non-singleton $p$-support whereas the LHS of $(*_7)$ has $p$-support a singleton, a contradiction. Hence, we are done proving~(2).
\end{proof}

\begin{conclusion}\label{concl_sequence_rationals_Borel} \begin{enumerate}[(1)]
	\item\label{concl_sequence_rationals_Borel_item1} There is a sequence $(q^1_x : x \in X_{\mathcal{U}})$ of non-zero rationals and a function $\pi_1 : X_{\mathcal{U}} \rightarrow X_{\mathcal{V}}$ such that for every $x \in X_\mathcal{U}$ we have that:
	$$\pi(x) = q^1_x(\pi_1(x)) \;\; \text{ and } \;\; \pi_1(x) \in x/E^{\mathfrak{m}}_1.$$
	\item There is a sequence $(q^2_x : x \in X_{\mathcal{V}})$ of non-zero rationals and a function $\pi_2 : X_\mathcal{V} \rightarrow X_\mathcal{U}$ such that:
	$$\pi^{-1}(x) = q^2_x(\pi_2(x)).$$
	\item \begin{enumerate}[(i)]
	\item $\pi_2 \circ \pi_1 : X_\mathcal{U} \rightarrow X_\mathcal{U} = id_{\mathcal{U}}$;
	\item $\pi_1 \circ \pi_2 : X_\mathcal{V} \rightarrow X_\mathcal{V} = id_{\mathcal{V}}$;
	\item $\pi_1 : X_\mathcal{U} \rightarrow X_\mathcal{V}$ is a bijection.
	\end{enumerate}
	\end{enumerate}
\end{conclusion}

	\begin{proof} (1) is by \ref{crucial_borel_lemma}, we elaborate. To this extent, let $R = \{(x, y) : x, y \in X \text{ and } \pi(x) \in \mathbb{Q}^+y \}$. Now, we have:
\begin{enumerate}[$(*_1)$]
\item For all $x \in X_\mathcal{U}$ there is $y \in X_\mathcal{V}$ such that $R(x, y)$.
\end{enumerate} 
[Why? By \ref{crucial_borel_lemma}(\ref{crucial_borel_lemma_b}) there is $y \in X_\mathcal{V}$ such that $\pi(x) \in \mathbb{Q}y$, as $\pi$ is an automorphism, necessarily $\pi(x) \neq 0$ and so $\pi(x) \in \mathbb{Q}^+y$.]
\begin{enumerate}[$(*_2)$]
\item If $x \in X_\mathcal{U}$ and $(x, y_1), (x, y_2) \in R$, then $y_1 = y_2$.
\end{enumerate} 
[Why? By the definition of $R$, there are $q_1, q_2 \in \mathbb{Q}^+$ such that $q_1 y_1 = \pi(x) = q_2 y_2$. As $q_1, q_2 \neq 0$, necessarily $q_1 = q_2$ and $y_1 = y_2$.]
\newline Together, $R$ is the graph of a function which we call $\pi_1$. Lastly, $\pi_1(x) \in x/E^{\mathfrak{m}}_1$ by \ref{crucial_borel_lemma}(\ref{crucial_borel_lemma_a}). Thus we proved (1).

\noindent (2) is by part (1) applied to $\pi^{-1}$ (and $\mathcal{V}$, $\mathcal{U}$).

\noindent (3) is by (1) and~(2). Why? E.g., for (i) we have that:
$$\pi^{-1} \circ \pi(x) = \pi^{-1}(q^1_x(\pi_1(x))) = q^2_{\pi_1(x)} q^1_x(\pi_2 \circ \pi_1(x)) = x,$$ 
which implies that $\pi_2 \circ \pi_1(x) = x$. (ii) is similar and (iii) follows from (i)+(ii).
\end{proof}

	Our aim in the subsequent claims is to lift the $1$-to-$1$ mapping from $X_{\mathcal{U}}$ onto $X_{\mathcal{U}}$ defined in  \ref{concl_sequence_rationals_Borel} to an isomorphism from $M \restriction \mathcal{U}$ onto $M \restriction \mathcal{V}$. We recall that the equivalence relations $\mathfrak{E}^M_i$ (for $i \in \{0, 1, 2\}$) were defined in \ref{hyp_C0}. We intend to show that our mapping $\pi_1$ and $\pi^{-1}_1 = \pi_2$ preserve them (and so also their negations). This is done introducing some auxiliary equivalence relations $\mathcal{E}_i$ (for $i \in \{0, 1, 2\}$) on $X$ which reflect (to some extent) the equivalence relations $\mathfrak{E}^M_i$ on $M$.

	\begin{definition} For $i < 3$, let:
	$$\mathcal{E}_i = \{ (x, y) : \text{ for some } (a, b) \in \mathfrak{E}^M_i, x \in X'_{\{a\}} \text{ and } y \in X'_{\{b\}}\},$$
where we recall that $\mathfrak{E}^M_i$ was introduced in \ref{hyp_C0}.
\end{definition}

	\begin{cclaim}\label{the_hammer_claim}
	\begin{enumerate}[(1)]
	\item If $(y_0, y_1) \in (x_0, x_1)/E^\mathfrak{m}_2$, $x_0, x_1, y_0, y_1 \in X$ and $i < 3$, then:
	$$x_0 \mathcal{E}_i x_1 \Leftrightarrow y_0 \mathcal{E}_i y_1.$$
	\item\label{the_hammer_claim_item2} The mapping $\pi_1$ from \ref{concl_sequence_rationals_Borel} preserves $\mathcal{E}_i$ and its negation, for all $i < 3$.
	\end{enumerate}
\end{cclaim}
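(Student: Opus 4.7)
My overall strategy is to prove (1) directly from the definition of $E^{\mathfrak{m}}_2$ as the transitive closure of $R^{\mathfrak{m}}_2$, and then reduce (2) to (1) by establishing that $(\pi_1(x_0),\pi_1(x_1))$ lies in the $E^{\mathfrak{m}}_2$-class of $(x_0,x_1)$ (possibly after swapping coordinates). Part (1) is structural/combinatorial, while part (2) uses Lemma~\ref{lemma1_Borel} and Hypothesis~\ref{hyp_C1}(\ref{item17_new}) to translate the preservation of $G_{(1,p)}$ under $\pi$ into the desired $E^{\mathfrak{m}}_2$-relation.

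For \emph{part (1)}, by transitivity and symmetry of $E^{\mathfrak{m}}_2$ it suffices to treat a single $R^{\mathfrak{m}}_2$-edge. So assume $(x_0,x_1)R^{\mathfrak{m}}_2(y_0,y_1)$, witnessed by some $\bar{g}\in\mathcal{G}_*$ with $f_{\bar{g}}(x_j)=y_j$ for $j\in\{0,1\}$. Writing $x_j\in X'_{\{a_j\}}$ and $y_j\in X'_{\{b_j\}}$, conditions \ref{itemf} and \ref{item7d} of Definition~\ref{hyp_C1} force $\bar{g}(a_j)=b_j$. Since $\bar{g}$ is a partial automorphism of the $\{E_0,E_1,E_2\}$-structure $M$ (Hypothesis~\ref{hyp_C0}(\ref{hyp_C0_item2})), we have $a_0 E^M_i a_1\Leftrightarrow \bar{g}(a_0)E^M_i\bar{g}(a_1)\Leftrightarrow b_0 E^M_i b_1$, which by the definition of $\mathcal{E}_i$ gives $x_0\mathcal{E}_i x_1\Leftrightarrow y_0\mathcal{E}_i y_1$ as required.

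For \emph{part (2)}, fix $x_0\neq x_1$ in $X_{\mathcal{U}}$ (the diagonal case being immediate) and set $e=(x_0,x_1)/E^{\mathfrak{m}}_2$. For any $\bar{q}'=(q'_0,q'_1)\in(\mathbb{Z}^+)^2$ and $p=p_{(e,\bar{q}')}$ as in Definition~\ref{item_primes}, the element $q'_0x_0+q'_1x_1$ lies in $G_{(1,p)}$, hence by Lemma~\ref{crucial_borel_lemma}(\ref{crucial_borel_lemma_item1}) so does
$$b\;=\;\pi(q'_0x_0+q'_1x_1)\;=\;q'_0q^1_{x_0}\,\pi_1(x_0)+q'_1q^1_{x_1}\,\pi_1(x_1).$$
Applying Lemma~\ref{lemma1_Borel}(\ref{G1p_pure}) to $b$ (with the nonrepetition refinement of part (4)) yields a representation $b=\sum_{i<k}r_i(q'_0y^i_0+q'_1y^i_1)$ mod $\mathbb{Q}_pG_0\cap G_1$, with $\bar{y}^i\in e$ and $r_iq'_\ell\notin\mathbb{Q}_p$. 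Hypothesis~\ref{hyp_C1}(\ref{item17_new}) forces the formal $p$-support of the right-hand side to have cardinality $\neq 1$. To make the $p$-support of $b$ itself equal $\{\pi_1(x_0),\pi_1(x_1)\}$ (and so nonempty of size $2$), I replace the element $q'_0x_0+q'_1x_1$ by $p^{-m}(q'_0x_0+q'_1x_1)$ for $m$ sufficiently large; the resulting image has coefficients of $\pi_1(x_0),\pi_1(x_1)$ with $p$-adic valuation going to $-\infty$, so eventually both fall outside $\mathbb{Q}_p$, regardless of $q^1_{x_0},q^1_{x_1}$. Matching supports forces $\{\pi_1(x_0),\pi_1(x_1)\}\subseteq\bigcup_{i<k}\{y^i_0,y^i_1\}$.

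The main obstacle is the last mile: showing both $\pi_1(x_0)$ and $\pi_1(x_1)$ appear as entries of a \emph{single} $\bar{y}^{i_*}\in e$, so that $\bar{y}^{i_*}=(\pi_1(x_0),\pi_1(x_1))$ or $(\pi_1(x_1),\pi_1(x_0))$, whence $(\pi_1(x_0),\pi_1(x_1))\in e\cup(x_1,x_0)/E^{\mathfrak{m}}_2$. To force this, I plan to vary $\bar{q}'$ (and hence the prime $p=p_{(e,\bar{q}')}$, which changes by the nonrepetition clause of Definition~\ref{item_primes}), obtaining simultaneous representations mod $\mathbb{Q}_{p'}G_0$ and mod $\mathbb{Q}_{p''}G_0$; then compare the coefficients of $\pi_1(x_0)$ and $\pi_1(x_1)$ in $b$ coming from the two expansions. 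The asymmetry between the coefficients $r_iq'_0$ and $r_iq'_1$ (when $q'_0\neq q'_1$) distinguishes the case where $\pi_1(x_0),\pi_1(x_1)$ sit in the same $\bar{y}^i$ from the case where they are split between two different $\bar{y}^i$'s, eventually ruling out the split case. Once this coincidence is established, part (1) applied to $e$ (plus symmetry of each $E^M_i$, which neutralizes the possible coordinate-swap) gives the preservation of each $\mathcal{E}_i$ and $\neg\mathcal{E}_i$ by $\pi_1$.
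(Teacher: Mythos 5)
Part (1) of your proposal is correct and is essentially the paper's argument: reduce to a single $R^{\mathfrak{m}}_2$-edge and use Definition~\ref{hyp_C1}(\ref{itemf})(\ref{item7d}) together with the fact that $\bar{g}$ is a partial automorphism of $M$.

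Part (2) has a genuine gap at exactly the step you yourself flag as ``the last mile.'' You reduce (2) to the claim that $\pi_1(x_0)$ and $\pi_1(x_1)$ occur as the two entries of a \emph{single} tuple $\bar{y}^{i_*}\in(x_0,x_1)/E^{\mathfrak{m}}_2$, i.e.\ that $(\pi_1(x_0),\pi_1(x_1))$ is $E^{\mathfrak{m}}_2$-equivalent to $(x_0,x_1)$ up to a swap. Nothing you have written proves this: Hypothesis~\ref{hyp_C1}(\ref{item17_new}) only says the $p$-support of such a combination is not a singleton, which is perfectly compatible with $\pi_1(x_0)$ and $\pi_1(x_1)$ being split between two distinct tuples $\bar{y}^{i_1}\neq\bar{y}^{i_2}$. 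Your proposed fix --- varying $\bar{q}'$ and comparing the two expansions --- is only a plan, and it faces a structural obstacle: changing $\bar{q}'$ changes the prime $p=p_{(e,\bar{q}')}$, so the two representations hold modulo different subgroups $\mathbb{Q}_{p'}G_0$ and $\mathbb{Q}_{p''}G_0$ and there is no common quotient in which to ``compare coefficients.'' As it stands the decisive step is unproved.

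The paper avoids this entirely by proving something weaker that still suffices. Since $x_0\,\mathcal{E}_i\,x_1$, part (1) gives that each tuple $(x_j,y_j)$ in the expansion has both entries in a single $\mathcal{E}_i$-class $X_{s_j/E^M_i}$. Grouping the summands by these classes yields elements $c_\ell$ with pairwise disjoint supports, each supported inside one $\mathcal{E}_i$-class; Hypothesis~\ref{hyp_C1}(\ref{item17_new}) applied to each group separately gives $|\mrm{supp}_p(c_\ell)|\geq 2$ whenever $c_\ell\notin\mathbb{Q}_pG_0$, and since the left-hand side has $p$-support of size exactly $2$, exactly one group is nontrivial and its $p$-support is $\{\pi_1(x_0),\pi_1(x_1)\}$. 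Hence both images lie in one $\mathcal{E}_i$-class --- which is all that (2) asserts --- with no need to locate them in a common $E^{\mathfrak{m}}_2$-tuple. I recommend you replace your ``last mile'' by this grouping argument.
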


	\begin{proof} (1) Suppose that $(y_0, y_1) \in (x_0, x_1)/E^\mathfrak{m}_2$. Then it is enough to prove:
	\begin{enumerate}[$(\star_1)$]
	\item If $\bar{g} \in \mathcal{G}_*$, $f_{\bar{g}}(x_\ell) = y_\ell$, for $\ell = 0, 1$, then $x_0 \mathcal{E}_i x_1 \Leftrightarrow y_0 \mathcal{E}_i y_1$. 
	\end{enumerate}
For $\ell = 0, 1$, let $x_\ell \in X'_{s_\ell}$, for $s_\ell \subseteq_1 M$ and $y_\ell \in X'_{t_\ell}$, for $t_\ell \subseteq_1 M$. Now, as $f_{\bar{g}}(x_\ell) = y_\ell$, by \ref{hyp_C1}(4)(d) we have that $\bar{g}[s_\ell] = t_\ell$. So $\bar{g}(s_0, s_1) = (t_0, t_1)$, and so, as $\bar{g} \in \mathcal{G}_*$ we have that $s_0 \mathfrak{E}_i s_1 \Leftrightarrow t_0 \mathfrak{E}^M_i t_1$. This implies $x_0 \mathcal{E}_i x_1 \Leftrightarrow y_0 \mathcal{E}_i y_1$. 

\smallskip
\noindent
Concerning (2). Using also $\pi_2, \mathcal{V}, \mathcal{U}$ it suffices to prove that for $x, y \in X_{\mathcal{U}}$ we have:
$$x \mathcal{E}_i y \Rightarrow \pi_1(x) \mathcal{E}_i \pi_1(y).$$ 
To this extent, suppose that $x \mathcal{E}_i y$ and let $s \subseteq_1 \mathcal{U}$ be such that $x, y \in X_{s/\mathfrak{E}^M_i}$ (as $s/\mathfrak{E}^M_i \subseteq M$, we are using \ref{hyp_C1}(\ref{def_XU}) to give meaning to the expression $X_{s/\mathfrak{E}^M_i}$). If $x = y$, then the conclusion is trivial, so we assume that $x \neq y$. 
\begin{enumerate}[$(\star_{1.1})$]
	\item Let $e = (x, y)/E^{\mathfrak{m}}_2$, $\bar{q} = (1, 1)$ and $p = p_{(e, \bar{q})}$. 
\end{enumerate}
Now, we claim:
	\begin{enumerate}[$(\star_{1.2})$]
	\item There is $0 < m < \omega$ be such that $p^{-m} q_x \pi_1(x) + p^{-m} q_y \pi_1(y) \notin \mathbb{Q}_p G_0 \cap G_1$.
\end{enumerate}
[Why? First of all, as $q_x, q_y \in \mathbb{Q}^+$ and $x \neq y \Rightarrow \pi_1(x) \neq \pi_1(y)$ and $\pi(x + y) \in G_1$, we have that $0 \neq a = q_x \pi_1(x) + q_y \pi_1(y) \in G_1$ and so we are done, recalling that by the definition of $\mathbb{Q}_p$ we have that for every $b \in G^+_1$ there is $m < \omega$ s.t. $p^{-m} b \notin \mathbb{Q}_p G_0$.]
\newline So fix an $m < \omega$ as in $(\star_{1.2})$. Now, by the choice of $p$, we have that $p^{-m}(x+y) \in G_{(1, p)} \leq G_1$ and so we have that the following is satisfied:
$$p^{-m} q_x \pi_1(x) + p^{-m} q_y \pi_1(y) = p^{-m} \pi(x) + p^{-m} \pi(y) = \pi(p^{-m} (x+y)) \in G_{(1, p)}.$$
So, by \ref{lemma1_Borel}(\ref{G1p_pure}) applied with $((x, y)/E^{\mathfrak{m}}_2, (1, 1), p, p^{-m} q_x \pi_1(x) + p^{-m} q_y \pi_1(y))$ standing for $(e, \bar{q}, p_{(e, \bar{q})}, a)$ there, there are $(x_j, y_j) \in (x, y)/E^{\mathfrak{m}}_2$ and $r_j \in \mathbb{Q}^+$, for $j < j_*$, s.t.:
	\begin{enumerate}[$(\star_2)$]
	\item \begin{enumerate}[(a)]
	\item $((x_j, y_j) : j < j_*)$ is with no repetitions;
	\item $p^{-m} q_x \pi_1(x) + p^{-m} q_y \pi_1(y) = \sum_{j < j_*} r_j (x_j + y_j) \;\; \mrm{mod}(\mathbb{Q}_p G_0 \cap G_1)$.
	\end{enumerate}
	\end{enumerate}
Now, by (1), recalling $x \mathcal{E}_i y$, for $j < j_*$, there are $s_j \subseteq_1 M$ such that $x_j, y_j \in X_{s_j/\mathfrak{E}^M_i}$.
Next, by $(\star_{1.2})$, the LHS of $(\star_2)$(b) is not in $\mathbb{Q}_p G_0 \cap G_1$, so the same happens for the RHS of $(\star_2)$(b), hence, necessarily, $\{s_j/ \mathfrak{E}^M_i : j < j_*\} \neq \emptyset$ (i.e., $j_* \geq 1$), let  $(t_\ell/ \mathfrak{E}^M_i : \ell < \ell_*)$ list it without repetitions, with $t_\ell \in \{s_j : j < j_*\}$ for each $\ell < \ell_*$. Let then:
$$u_\ell = \{j < j_* : s_j/ \mathfrak{E}^M_i = t_\ell/ \mathfrak{E}^M_i\}.$$
So we have:
	\begin{enumerate}[$(\star_3)$]
	\item $p^{-m} q_x \pi_1(x) + p^{-m} q_y \pi_1(y) = \sum_{\ell < \ell_*} \sum_{j \in u_\ell} r_j (x_j+y_j) \;\; \mrm{mod}(\mathbb{Q}_p G_0 \cap G_1)$.
\end{enumerate}
Now, for $\ell < \ell_*$, let $c_\ell = \sum_{j \in u_\ell} r_j (x_j+y_j)$, then:
\begin{enumerate}[$(\star_4)$]
	\item $p^{-m} q_x \pi_1(x) + p^{-m} q_y \pi_1(y) = \sum_{\ell < \ell_*} c_\ell \;\; \mrm{mod}(\mathbb{Q}_p G_0 \cap G_1)$.
\end{enumerate}
\begin{enumerate}[$(\star_5)$]
	\item $(\mrm{supp}_p(c_\ell) : \ell < \ell_*)$ is a sequence of pairwise disjoint sets.
\end{enumerate}
[Why? As $\mrm{supp}(c_\ell) \subseteq X_{t_\ell/\mathfrak{E}^M_i}$, recalling the $t_\ell/\mathfrak{E}^M_i$'s are with no repetitions.]
\begin{enumerate}[$(\star_6)$]
	\item If $c_\ell \notin \mathbb{Q}_p G_0$, then $|\mrm{supp}_p(c_\ell)| \geq 2$.
\end{enumerate}
[Why? Recall that $c_\ell = \sum_{j \in u_\ell} r_j (x_j+y_j)$ and let $Y_\ell = \bigcup \{ \{x_j, y_j\} : j \in u_\ell\}$ and, for $z \in Y_\ell$, let $a_z = \sum \{r_j : j \in u_\ell, \,  x_j = z \} + \sum \{r_j : j \in u_\ell, \, y_j = z \}$. Now we can apply \ref{hyp_C1}(\ref{item17_new}) with $(p, 2, (x, y), ((x_j, y_j) : j \in u_\ell), (1, 1), (r_j : j \in u_\ell), (a_z : z \in Y_\ell))$ here standing for $(p, k, \bar{x}, \mathbf{y}, \bar{r}, \bar{a}_{(\mathbf{y}, r)})$ there and get $|\{z \in Y_\ell : a_z \notin \mathbb{Q}_p\}| \neq 1$. But this means that $|\mrm{supp}_p(c_\ell)| \neq 1$, but $|\mrm{supp}_p(c_\ell)| \neq 0$ as $c_\ell \notin \mathbb{Q}_p G_0$, hence $|\mrm{supp}_p(c_\ell)| \geq 2$, as promised. This concludes the proof of $(\star_6)$.]
\begin{enumerate}[$(\star_7)$]
	\item $V = \{\ell < \ell_* : c_\ell \notin \mathbb{Q}_p G_0\}$ has exactly one member.
\end{enumerate}
[Why? If $V = \emptyset$, then the RHS of $(\star_4)$ is in $\mathbb{Q}_p G_0$ but not the LHS, recalling $(\star_5)$ and the choice of $m < \omega$ in $(\star_{1.2})$, a contradiction. On the other hand, if $|V| \geq 2$, then the RHS of $(\star_3)$ has $p$-support of size $\sum_{\ell < \ell_*} |\mrm{supp}_p(c_\ell)| \geq 2|V| > 2$, but the $p$-support of the LHS of $(\star_3)$ has cardinality $2$, a contradiction.]
\newline Let $k$ be the unique member of $V$, then we have the following:
$$\begin{array}{rcl}
\{\pi_1(x), \pi_1(y)\}    & = & \mrm{supp}_p(p^{-m}q_x \pi_1(x) + p^{-m}q_y \pi_1(y)) \\
		& = & \mrm{supp}_p(\sum_{\ell < \ell_*} c_\ell) \\
		& = & \mrm{supp}_p(c_k) \subseteq X_{t_k/\mathfrak{E}^M_i}.
\end{array}$$
So $\pi_1(x), \pi_1(y) \in X_{t_k/\mathfrak{E}^M_i}$ and as $X_{t_k/\mathfrak{E}^M_i}$ is an $\mathcal{E}_i$-equivalence class (by the definition of $\mathcal{E}_i$), then $\pi_1(x) \mathcal{E}_i \pi_1(y)$, as desired. This concludes the proof of the claim.
\end{proof}

	\begin{cclaim} There is a bijection $h: \mathcal{U} \rightarrow \mathcal{V}$ preserving $\mathfrak{E}^M_i$ and $\neg \mathfrak{E}^M_i$, \mbox{for all $i < 3$.}
\end{cclaim}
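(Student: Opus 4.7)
The heavy lifting is already done in Conclusion~\ref{concl_sequence_rationals_Borel} and Claim~\ref{the_hammer_claim}: we have a bijection $\pi_1 : X_\mathcal{U} \to X_\mathcal{V}$ which preserves each $\mathcal{E}_i$ together with its negation, for $i < 3$. My plan is simply to descend from $X$ to $M$ using the distinguished partition $(X'_s : s \subseteq_1 M)$.

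First I would use that $E^M_2$ is equality (Hypothesis~\ref{hyp_C0}(\ref{hyp_C0_item1})) to identify the $\mathcal{E}_2$-equivalence classes inside $X$ as precisely the atomic blocks $X'_{\{a\}}$, $a \in M$. Since $\pi_1$ preserves $\mathcal{E}_2$ and its negation, it must map the partition $\{X'_{\{a\}} : a \in \mathcal{U}\}$ of $X_\mathcal{U}$ bijectively onto the partition $\{X'_{\{b\}} : b \in \mathcal{V}\}$ of $X_\mathcal{V}$; that is, for every $a \in \mathcal{U}$ there is a unique $b \in \mathcal{V}$ with $\pi_1[X'_{\{a\}}] = X'_{\{b\}}$. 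Define $h : \mathcal{U} \to \mathcal{V}$ by $h(a) = b$. The inverse of $h$ is obtained from $\pi_2 = \pi_1^{-1}$ in the same way (applying Conclusion~\ref{concl_sequence_rationals_Borel} and Claim~\ref{the_hammer_claim} in the symmetric role), so $h$ is a bijection.

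Finally I would verify preservation of $E^M_i$ for $i = 0, 1$ (the case $i = 2$ being trivial, as $h$ is injective). Given $a, a' \in \mathcal{U}$, pick any $x \in X'_{\{a\}}$ and $x' \in X'_{\{a'\}}$. By the very definition of $\mathcal{E}_i$ one has $a E^M_i a' \Leftrightarrow x \mathcal{E}_i x'$, and Claim~\ref{the_hammer_claim}(\ref{the_hammer_claim_item2}) gives $x \mathcal{E}_i x' \Leftrightarrow \pi_1(x) \mathcal{E}_i \pi_1(x')$. But $\pi_1(x) \in X'_{\{h(a)\}}$ and $\pi_1(x') \in X'_{\{h(a')\}}$, so the last equivalence reads $h(a) E^M_i h(a')$. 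Hence $h$ preserves each $E^M_i$ and each $\neg E^M_i$, as required.

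There is no real obstacle here beyond reading off the partition-level information from the $X$-level bijection; the work has been absorbed by Claim~\ref{the_hammer_claim}. The only thing one must be careful about is that $h$ is genuinely a function (not just a relation), which is precisely what preservation of $\mathcal{E}_2$ buys us, and that the range of $h$ actually lands in $\mathcal{V}$, which follows from $\pi_1[X_\mathcal{U}] = X_\mathcal{V}$ together with the fact that $X'_{\{b\}} \subseteq X_\mathcal{V}$ forces $b \in \mathcal{V}$.
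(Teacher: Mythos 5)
Your proposal is correct and follows essentially the same route as the paper: use the $i=2$ case (where $E^M_2$ is equality) to see that $\pi_1$ respects the partition $(X'_{\{a\}})$ and hence induces a well-defined map $h$, get bijectivity from $\pi_2=\pi_1^{-1}$, and then read off preservation of $E^M_0, E^M_1$ from Claim~\ref{the_hammer_claim}(\ref{the_hammer_claim_item2}) via the definition of $\mathcal{E}_i$.
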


	\begin{proof} By \ref{the_hammer_claim}(\ref{the_hammer_claim_item2}), we have:
	\begin{enumerate}[$(*_1)$]
	\item If $x, y \in X_{\mathcal{U}}$ and $i < 3$, then $x \mathcal{E}_i y \Leftrightarrow \pi_1(x) \mathcal{E}_i \pi_1(y)$.
	\end{enumerate}
Now apply $(*_1)$ for $i = 2$ and recall that by \ref{hyp_C0}(\ref{hyp_C0_item1}) $\mathfrak{E}^M_2$ is equality on $M$. Then:
\begin{enumerate}[$(*_2)$]
	\item $\exists s \subseteq_1 \mathcal{U} (x, y \in X'_s) \Leftrightarrow \exists t \subseteq_1 \mathcal{V} (\pi_1(x), \pi_1(y) \in X'_t)$.
	\end{enumerate}	
Now, as $X_{\mathcal{U}} = \bigcup_{s \subseteq_1 \mathcal{U}} X'_s$ and $X_{\mathcal{V}} = \bigcup_{s \subseteq_1 \mathcal{V}} X'_s$, there is a function $\mathbf{h}_1$ from $\mathcal{U}$ into  $\mathcal{V}$ such that (not distinguishing $a \in \mathcal{U}$ with $\{a\} \subseteq_1 \mathcal{U}$):
\begin{enumerate}[$(*_3)$]
	\item If $x \in X'_s$, $s \subseteq_1 \mathcal{U}$, then $\pi_1(x) \in X'_{\mathbf{h}_1(s)}$.
	\end{enumerate}	
As $\pi_2 = \pi_1^{-1}$ and $\pi_2$ is a function from $X_{\mathcal{V}}$ onto $X_{\mathcal{U}}$ (cf. \ref{concl_sequence_rationals_Borel}) we have that:
\begin{enumerate}[$(*_4)$]
	\item $\mathbf{h}_1: \mathcal{U} \rightarrow \mathcal{V}$ is $1$-to-$1$ and onto.
	\end{enumerate}
Finally, applying \ref{the_hammer_claim}(\ref{the_hammer_claim_item2}) to $i$ and recalling the definition of $\mathcal{E}_i$ we get:
	\begin{enumerate}[$(*_5)$]
	\item For $i = 0, 1$, $a \neq b \in \mathcal{U} \text{ implies } a \mathfrak{E}^M_i b \Leftrightarrow \pi_1(a) \mathfrak{E}^M_i \pi_1(b)$.
	\end{enumerate}
\end{proof}

	\begin{conclusion}\label{the_iso_conclusion} $M \restriction \mathcal{U}$ and $M \restriction \mathcal{V}$ are isomorphic members of $\mathbf{K}^{\mrm{eq}}$.
\end{conclusion}


%
%

    In a work in preparation (among other things) we intend to give a characterization of the automorphism groups of the groups $G_{(1, \mathcal{U})}$ that we construct above.

\subsection{The Proof of the Main Theorem}

	Notice that in this subsection Hypothesis~\ref{hyp_non_iso} is no longer assumed.

	\begin{conclusion}\label{crucial_Borel_conclusion} Let $\mathfrak{m}[M] \in \mathrm{K}^{\mrm{bo}}_3$, $\mathcal{U}, \mathcal{V} \subseteq M$ and $|\mathcal{U}| = |\mathcal{V}| = \aleph_0$. Then:
\begin{equation}\tag{$\star$} M \restriction \mathcal{U} \cong M \restriction \mathcal{V} \Leftrightarrow G_{(1, \mathcal{U})}[\mathfrak{m}] \cong G_{(1, \mathcal{V})}[\mathfrak{m}].
\end{equation} 
\end{conclusion}

	\begin{proof} If the LHS of $(\star)$ holds, then by \ref{why_iso} also the RHS of $(\star)$ holds. If the RHS of $(\star)$ holds, then the assumptions in \ref{hyp_non_iso} are fulfilled and thus \ref{the_iso_conclusion} holds, and so the LHS of $(\star)$ holds.
\end{proof}

\begin{convention}\label{remark_sugar} In Fact~\ref{borel_structure} and Notation~\ref{graph_not}(\ref{graph_not_5}) instead of considering structures with domain $\omega$ we could have considered structures with domain an infinite subset of $\omega$. We take the liberty of not distinguishing between these two variants. This happens most notably in the Proof of Main Theorem right below.
\end{convention}

Recall:

	\begin{fact}\label{nice_special_fact} The class $\mathbf{K}^{\mrm{eq}}_\omega$ is Borel complete. In fact, there is a continuous map from $\mathrm{Graph}_{\omega}$ into $\mathbf{K}^{\mrm{eq}}_\omega$ which preserves isomorphism and its negation.
\end{fact}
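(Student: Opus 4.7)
The plan is to exhibit a concrete continuous reduction $\mathbf{B} : \mathrm{Graph}_\omega \to \mathbf{K}^{\mrm{eq}}_\omega$ that encodes a graph's vertices as $E_0$-blocks and its edges as two-element $E_1$-classes linking pairs of blocks. Since $E_2$ is forced to be equality, only $E_0$ and $E_1$ are available to carry information, and the construction uses each of them exactly once: $E_0$ records ``which vertex does a domain element belong to,'' and $E_1$ records adjacency.

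Concretely, fix a bijection $\phi : \{(u,v) \in \omega \times \omega : u \neq v\} \to \omega$. Given $H \in \mathrm{Graph}_\omega$ with edge relation $R^H$, define $M(H) \in \mathbf{K}^{\mrm{eq}}_\omega$ with domain $\omega$ by declaring: $\phi(u,v)$ and $\phi(u',v')$ are $E_0^{M(H)}$-equivalent iff $u = u'$; they are $E_1^{M(H)}$-equivalent iff either they are equal, or $\{u,v\} = \{u',v'\}$ and $u \, R^H \, v$; and $E_2^{M(H)}$ is equality. Thus the $E_0$-classes $C_u = \{\phi(u,v) : v \neq u\}$ are indexed by the vertices of $H$, and the nontrivial $E_1$-classes are exactly the pairs $\{\phi(u,v), \phi(v,u)\}$ with $u \, R^H \, v$.

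Continuity of $\mathbf{B}$ is immediate: the atomic diagram of $M(H)$ on any finite set $F \subseteq \omega$ depends only on the finitely many edges of $H$ of the form $\{u,v\}$ with $\phi(u,v) \in F$ or $\phi(v,u) \in F$. For the reduction property, a graph isomorphism $\sigma : H_1 \to H_2$ lifts to $\phi(u,v) \mapsto \phi(\sigma(u), \sigma(v))$, clearly an isomorphism $M(H_1) \to M(H_2)$. Conversely, any isomorphism $\tau : M(H_1) \to M(H_2)$ permutes $E_0$-classes and hence induces a bijection $\bar\tau$ on $\omega$ via $C_u \mapsto C_{\bar\tau(u)}$; the fact that $\tau$ preserves $E_1$ then forces, for each edge $u \, R^{H_1} \, v$, that $\tau(\phi(u,v)) = \phi(\bar\tau(u), \bar\tau(v))$ and that $\bar\tau(u) \, R^{H_2} \, \bar\tau(v)$, while non-edges stay non-edges because $\tau$ preserves the sizes of $E_1$-classes. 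So $\bar\tau : H_1 \cong H_2$.

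No serious obstacle is expected: the construction is symmetric and local, and the vertex set is canonically recovered as the $E_0$-quotient. The only mild care required is noting that the freedom $\tau$ retains on $E_1$-singletons (i.e.\ on codes of non-edges within a given $E_0$-block) does not disturb the induced map $\bar\tau$ on the quotient, since that map only depends on which blocks are linked by a nontrivial $E_1$-class.
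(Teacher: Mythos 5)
Your construction is correct. The paper does not actually prove this fact—it just cites Monk—and your explicit coding (vertices of $H$ as the infinite $E_0$-blocks $C_u$, edges as the two-element $E_1$-classes $\{\phi(u,v),\phi(v,u)\}$, $E_2$ forced to be equality) is exactly the standard reduction that citation points to, and it verifies continuity and both directions of the reduction. The only phrase worth tightening is ``non-edges stay non-edges because $\tau$ preserves the sizes of $E_1$-classes'': for a non-edge $\{u,v\}$ the image $\tau(\phi(u,v))$ need not be $\phi(\bar\tau(u),\bar\tau(v))$, so preservation of the class of a single point is not quite the right invariant; what you need (and what your closing remark correctly identifies) is that the property ``$C_u$ and $C_v$ are joined by a nontrivial $E_1$-class'' is an isomorphism invariant of the pair of blocks—equivalently, apply your edge argument to $\tau^{-1}$—which settles the non-edge case.
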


	\begin{proof} See e.g. \cite[pg.~295]{monk}.
\end{proof}

	\begin{proof}[Proof of Main Theorem] Let $M$ be as in Hypothesis~\ref{hyp_C0}. Fix $\mathfrak{m} \in \mathrm{K}^{\mrm{bo}}_3(M)$ (cf. Fact~\ref{K2bo_non_empty+}) and assume without loss of generality that $G_1[\mathfrak{m}]$ has set of elements $\omega$. For every $H \in \mathbf{K}^{\mrm{eq}}_\omega$ we define $F[H] : H \rightarrow M$ by defining $F[H](n)$ by induction on $n < \omega$ as the minimal $k < \omega$ such that $\{(\ell, F[H](\ell)) : \ell < n \} \cup \{(n, k)\}$ is an isomorphism from $H \restriction (n+1)$ onto $M \restriction (\{F[H](\ell) : \ell < n \} \cup \{k\})$. The map $H \mapsto M \restriction \{F[H](n) \, : \, n < \omega \}$ is clearly continuous. We will show that the map $F': M	\restriction \mathcal{U} \mapsto G_{(1, \mathcal{U})}[\mathfrak{m}]$, for $\mathcal{U} \subseteq M$ infinite, is also continuous (cf.~\ref{remark_sugar}), thus concluding that the map $\mathbf{B} := F' \circ F : H \mapsto G_{(1, \{F[H](n) \, : \, n < \omega \})}[\mathfrak{m}]$
is a continuous map from $\mathbf{K}^{\mrm{eq}}_\omega$ into $\mrm{TFAB}_\omega$ (cf.~\ref{remark_sugar}), so, by \ref{crucial_Borel_conclusion} and \ref{nice_special_fact}, we are done.

\smallskip
\noindent In order to show that $F'$ is continuous, first recall that $\mathfrak{m} \in \mathrm{K}^{\mrm{bo}}_3$ is fixed (cf. \ref{item_primes}), and so in particular $\bar{p}$ is fixed. Now, given $a \in G_1[\mathfrak{m}]$, we have to compute from $\mathcal{U}$ whether $a \in G_{(1, \mathcal{U})}[\mathfrak{m}]$ or not. To this extent, let $a = \sum \{q^a_\ell x^a_\ell : \ell < n\}$ with the $x_\ell$'s pairwise distinct and $q_\ell \in \mathbb{Q}^+$. Now, as by \ref{hyp_C1}(\ref{def_XU}), $X_{\mathcal{U}} = \bigcup\{X_s : s \subseteq_\omega \mathcal{U} \} = \bigcup\{X'_s : s \subseteq_1 \mathcal{U} \}$ and the latter is a partition of $X$, for every $\ell < n$, there is a unique finite $s^a_\ell \subseteq M$ such that the following conditions holds:
$$a \in G_{(1, \mathcal{U})}[\mathfrak{m}] \Leftrightarrow \bigwedge_{\ell < n} s^a_\ell \subseteq \mathcal{U}.$$
This suffices to show continuity of $F'$, thus concluding the proof of the theorem.
\end{proof}

	\begin{remark} We observe that in the context of the Proof of Main Theorem we can choose both $M$ and $\mathfrak{m}$ to be computable stuctures, in the sense of computable model theory, i.e., all the relations and functions of the structure are computable.
\end{remark}

\section{Completeness of Endorigid Torsion-Free Abelian Groups}\label{sec_cohop}

	The aim of this section is to show that deciding whether a group $G \in \mrm{TFAB}_\omega$ is endorigid is a complete co-analytic problem. We do this by reducing a well-known problem to the endorigidity problem, namely the problem of deciding whether a tree with domain $\omega$ has an infinite branch, which is well-known to be complete co-analytic. So the idea here is to code a tree $T$ into a $\mrm{TFAB}_\omega$ $G[T]$. The way we code the tree $T$ is reminiscent of the coding used for the proof of the Main Theorem. Also in this case $X$ will be a basis of the group $G[T]$, and we will code an element $t \in T$ via a partial permutation $f_t$ of $X$. As in Section~\ref{Borel_complete_section}, the group $G[T]$ that we wish to construct will interpolate between between $G_0 = \bigoplus \{ \mathbb{Z}x : x \in X\}$ and $G_2 = \bigoplus \{ \mathbb{Q}x : x \in X\}$, via a set of tailored divisibility conditions which code the behaviour of the partial maps $f_t$'s, which in turn code the elements of the tree $T$.
	
\smallskip
\noindent
	
	In \ref{hyp_A1}-\ref{observation_Xtree} we deal with the combinatorial part, then we will define the groups.

%
%

	\begin{hypothesis}\label{hyp_A1} Throughout this section the following hypothesis stands:
	\begin{enumerate}[(1)]
	\item\label{omega_levels} $T = (T, <_T)$ is a rooted tree with $\omega$ levels and we denote by $\mathrm{lev}(t)$ the level of~$t$;
	\item $T = \bigcup_{n < \omega} T_n$, $T_n \subseteq T_{n+1}$, and $t \in T_n$ implies that $\mathrm{lev}(t) \leq n$;
	\item\label{hyp_A1_item3} $T_0 = \emptyset$, $T_n$ is finite, and we let $T_{<n} = \bigcup_{\ell < n} T_\ell$ (so $T_{<(n+1)} = T_n$);
	\item if $s <_T t \in T_{n+1}$, then $s \in T_n$;
	\item $T$ is countable. 
\end{enumerate}
\end{hypothesis}

	\begin{definition}\label{hyp_A2} Let $\mathrm{K}^{\mrm{ri}}_1(T)$ be the class of objects:
	$$\mathfrak{m}(T) = \mathfrak{m} = (X, X^T_n, \bar{f}^T: n < \omega) = (X, X_n, \bar{f} : n < \omega)$$
satisfying the following requirements:
	\begin{enumerate}[(a)]
	\item\label{X0_not_empty} $X_0 \neq \emptyset$ and, for $n < \omega$, $X_n$ is finite and $X_n \subsetneq X_{n+1}$, and $X_{< n} = \bigcup_{\ell < n} X_\ell$;
	\item $\bar{f} = (f_t : t \in T)$;
	\item\label{def_of_f_t} if $n >0$ and $t \in T_n \setminus T_{< n}$, then $f_t$ is a one-to-one function from $X_{n-1}$ into $X_n$;
	\item\label{X0_cap_ft} for every $t \in T$, $X_0 \cap \mathrm{ran}(f_t) = \emptyset$;
	\item\label{item_fs_sub_ft} if $s \leq_T t \in T_n$, then $f_s \subseteq f_t$; 
	\item\label{item_no_fix_point} if $t \in T_{n+1} \setminus T_{n}$, $f_t(x) = y$ and $y \in X_{n}$, then for some $s <_T t$, $x \in \mrm{dom}(f_s)$;
	\item\label{item_meet_tree} if $s, t \in T_n$ and $y \in \mrm{ran}(f_s) \cap \mrm{ran}(f_t)$, then for some $r \in T_n$ such that $r \leq_T s, t$ we have that $y \in \mrm{ran}(f_r)$, equivalently, $\mrm{ran}(f_s) \cap \mrm{ran}(f_t) = \mrm{ran}(f_r)$, for $r = s \wedge t$, where $\wedge$ is the natural semi-lattice operation taken in the tree $(T, <_T)$;
	\item\label{itemh_co} $X_{n+1} \supsetneq \bigcup \{\mrm{ran}(f_t) : t \in T_{n+1} \setminus T_n\} \cup X_n$;
	\item we let $X = X^{\mathfrak{m}} = \bigcup_{n < \omega} X_n$;
	\item\label{hyp_A2_last} if $f_s(x) = f_t(x)$ and $x \in X_n \setminus X_{<n}$, then we have the following:
	$$f_s \restriction X_n = f_t \restriction X_n \text{ and } X_n \subseteq \mrm{dom}(f_s) \cap \mrm{dom}(f_t).$$
\end{enumerate}
\end{definition}

	\begin{notation}\label{n(x)} For $x \in X$, we let $\mathbf{n}(x)$ be the unique $n < \omega$ such that $x \in X_n \setminus X_{< n}$ (so e.g. $x \in X_0$ implies $\mathbf{n}(x) = 0$).
\end{notation}

\begin{convention}\label{the_m_convention} $\mathfrak{m} = (X, X_n, \bar{f}: n < \omega) \in \mathrm{K}^{\mrm{ri}}_1(T)$ (cf. \ref{hyp_A2} and \ref{K_co_non-empty} below).
\end{convention}

	\begin{observation}\label{observation_new_elements_are_moved} 
	In the context of Definition~\ref{hyp_A2}, we have: 
	\begin{enumerate}[(1)]
	\item If $m < n < \omega$, $t \in T_n \setminus T_{<n}$ and for every $s <_T t$ we have $s \in T_{m}$, then:
$$(X_{n-1} \setminus X_{m}) \cap \mathrm{ran}(f_t) = \emptyset.$$ 
	\item\label{obs_item2} If $t \in T$, then for every $x \in \mrm{dom}(f_t)$ we have that $x \neq f_t(x)$, moreover there is a unique $0 < n < \omega$ such that $x \in X_{n-1}$ and $f_t(x) \in X_n \setminus X_{n-1}$, and for some $s \in T_n \setminus T_{< n}$ we have $s \leq_T t$ and $f_s(x) = f_t(x)$.
\end{enumerate}
\end{observation}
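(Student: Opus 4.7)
The plan is to derive both statements directly from conditions (\ref{X0_cap_ft}), (\ref{item_fs_sub_ft}), (\ref{item_no_fix_point}) of Definition~\ref{hyp_A2} together with the elementary tree-theoretic fact that the predecessors of any node are well-ordered. Part (1) will be a one-line contradiction; part (2) will come from choosing a minimal witness and again applying~(\ref{item_no_fix_point}).

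For part (1), I would argue by contradiction. Suppose $y \in (X_{n-1} \setminus X_{m}) \cap \mathrm{ran}(f_t)$, say $y = f_t(x)$ with $x \in X_{n-1}$. Since $T_{<n} = T_{n-1}$ by Hypothesis~\ref{hyp_A1}(\ref{hyp_A1_item3}), we have $t \in T_n \setminus T_{n-1} = T_{(n-1)+1} \setminus T_{n-1}$, and $y \in X_{n-1}$, so (\ref{item_no_fix_point}) produces $s <_T t$ with $x \in \mathrm{dom}(f_s)$. By (\ref{item_fs_sub_ft}), $f_s(x) = f_t(x) = y$. The hypothesis on $t$ forces $s \in T_m$, hence $k := \mathrm{lev}(s) \leq m$ by Hyp.~\ref{hyp_A1}; then condition~(\ref{def_of_f_t}) yields $\mathrm{ran}(f_s) \subseteq X_k \subseteq X_m$, so $y \in X_m$, contradicting $y \notin X_m$.

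For part (2), I fix $x \in \mathrm{dom}(f_t)$ and set $s$ to be the $<_T$-minimum of the nonempty set $\{r \leq_T t : x \in \mathrm{dom}(f_r)\}$ (existence uses that predecessors of $t$ are well-ordered). Writing $k = \mathrm{lev}(s)$, condition~(\ref{def_of_f_t}) gives $x \in X_{k-1}$ and $f_s(x) \in X_k$; moreover $k \geq 1$ since $T_0 = \emptyset$, and $f_s(x) \notin X_0$ by~(\ref{X0_cap_ft}). Let $n := \mathbf{n}(f_s(x)) \in \{1, \dots, k\}$. If $n < k$ then $f_s(x) \in X_{k-1}$, and applying~(\ref{item_no_fix_point}) to $s \in T_k \setminus T_{k-1}$ yields $s' <_T s$ with $x \in \mathrm{dom}(f_{s'})$, contradicting the minimality of $s$. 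Hence $n = k$, which gives simultaneously $x \in X_{n-1}$, $f_s(x) \in X_n \setminus X_{n-1}$, $s \in T_n \setminus T_{<n}$, and $f_s(x) = f_t(x)$ by~(\ref{item_fs_sub_ft}). The inequality $x \neq f_t(x)$ is then automatic ($x \in X_{n-1}$, $f_t(x) \notin X_{n-1}$), and uniqueness of $n$ follows from the partition $X = \bigsqcup_\ell (X_\ell \setminus X_{<\ell})$. The only real care needed is the index shift from the formulation $T_{n+1} \setminus T_n$ in~(\ref{item_no_fix_point}) to our $T_n \setminus T_{<n}$, which is pure bookkeeping.
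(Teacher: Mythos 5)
Your proof is correct and is essentially the paper's own argument: part (1) is the same contradiction via Definition~\ref{hyp_A2}(\ref{def_of_f_t}),(\ref{item_fs_sub_ft}),(\ref{item_no_fix_point}), and part (2) takes the same $\leq_T$-minimal $s \leq_T t$ with $x \in \mrm{dom}(f_s)$ and reads clause (\ref{item_no_fix_point}) contrapositively. The one slip is your identification of the governing index with $\mrm{lev}(s)$: clause (\ref{def_of_f_t}) attaches $f_s$ to the unique $k$ with $s \in T_k \setminus T_{<k}$, i.e.\ the first appearance of $s$ in the filtration $(T_n)_{n<\omega}$, and this need not equal $\mrm{lev}(s)$ --- Hypothesis~\ref{hyp_A1} only gives $\mrm{lev}(s) \leq k$, and they genuinely differ, e.g.\ the root has level $0$ yet first appears in some $T_k$ with $k \geq 1$ since $T_0 = \emptyset$, so for such $s$ your ``$x \in X_{\mrm{lev}(s)-1}$'' would read $x \in X_{-1}$. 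Your own next step (``$k \geq 1$ since $T_0 = \emptyset$'') shows you really mean the first-appearance index; with that substitution made in both parts (in part (1), $s \in T_m$ gives first-appearance index $\leq m$, hence $\mrm{ran}(f_s) \subseteq X_m$, which is all you need) every step is correct and the argument coincides with the paper's.
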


\begin{proof} We prove (1), by Definition~\ref{hyp_A2}(\ref{def_of_f_t}) we know that $f_t$ is one-to-one from $X_{n-1}$ into $X_n$. If $n = 1$, then $m = 0$ and so $X_{n-1 } = X_0 = X_m$, thus the conclusion is trivial. 
Suppose then that $n > 1$ and let $y \in (X_{n-1} \setminus X_{m}) \cap \mathrm{ran}(f_t)$, and let $x \in \mrm{dom}(f_t)$ be such that $f_t(x) = y$. Then, by Definition~\ref{hyp_A2}(\ref{item_no_fix_point}) there exists $s <_T t$ such that $x \in \mrm{dom}(f_s)$. But then, using the assumption in (1), we have that $s \in T_{m}$ (so $m = 0$ is impossible by Definition~\ref{hyp_A1}(\ref{hyp_A1_item3})).  Hence, by Definition~\ref{hyp_A2}(\ref{def_of_f_t}), $\mrm{ran}(f_s) \subseteq X_m$, so $y = f(x) \in X_m$, contradicting the fact that $y \in (X_{n-1} \setminus X_{m})$.

\smallskip
\noindent We prove (2). Assume that $x$, $t$, and thus also $f_t$, are fixed and $x \in \mrm{dom}(f_t)$. Let $s \leq_T t$ be $\leq_T$-minimal such that $f_s(x)$ is well-defined, and let $n < \omega$ be such that $s \in T_n \setminus T_{<n}$ (notice that $n \geq 1$ since $T_0 = \emptyset$). Clearly, there is unique $m < \omega$ such that $x \in X_m \setminus X_{< m}$. As $x \in \mrm{dom}(f_s)$ and $s \in T_n \setminus T_{< n}$ necessarily $m < n$, so $x \in X_{< n}$. But by the choice of $s$ we have that $r <_T s \text{ implies } x \notin \mrm{dom}(f_r)$.
By the last two sentences and Def.~\ref{hyp_A2}(\ref{item_no_fix_point}) we have $f_s(x) \in X_n \setminus X_{< n}$, but $f_t(x) = f_s(x)$.
\end{proof}

	\begin{cclaim}\label{K_co_non-empty} For $T$ as in Hypothesis~\ref{hyp_A1}, $\mathrm{K}^{\mrm{ri}}_1(T) \neq \emptyset$ (cf. Definition~\ref{hyp_A2}).
\end{cclaim}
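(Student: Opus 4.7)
I plan to construct $\mathfrak{m} = (X, X_n, \bar f : n < \omega)$ by induction on the stage $n$, introducing fresh symbols at each step so that the combinatorial constraints of Def.~\ref{hyp_A2} become automatic. Set $X_0 := \{x_0\}$ for any single element $x_0$. At the inductive step, suppose $X_n$ and $(f_s : s \in T_n)$ are already defined. For each $t \in T_{n+1} \setminus T_n$, let $p(t)$ denote its immediate $<_T$-predecessor in $T$ (by Hyp.~\ref{hyp_A1}(4), $p(t) \in T_n$ when it exists; otherwise we agree $\mrm{dom}(f_{p(t)}) := \emptyset$). For every $y \in X_n \setminus \mrm{dom}(f_{p(t)})$, mint a brand-new symbol $z_{t,y}$, all such $z_{t,y}$'s chosen pairwise distinct across $(t,y)$ and outside $X_n$, and put $f_t := f_{p(t)} \cup \{(y, z_{t,y}) : y \in X_n \setminus \mrm{dom}(f_{p(t)})\}$. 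Set $X_{n+1} := X_n \cup \{z_{t,y} : t \in T_{n+1} \setminus T_n,\, y \in X_n \setminus \mrm{dom}(f_{p(t)})\} \cup \{\ast_n\}$, where $\ast_n$ is one further fresh symbol used only to force the strict containment required by clause (h).

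Conditions (a)--(e), (h), (i) are then immediate: in particular $f_t \restriction \mrm{dom}(f_{p(t)}) = f_{p(t)}$ while the new values lie in $X_{n+1} \setminus X_n$, delivering injectivity of $f_t$ and the chain property (e). Clause (d) holds because $T_0 = \emptyset$ forces every $t$ to appear at some stage $n(t) \geq 1$, so $z_{t,y} \in X_{n(t)} \setminus X_{n(t)-1}$ is disjoint from $X_0$. Clause (f) is equally immediate: if $x \in X_n \setminus \mrm{dom}(f_{p(t)})$, then $f_t(x) = z_{t,x} \in X_{n+1} \setminus X_n$, so whenever $f_t(x) \in X_n$ one must have $x \in \mrm{dom}(f_{p(t)})$, and $s := p(t)$ witnesses~(f).

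The only conditions requiring real care are (g) and (j), and both rest on the same uniform fact: for $x$ with $\mathbf n(x) = n$ and any $s$ with $x \in \mrm{dom}(f_s)$, $f_s(x) = z_{r, x}$, where $r \leq_T s$ is the \emph{earliest} ancestor of $s$ with $n(r) \geq n+1$ (indeed it is at that stage that $x$ first enters the domain along the branch through $s$). For (g), suppose $s, t \in T_{n+1}$ share some $y \in \mrm{ran}(f_s) \cap \mrm{ran}(f_t)$: if both $s, t$ are new, then distinctness of the $z$-labels forces $y \in \mrm{ran}(f_{p(s)}) \cap \mrm{ran}(f_{p(t)})$, and the inductive hypothesis at stage $n$ together with the elementary equality $p(s) \wedge p(t) = s \wedge t$ (both new and distinct forces the meet strictly below each) yield $y \in \mrm{ran}(f_{s \wedge t})$; the mixed case (one old, one new) and the comparable case collapse to induction. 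For (j), $f_s(x) = f_t(x)$ with $\mathbf n(x) = n$ forces $z_{r_s, x} = z_{r_t, x}$, so the earliest witnesses $r_s \leq_T s$ and $r_t \leq_T t$ above stage $n$ coincide, say $r_s = r_t =: r$; then for any other $x' \in X_n$ with $\mathbf n(x') = n' \leq n$, the earliest ancestor of $s$ (resp.\ of $t$) with stage $\geq n'+1$ lies $\leq_T r$ and is therefore just the earliest ancestor of $r$ with stage $\geq n'+1$ --- the \emph{same} node for both sides, whence $f_s(x') = z_{r', x'} = f_t(x')$; the inclusion $X_n \subseteq \mrm{dom}(f_s) \cap \mrm{dom}(f_t)$ is automatic since $n(s), n(t) \geq n(r) \geq n+1$. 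The main technical obstacle is precisely the meet-behavior implicit in (g) and (j), which the fresh-indices device reduces to mechanical bookkeeping.
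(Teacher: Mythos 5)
Your construction is correct and verifies all clauses of Definition~\ref{hyp_A2}; the paper simply declares this claim ``straightforward,'' and your fresh-symbol induction (with the key observation that $f_s(x)=z_{r,x}$ for $r$ the earliest ancestor of $s$ whose stage exceeds $\mathbf{n}(x)$, which settles clauses (g) and (j)) is exactly the canonical way to fill in that omitted argument.
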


	\begin{proof} Straightforward.
\end{proof}

	\begin{definition}\label{def_reasonable} On $X$ (cf. Convention~\ref{the_m_convention}) we define:
	\begin{enumerate}[(1)]
	\item for $x \in X$, $\mrm{suc}(x) = \{ f_t(x) : t \in T, x \in \mrm{dom}(f_t) \}$;
	\item\label{def_reasonable_item<} for $x, y \in X$, we let $x <_X y$ if and only if for some $0 < n < \omega$ and $x_0, ..., x_{n} \in X$ we have that $\bigwedge_{\ell < n} x_{\ell + 1} \in \mrm{suc}(x_\ell)$, $x = x_0$ and $y = x_n$;
	\item $\mrm{seq}_k(X) = \{\bar{x} \in \mrm{seq}_k(X) : \bar{x} \text{ is injective} \}$;
	\item\label{def_reasonable_item} we say that $\bar{x} \in \mrm{seq}_k(X)$ is reasonable when the following happens:
	$$n(1) < n(2), \, x_{i(1)} \in X_{n(1)} \setminus  X_{< n(1)}, \, x_{i(2)} \in X_{n(2)} \setminus  X_{< n(2)} \Rightarrow i(1) < i(2);$$
	\item\label{<^j_*}  $<^k_X$ is the partial order on $\mrm{seq}_k(X)$ defined as $\bar{x}^1 <^k_X \bar{x}^2$ if and only if $\bar{x}^1, \bar{x}^2 \in \mrm{seq}_k(X)$ and there are $0 < n < \omega$, $\bar{y}^0, ..., \bar{y}^n \in \mrm{seq}_k(X)$ and $t_0, ..., t_{n-1} \in T$ such that for every $\ell < n$ we have that $f_{t_\ell}(\bar{y}^\ell) = \bar{y}^{\ell+1}$, and $(\bar{x}^1, \bar{x}^2) = (\bar{y}^0, \bar{y}^n)$;
	\item notice that for $k = 1$ we have that $<^k_X = <_X$, where $<_X$ is as in (\ref{def_reasonable_item<}) (ignoring the difference between $x$ and $(x)$, for $x \in X$);
	\item\label{def_E_k_endo} for $k \geq 1$, let $E_k$ be the closure of $\{(\bar{x}, \bar{y}) : \bar{x} <^k_X \bar{y} \}$ to an equivalence relation.
	\end{enumerate}
\end{definition}

	\begin{observation}\label{unique_bart} If $\bar{x}^1 \leq^k_X \bar{x}^2$ (cf. \ref{def_reasonable}(\ref{<^j_*})), then there is a unique $\bar{t}$ such that:
	\begin{enumerate}[(a)]
	\item $\bar{t} \in T^{n}$ for some $n < \omega$;
	\item $f_{\bar{t}}(\bar{x}^1) = \bar{x}^2$, where $f_{\bar{t}} = (f_{t_{n-1}} \circ \cdots \circ f_{t_0})$ and $t_0, ..., t_{n-1}$ are as as in \ref{def_reasonable}(\ref{<^j_*});
	\item for every $\ell < n$, there is no $s <_T t_\ell$ such that $f_s(\bar{y}_\ell)$ is well-defined, where $(\bar{y}_0, ..., \bar{y}_{n-1})$ are as in \ref{def_reasonable}(\ref{<^j_*});
	\item if $\bar{t}' \in T^n$ is as in clauses (a)-(b) above and $\ell < n$, then $t_\ell \leq_T t'_\ell$.
	\end{enumerate}
\end{observation}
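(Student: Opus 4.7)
The plan is to establish existence and uniqueness separately, using a rigidity lemma extracted from Def~\ref{hyp_A2}(\ref{item_meet_tree}).

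For existence, I start with any witness $(\bar{t}^{(0)}, (\bar{y}^\ell))$ of $\bar{x}^1 <^k_X \bar{x}^2$ provided by Def~\ref{def_reasonable}(\ref{<^j_*}). For each $\ell < n$, I replace $t^{(0)}_\ell$ by the $\leq_T$-minimum $t_\ell$ of the set $\{s \leq_T t^{(0)}_\ell : \bar{y}^\ell \subseteq \mrm{dom}(f_s)\}$; this minimum exists because the chain $\{s \leq_T t^{(0)}_\ell\}$ is well-ordered (Def~\ref{def_trees}(1)) and the set contains $t^{(0)}_\ell$. By Def~\ref{hyp_A2}(\ref{item_fs_sub_ft}), $f_{t_\ell} \subseteq f_{t^{(0)}_\ell}$, so $f_{t_\ell}(\bar{y}^\ell) = \bar{y}^{\ell+1}$, and $\bar{t} = (t_\ell : \ell < n)$ with the unchanged intermediates is a witness satisfying (a), (b), and (c) by construction.

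The key tool for uniqueness and (d) is the following rigidity lemma: for $y \in X$ and $y' \in \mrm{suc}(y)$, there is a unique $s \in T_{\mathbf{n}(y')} \setminus T_{<\mathbf{n}(y')}$ with $f_s(y) = y'$. Indeed, two such $s, s'$ would give $y' \in \mrm{ran}(f_s) \cap \mrm{ran}(f_{s'})$, so by Def~\ref{hyp_A2}(\ref{item_meet_tree}) the meet $r \leq_T s, s'$ satisfies $y' \in \mrm{ran}(f_r)$; applying Obs~\ref{observation_new_elements_are_moved}(\ref{obs_item2}) to $r$ (with preimage $y$, forced by injectivity of $f_s \supseteq f_r$) places $r$ at stage $\mathbf{n}(y')$. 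Since $<_T$-strict predecessors strictly lower the stage (a consequence of Hyp~\ref{hyp_A1}(4)), $r \leq_T s$ at equal stage gives $r = s$, and similarly $r = s'$.

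For uniqueness, given $\bar{t}, \bar{t}'$ satisfying (a)--(d), I proceed by downward induction on $\ell$ to show $\bar{y}^\ell = \bar{y}'^\ell$ and $t_\ell = t'_\ell$; the base case is $\bar{y}^n = \bar{x}^2 = \bar{y}'^n$. In the inductive step, with $\bar{y}^{\ell+1} = \bar{y}'^{\ell+1}$, I pick a component $j$ with $\mathbf{n}(y^{\ell+1}_j) = \max_{j'} \mathbf{n}(y^{\ell+1}_{j'})$: the rigidity lemma identifies a unique stage-$\mathbf{n}(y^{\ell+1}_j)$ element $s$ with $y^{\ell+1}_j \in \mrm{ran}(f_s)$, and then $y^\ell_j = y'^\ell_j = f_s^{-1}(y^{\ell+1}_j)$; coupled with (d) applied in both directions (forcing $t_\ell \leq_T t'_\ell$ and $t'_\ell \leq_T t_\ell$ among same-length witnesses), one obtains $t_\ell = t'_\ell$, and then $\bar{y}^\ell = \bar{y}'^\ell$ follows from injectivity (Def~\ref{hyp_A2}(\ref{def_of_f_t})). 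Finally, clause (d) itself is derived by applying the existence construction to any length-$n$ witness $\bar{t}'$: this produces $\bar{t}'^{\min}$ satisfying (a)--(c) with $t'^{\min}_\ell \leq_T t'_\ell$, and uniqueness gives $\bar{t} = \bar{t}'^{\min}$, hence $t_\ell \leq_T t'_\ell$. The main obstacle is establishing the rigidity lemma cleanly: it rests on the delicate interplay between the tree order $<_T$ and the stage filtration $\{T_n\}$, specifically the fact that two tree-elements at the same stage cannot be $<_T$-comparable.
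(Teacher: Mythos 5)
The paper states this Observation without proof, so there is no ``official'' argument to compare against; I evaluate yours on its own terms. Your existence half is correct: minimizing each $t^{(0)}_\ell$ along the (finite, linearly ordered) set of its $<_T$-predecessors gives (a)--(c). Your rigidity lemma is also essentially right, and its proof via Def~\ref{hyp_A2}(\ref{item_meet_tree}) in fact yields the stronger and more useful form: for $y'$ with $\mathbf{n}(y')=m$ there is a unique $s\in T_m\setminus T_{<m}$ with $y'\in\mrm{ran}(f_s)$ (any two such have, by the meet condition, a common $\leq_T$-lower bound whose range still contains $y'$, and Hyp.~\ref{hyp_A1}(4) forbids a strict predecessor from lying outside $T_{<m}$). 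This also settles a point you leave implicit: two witnessing chains for $\bar{x}^1<^k_X\bar{x}^2$ must have the same length, since by Obs.~\ref{observation_new_elements_are_moved}(\ref{obs_item2}) the value $\mathbf{n}(\cdot)$ strictly increases along each coordinate and $\mrm{suc}$-predecessors are unique.

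The genuine gap is a circularity between your uniqueness step and your derivation of (d). Your uniqueness argument only shows that two tuples \emph{both already satisfying (d)} coincide, because the step producing $t_\ell=t'_\ell$ invokes ``(d) applied in both directions''. You then derive (d) for the constructed $\bar{t}$ by comparing it with $\bar{t}'^{\min}$, which is only known to satisfy (a)--(c); the uniqueness you proved does not apply to that pair. What is needed --- and what your tools already suffice for --- is uniqueness among (a)--(c)-witnesses. Given that the intermediate tuples $\bar{y}^\ell$ are determined (by predecessor uniqueness), show that any $t_\ell$ satisfying (b) and the local minimality (c) lies in $T_m\setminus T_{<m}$ for $m=\max_j\mathbf{n}(y^{\ell+1}_j)$: by Obs.~\ref{observation_new_elements_are_moved}(\ref{obs_item2}) there are $s_j\leq_T t_\ell$ at stage $\mathbf{n}(y^{\ell+1}_j)$ with $f_{s_j}(y^{\ell}_j)=y^{\ell+1}_j$; these are pairwise $\leq_T$-comparable, the largest is already defined on all of $\bar{y}^\ell$, and (c) forces it to equal $t_\ell$. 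Then your rigidity lemma applied to a maximal-stage coordinate identifies $t_\ell$ uniquely (alternatively, apply Def~\ref{hyp_A2}(\ref{item_meet_tree}) coordinatewise to two such witnesses: their meet is again a witness, so (c) forces both to equal it). With uniqueness among (a)--(c)-witnesses in hand, (d) follows exactly as you describe, and uniqueness of the tuple satisfying (a)--(d) is immediate.
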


	\begin{observation}\label{observation_Xtree}
	\begin{enumerate}[(1)]
	\item $(X, <_X)$ is a tree with $\omega$ levels;
	\item\label{roots_item} every $z \in X_0$ is a root of the tree $(X, <_X)$, further, for every $n < \omega$, some $z \in X_{n+1} \setminus X_n$ is a root of the tree $(X, <_X)$, and so $z/E_1 \cap X_n = \emptyset$;
	\item if $y \in X_{n+1} \setminus X_n$, then for at most one $x \in X_n$ we have $y \in \mrm{suc}(x)$; 
	\item\label{cone} if $y \in \mrm{suc}(x)$, then $\{ t \in T : f_t(x) = y \}$ is a cone of $T$;
	\item\label{wlog_reasonable} if $\bar{x} \in \mrm{seq}_k(X)$, then some permutation of $\bar{x}$ is reasonable (cf. Definition~\ref{def_reasonable}(\ref{def_reasonable_item}));
	\item\label{reasonable_implies_reasonable} if $f_t(\bar{x}) = \bar{y}$ and $\bar{x}$ is reasonable, then so is $\bar{y}$;
	\item\label{ktree} for every $1 \leq k < \omega$, $(\mrm{seq}_k(X), <^k_X)$ is a tree with $\omega$ levels;
	\item if $\bar{x} <^k_X \bar{y}$ and $\bar{x}$ is reasonable, then $\bar{y}$ is also reasonable;
	\item\label{observation_Xtree_item10} if $\bar{x} \in \mrm{seq}_k(X)$ is reasonable, $\bar{x} \leq^k_X \bar{y}^1 = (y^1_0, ..., y^1_{k-1})$, $\bar{x} \leq^k_X \bar{y}^2 = (y^2_0, ..., y^2_{k-1})$ and $y^1_{k-1} = y^2_{k-1}$, then $ \bar{y}^1 = \bar{y}^2$;
	\item\label{item_10} for every $t \in T$, $\mrm{dom}(f_t)$ is $X_n$, for some $n < \omega$, and we have:
	$$x, y \in \mrm{dom}(f_t) \wedge \mathbf{n}(x) = \mathbf{n}(y) \; \Rightarrow \; \mathbf{n}(f_t(x)) = \mathbf{n}(f_t(y));$$
	\item like (10) with $\bar{t} \in T^n$, $n \geq 1$, where we let:
	$$f_{\bar{t}} = (f_{t_{n-1}} \circ \cdots \circ f_{t_0});$$
	\item\label{item_12} recalling the notation from (11), if $f_{\bar{t}_1}(x) = f_{\bar{t}_2}(x)$, then $\mrm{lg}(\bar{t}_1) = \mrm{lg}(\bar{t}_2) = k$; moreover, letting $\bar{t}_1 = (t_{(1, \ell)} : \ell < k)$ and $\bar{t}_2 = (t_{(2, \ell)} : \ell < k)$,  if $\bar{t}_1$ is as in \ref{unique_bart}, then we have that $\ell < \mrm{lg}(\bar{t}_1)$ implies that $t_{(1, \ell)} \leq_T t_{(2, \ell)}$.
	\end{enumerate}
\end{observation}

	\begin{proof} Items (1)-(2) are clear, where  (2) is by \ref{hyp_A2}(\ref{itemh_co}). Item (3) is by Definition~\ref{hyp_A2}(\ref{item_no_fix_point})-(\ref{item_meet_tree}). Items (4) and (5) are also easy (and (4) is not used (except in \ref{G0_is_tree}(\ref{G0_is_tree_cone})) but we retain it to give the picture). Item (6) can be proved for $t \in T_n \setminus T_{< n}$ by induction on $n < \omega$.
Finally, (7) and (8) are easy, and (9) is easy to see using \ref{unique_bart} and \ref{hyp_A2}(\ref{hyp_A2_last}). Also clauses (10), (11) are easy, and clause (12) holds by \ref{unique_bart}.
\end{proof}

	
	\begin{definition}\label{def_G02_cohopfian} Let $\mathfrak{m} \in \mathrm{K}^{\mrm{ri}}_1(T)$ (i.e. as in Convention~\ref{the_m_convention}).
	\begin{enumerate}[(1)]
	\item Let $G_2 = G_2[\mathfrak{m}]$ be $\bigoplus \{ \mathbb{Q}x : x \in X\}$.
	\item Let $G_0 = G_0[\mathfrak{m}]$ be the subgroup of $G_2$ generated by $X$, i.e., $\bigoplus \{ \mathbb{Z}x : x \in X\}$.
	\item\label{hatf_on_G2} For $t \in T$, let:
	\begin{enumerate}[(a)]
	\item $H_{(2, t)} = \bigoplus \{ \mathbb{Q}x : x \in \mrm{dom}(f_t)\}$;
	\item $I_{(2, t)} = \bigoplus \{ \mathbb{Q}x : x \in \mrm{ran}(f_t)\}$;
	\item\label{hat_f} $\hat{f}^2_t$ is the (unique) isomorphism from $H_{(2, t)}$ onto $I_{(2, t)}$ such that $x \in \mrm{dom}(f_t)$ implies that $\hat{f}^2_t(x) = f_t(x)$ (cf. Definition~\ref{hyp_A2}(\ref{def_of_f_t})).
	\end{enumerate}
	\item For $t \in T$, we define $H_{(0, t)} := H_{(2, t)} \cap G_0$ and $I_{(0, t)} := I_{(2, t)} \cap G_0$;
	\item For $\hat{f}^2_t$ as above, we have that $\hat{f}^2_t [H_{(0, t)}] = I_{(0, t)}$. We define $\hat{f}^0_t$ as $\hat{f}^2_t \restriction H_{(0, t)}$.
	\item\label{def_<*} We define the partial order $<_*$ on $G^+_0 $ by letting $a <_* b$ if and only if $a \neq b \in G^+_0$ and, for some $0 < n < \omega$,  $a_0, ..., a_n \in G_0, a_0 = a, a_n = b$ and:
	$$\ell < n \Rightarrow \exists t \in T(\hat{f}^0_t(a_\ell) = a_{\ell + 1}).$$
	\item For $a = \sum_{\ell < m}q_\ell x_\ell$, with $x_\ell \in X$ and $q_\ell \in \mathbb{Q}^+$, let $\mathrm{supp}(a) = \{ x_\ell : \ell < m \}$.
	\item\label{n[a]} For $a \in G^+_2$, let $\mathbf{n}(a)$ be the minimal $n < \omega$ such that $a \in \langle X_{n}\rangle^*_{G_2}$.
\end{enumerate}
\end{definition}

	While the aim of Definition~\ref{def_G02_cohopfian} should be clear from the explanations given at the beginning of this section, the reader may wonder what is the aim of Lemma~\ref{G0_is_tree} and Claim~\ref{claim9A}. In the crucial proof of this section we will show that given an endomorphism $\pi$ of $G_1$ and $x \in X$ we have that $\pi(x)$ has the form $qy$ for some $y \in Y$ and $q \in \mathbb{Q}$, this requires a detailed analysis of supports, hence \ref{G0_is_tree} and \ref{claim9A}.

	\begin{lemma}\label{G0_is_tree}\begin{enumerate}[(1)]
	\item\label{G0_is_tree_cone} If $\{ t \in T : \hat{f}^2_t(a) = b\} \neq \emptyset$, then it is a cone of $T$.
	\item\label{G0_is_tree_restrictiontoX}  $<_* \restriction X = <_X$ (where $<_X$ is as in Definition~\ref{def_reasonable}(\ref{def_reasonable_item<})).
	\item\label{G0_tree_item} $(G^+_0, <_*)$ is a countable tree with $\omega$ levels (recall Hypothesis~\ref{hyp_A1}(\ref{omega_levels})).
	\item\label{G0_is_tre_item4} If $s \leq_T t$, then $\hat{f}^\ell_s \subseteq \hat{f}^\ell_t$, for $\ell \in \{0, 2 \}$.
	\item\label{n[a]1} If $t \in T$, $\hat{f}^2_t(a) = b$ and $a \in G^+_0$, then $\mathbf{n}(a) < \mathbf{n}(b)$ (cf. Definition~\ref{def_G02_cohopfian}(\ref{n[a]})).
	\item If $a <_* b$ (so $a, b \in G^+_0$), then the sequence $(a_\ell : \ell \leq n)$ from \ref{def_G02_cohopfian}(\ref{def_<*}) is unique.
	\item\label{after_replacing} If $a_1 <_* a_2$, and, for $\ell \in \{ 1, 2 \}$, $a_\ell = \sum_{i < k} q^\ell_ix^\ell_i$, $q^\ell_i \in \mathbb{Q}^+$, $\bar{x}^\ell = (x^\ell_i : i < k) \in \mrm{seq}_k(X)$, then maybe after replacing $\bar{x}^1$ with a permutation of it we have:
$$\bar{x}^0 \leq^k_X \bar{x}^1 \text{ and } q^1_i = q^2_i \;\,(\text{for $i < k$}).$$
\end{enumerate}
\end{lemma}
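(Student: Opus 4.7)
The plan is to prove the seven items in dependency order: (4), (5), (1), (2), (7), (6), (3), treating (4)--(5) as basic tools, (1)--(2) as cleanup, (7) as the structural bridge to the combinatorics, and (6) as the most delicate point.

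Item~(4) is immediate from Def.~\ref{hyp_A2}(\ref{item_fs_sub_ft}): $s \leq_T t$ gives $f_s \subseteq f_t$, hence $H_{(2,s)} \subseteq H_{(2,t)}$, and the uniqueness clause in Def.~\ref{def_G02_cohopfian}(\ref{hat_f}) forces $\hat{f}^2_s \subseteq \hat{f}^2_t$; restricting to $G_0$ yields the $\ell = 0$ case. For item~(5), let $m := \mathbf{n}(a)$ and pick $x^* \in \mrm{supp}(a) \cap (X_m \setminus X_{<m})$; since $x^* \in \mrm{dom}(f_t)$, Obs.~\ref{observation_new_elements_are_moved}(\ref{obs_item2}) forces $f_t(x^*) \in X_{m'} \setminus X_{<m'}$ for some $m' > m$, so $\mathbf{n}(b) \geq m' > m$.

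Item~(1) combines (4) with Def.~\ref{hyp_A2}(\ref{item_meet_tree}): upward closure is (4); for any $t_1, t_2$ in the set I lift both to a common level $T_n$ (using $T_{n'} \subseteq T_n$), apply the meet clause to $r := t_1 \wedge t_2$ to get $\mrm{ran}(f_r) = \mrm{ran}(f_{t_1}) \cap \mrm{ran}(f_{t_2}) \supseteq \mrm{supp}(b)$, and use $f_r \subseteq f_{t_1}$ to conclude $\hat{f}^2_r(a) = b$; picking $r^*$ of minimal level in the set presents it as the cone $\{s : s \geq_T r^*\}$. Item~(2) is a direct induction: $\hat{f}^0_t$ sends any element of $X$ to an element of $X$, so any $<_*$-witnessing chain between elements of $X$ stays in $X$ and is a $<_X$-chain (and conversely). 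Item~(7) is the structural bridge: from a witnessing chain $a_1 = a_0, a_1, \ldots, a_n = a_2$, each $\hat{f}^0_{t_\ell}$ restricts to a coefficient-preserving bijection $\mrm{supp}(a_\ell) \to \mrm{supp}(a_{\ell+1})$, so composing these bijections and picking the permutation of $\bar{x}^1$ that tracks them yields vectors $\bar{y}^\ell \in \mrm{seq}_k(X)$ with $f_{t_\ell}(\bar{y}^\ell) = \bar{y}^{\ell+1}$, $(\bar{y}^0, \bar{y}^n) = (\bar{x}^1, \bar{x}^2)$, and matched coefficients $q^1_i = q^2_i$, giving $\bar{x}^1 \leq^k_X \bar{x}^2$.

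The hard part is item~(6). Given two witnessing chains from $a$ to $b$, I lift each to a chain in $(\mrm{seq}_k(X), <^k_X)$ via item~(7), then combine Obs.~\ref{unique_bart} (which canonicalises the witnessing $\bar{t}$), Obs.~\ref{observation_Xtree}(\ref{observation_Xtree_item10}) (which forces the terminal entry of the $\mrm{seq}_k$-chain), and the tree structure of $(\mrm{seq}_k(X), <^k_X)$ from Obs.~\ref{observation_Xtree}(\ref{ktree}) to force all intermediate $\bar{y}^\ell$'s, and hence all $a_\ell$'s. Finally, item~(3) is assembled from (5), (6), and (7): for $a \in G^+_0$ the set of $<_*$-predecessors is in coefficient-preserving bijection with the $<^k_X$-predecessors of $\bar{x}^a$, which by Obs.~\ref{observation_Xtree}(\ref{ktree}) is a finite well-ordered set; countability follows from $|G_0| = \aleph_0$, and the $\omega$-levels come from Hyp.~\ref{hyp_A1}(\ref{omega_levels}) together with Def.~\ref{hyp_A2}(\ref{def_of_f_t}).
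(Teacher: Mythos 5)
Your proposal is correct and follows essentially the same route as the paper, which explicitly writes out only item~(5) (via the maximal-level support element and Observation~\ref{observation_new_elements_are_moved}(\ref{obs_item2}), exactly as you do) and dismisses the remaining items as ``unraveling definitions''. Your sketches for (1), (2), (4), (6), (7) and (3) supply precisely the unravelling the paper omits, invoking the right clauses (Def.~\ref{hyp_A2}(\ref{item_fs_sub_ft}),(\ref{item_meet_tree}), Obs.~\ref{unique_bart}, Obs.~\ref{observation_Xtree}(\ref{ktree}),(\ref{observation_Xtree_item10})), so no gap.
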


	\begin{proof} Unraveling definitions, we elaborate only on item (5). As $a \neq 0$, let $a = \sum_{i \leq n} q_i x_i$, $x_i \in X$ with no repetitions, $q_i \in \mathbb{Q}^+$. Let $x_i \in X_{k(i)} \setminus X_{<k(i)}$ and w.l.o.g. $k(i) \leq k(i+1)$, for $i < n$ (cf. Observation~\ref{observation_Xtree}(\ref{wlog_reasonable})). Clearly $a \in \langle X_{k(n)} \rangle^*_{G_2}$ but $a \notin \langle X_{<k(n)} \rangle^*_{G_2}$. As $\hat{f}^2_t(a)$ is well-defined, clearly $\{x_i : i \leq n \} \subseteq \mrm{dom}(f_t)$ and $b = \hat{f}^2_t(a) = \sum_{i \leq n} q_i f_t(x_i)$ and, as $f_t$ is $1$-to-$1$, the sequence $(f_t(x_i) : i \leq n)$ is with no repetitions. By Observation~\ref{observation_new_elements_are_moved}(\ref{obs_item2}) applied with $n$ there as $k(n)$ here, $f_t(x_n) \notin \langle X_{k(n)} \rangle^*_{G_2} $, hence \mbox{we have that  $\mathbf{n}(b) \geq n(f_t(x_n)) > k(n) = \mathbf{n}(a)$.}
%
\end{proof}

	\begin{cclaim}\label{claim9A} If (A), then (B), where:
	\begin{enumerate}[(A)]
	\item
	\begin{enumerate}[(a)]
	\item $a, b_\ell \in G^+_2$, for $\ell < \ell_*$;
	\item $a \leq_* b_\ell$ and the $b_\ell$'s are with no repetitions;
	\item $a = \sum \{ q_i x_i : i < j \}$, $q_i \in \mathbb{Q}^+$;
	\item $\bar{x} = (x_i : i  < j) \in \mrm{seq}_j(X)$ and it is reasonable;
		\end{enumerate}
	\item there are, for $\ell < \ell_*$, $\bar{y}^\ell = (y_{(\ell, i)} : i < j)$ such that:
	\begin{enumerate}
	\item $y_{(\ell, i)} = : y^\ell_i \in X$ and $\bar{x} \leq^j_X \bar{y}^\ell$ (cf. Definition~\ref{def_reasonable}(\ref{<^j_*}));
	\item $b_\ell = \sum \{ q_i y_{(\ell, i)} : i < j \}$, (so the $\bar{y}^\ell$ are pairwise distinct, as the $b_\ell$'s are);
	\item $(y_{(\ell, i)} : i < j) \in \mrm{seq}_j(X)$ and it is reasonable;
	\item if $j > 1$ and $\ell_* > 1$, then there are $\ell_1 \neq \ell_2 < \ell_*$ and $i_1, i_2 < j$ such that:
	\begin{enumerate}[(i)]
	\item if $\ell < \ell_*$, $i < j$ and $y_{(\ell, i)} = y_{(\ell_1, i_1)}$, then $(\ell, i) = (\ell_1, i_1)$;
	\item if $\ell < \ell_*$, $i < j$ and $y_{(\ell, i)} = y_{(\ell_2, i_2)}$, then $(\ell, i) = (\ell_2, i_2)$.
	\end{enumerate}
	\end{enumerate}
	\end{enumerate}
\end{cclaim}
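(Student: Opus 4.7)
The plan is to derive conclusions (B)(1)--(3) by applying Lemma~\ref{G0_is_tree}(\ref{after_replacing}) once for each $\ell < \ell_*$, and to establish (B)(4) by an extremal argument in the tree $(\mathrm{seq}_j(X), <^j_X)$ sharpened via Observation~\ref{observation_Xtree}(\ref{observation_Xtree_item10}).

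For (B)(1)--(2), I would proceed $\ell$ by $\ell$. If $a = b_\ell$ (at most one such $\ell$, since the $b_\ell$'s are pairwise distinct), set $\bar{y}^\ell := \bar{x}$. Otherwise $a <_* b_\ell$, and any chain $a = a_0, \dots, a_n = b_\ell$ realizing this via the partial automorphisms $\hat{f}^0_{t_k}$ of $G_0$ (Definition~\ref{def_G02_cohopfian}(\ref{hatf_on_G2})) can be pushed coordinate-wise along $\bar{x}$ to produce $\bar{y}^\ell$; linearity of the $\hat{f}^2_{t_k}$ combined with injectivity of $\bar{x}$ then forces $b_\ell = \sum_{i < j} q_i\, y_{(\ell, i)}$ with matching coefficients, which is exactly the content of Lemma~\ref{G0_is_tree}(\ref{after_replacing}) with $\bar{x}$ held fixed (the canonical chain approach bypasses the need to permute $\bar{x}$). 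Reasonableness of each $\bar{y}^\ell$ (conclusion (B)(3)) follows by iterating Observation~\ref{observation_Xtree}(\ref{reasonable_implies_reasonable}) along this chain, starting from the reasonable $\bar{x}$ given by (A)(d).

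The combinatorial core is (B)(4). I would choose $\ell_1, \ell_2 < \ell_*$ with $\ell_1 \neq \ell_2$ so that $\bar{y}^{\ell_1}$ is $<^j_X$-maximal in the finite set $\{\bar{y}^\ell : \ell < \ell_*\}$ and $\bar{y}^{\ell_2}$ is $<^j_X$-maximal in $\{\bar{y}^\ell : \ell < \ell_*,\, \ell \neq \ell_1\}$, and take $i_1 := i_2 := j-1$. To verify uniqueness of $y_{(\ell_1, j-1)}$, suppose $y_{(\ell, i)} = y_{(\ell_1, j-1)}$ with $(\ell, i) \neq (\ell_1, j-1)$. The case $\ell = \ell_1$ is excluded by injectivity of $\bar{y}^{\ell_1}$, and the case $\ell \neq \ell_1$ with $i = j-1$ is excluded by Observation~\ref{observation_Xtree}(\ref{observation_Xtree_item10}), which would force $\bar{y}^\ell = \bar{y}^{\ell_1}$ and hence $b_\ell = b_{\ell_1}$. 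In the remaining case $\ell \neq \ell_1$ and $i < j-1$, the common descendant $y_{(\ell_1, j-1)}$ of $x_i$ and $x_{j-1}$ in the tree $(X, <_X)$, together with reasonableness of $\bar{x}$, forces $x_i <_X x_{j-1}$; this lets one locate a stage $t_*$ in the chain realizing $\bar{x} \leq^j_X \bar{y}^\ell$ at which coordinate $i$ passes through $x_{j-1}$, and transport the tail $f_{s_{t_*}}, \dots, f_{s_{n-1}}$ along $\bar{y}^{\ell_1}$ to produce a strict $<^j_X$-extension of $\bar{y}^{\ell_1}$ which I would identify with one of the already-listed $\bar{y}^\ell$'s, contradicting maximality. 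The argument for $\ell_2$ is analogous; the only extra subcase is that $y_{(\ell_2, j-1)}$ could coincide with a non-last coordinate of $\bar{y}^{\ell_1}$ itself, which forces $\bar{y}^{\ell_2} <^j_X \bar{y}^{\ell_1}$ (compatible with the first maximality) but then contradicts the second maximality after re-reading the duplicating entry through Observation~\ref{observation_Xtree}(\ref{observation_Xtree_item10}).

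The principal obstacle is precisely the tail-extension step in (B)(4): verifying the domain constraints of Definition~\ref{hyp_A2}(\ref{def_of_f_t}) so that the composition $f_{s_{t_*}} \circ \cdots \circ f_{s_{n-1}}$ is actually applicable to all coordinates of $\bar{y}^{\ell_1}$, and then recognizing the resulting strict $<^j_X$-extension as one of the $\bar{y}^\ell$'s rather than as a genuinely new sequence outside the given family. This pivots on Definition~\ref{hyp_A2}(\ref{item_meet_tree}), which pins down intersections of ranges of the $f_t$'s through meets in $T$, and Definition~\ref{hyp_A2}(\ref{hyp_A2_last}), which enforces consistency on overlapping domains; together they confine the combinatorial configuration produced by a hypothetical duplication to the finite family $\{\bar{y}^\ell : \ell < \ell_*\}$ and thereby close the contradiction.
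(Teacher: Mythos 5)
Your handling of clauses (B)(a)--(c) matches the paper's (both go through Lemma~\ref{G0_is_tree}(\ref{after_replacing}) and Observation~\ref{observation_Xtree}(\ref{reasonable_implies_reasonable})), but your argument for (B)(d) has a genuine gap. You pick $\bar{y}^{\ell_1}$ maximal in $\{\bar{y}^\ell : \ell < \ell_*\}$, $\bar{y}^{\ell_2}$ maximal in the remainder, and set $i_1 = i_2 = j-1$. This choice fails whenever $\bar{y}^{\ell_2} <^j_X \bar{y}^{\ell_1}$, in particular when the family is linearly ordered by $\leq^j_X$. Concretely, with $j = 2$ and $\ell_* = 2$: take $x_0 <_X x_1$ (so $\bar{x} = (x_0, x_1)$ is reasonable), $\bar{y}^0 = f_s(\bar{x}) = (x_1, v)$ with $v = f_s(x_1)$, and $\bar{y}^1 = f_{s'}(\bar{y}^0) = (v, w)$ for some $s \leq_T s'$. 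Then $\ell_1 = 1$, $\ell_2 = 0$, and $y_{(\ell_2, j-1)} = v = y_{(\ell_1, 0)}$, so clause (ii) of (B)(d) fails for your choice. Your proposed rescue --- that such a coincidence ``contradicts the second maximality'' --- is not a contradiction: $\bar{y}^{\ell_2} <^j_X \bar{y}^{\ell_1}$ is entirely compatible with $\bar{y}^{\ell_2}$ being maximal among $\{\bar{y}^\ell : \ell \neq \ell_1\}$, and Observation~\ref{observation_Xtree}(\ref{observation_Xtree_item10}) only controls coincidences between \emph{last} coordinates, not a last coordinate reappearing as an earlier coordinate of a higher element.

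The paper's proof splits into two cases precisely to dodge this. If $\{\bar{y}^\ell : \ell < \ell_*\}$ is not a chain, it takes two $\leq^j_X$-incomparable locally maximal elements and their last (hence top-level) coordinates. If it is a chain $\bar{y}^0 <^j_X \cdots <^j_X \bar{y}^{\ell_*-1}$, it takes the \emph{first} coordinate of the \emph{bottom} element together with the last coordinate of the top one: since each application of an $f_t$ strictly raises $\mathbf{n}$ (Lemma~\ref{G0_is_tree}(\ref{n[a]1})), $y_{(0,0)}$ lies at a strictly lower level than every coordinate of every $\bar{y}^\ell$ with $\ell > 0$, and $y_{(\ell_*-1, j-1)}$ at a strictly higher level than everything else, so each occurs exactly once. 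Your argument needs this asymmetric choice in the chain case. Separately, the tail-transport step you sketch for the subcase $\ell \neq \ell_1$, $i < j-1$ cannot be closed as written: the strict $<^j_X$-extension of $\bar{y}^{\ell_1}$ you construct has no reason to reappear in the finite family $\{\bar{y}^\ell : \ell < \ell_*\}$, which is just the set of preimages of the given $b_\ell$'s and carries no closure property under extension.
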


\begin{proof} By the definition of $\leq_*$ there are $(y_{(\ell, i)} : i < j, \, \ell < \ell_*)$ and by by \ref{observation_Xtree}(\ref{reasonable_implies_reasonable}) and \ref{G0_is_tree}(\ref{after_replacing}) they satisfying clauses $(a)$-$(c)$ of $(B)$.
Recall that $(\{\bar{y} : \bar{x} \leq^j_X \bar{y}\}, \leq^j_X)$ is a tree (as $(\mrm{seq}_j(X), \leq^j_X)$ is a tree). We now show $(B)$(d). There are two cases.
\newline \underline{Case 1}. $\{\bar{y}^\ell : \ell < \ell_* \}$ is not linearly ordered by $\leq^j_X$.
\newline  Then there are $\ell(1) \neq \ell(2) < \ell_*$ such that $\bar{y}^{\ell(1)}, \bar{y}^{\ell(2)}$ are locally $\leq^j_X$-maximal. So we can choose $i_1, i_2 < j$ such that we have the following:
$$x^{\ell_1}_{i_1} \in X_{\mathbf{n}(b_{\ell_1})} \setminus X_{< \mathbf{n}(b_{\ell_1})} \text{ and } x^{\ell_2}_{i_2} \in X_{\mathbf{n}(b_{\ell_2})} \setminus X_{< \mathbf{n}(b_{\ell_2})},$$
notice that using the assumption that the sequences are reasonable we can choose $i_1 = j-1 = i_2$, see \ref{G0_is_tree}(\ref{n[a]1}) and \ref{observation_Xtree}(\ref{observation_Xtree_item10}). Hence, $\ell_1, \ell_2, i_1, i_2$ are as required for (d). 

\smallskip
\noindent \underline{Case 2}. Not Case 1.
\newline So w.l.o.g. we have that, for every $\ell < \ell_*-1$, $\bar{y}^\ell <^j_X\bar{y}^{\ell+1}$. Now, for $\ell < \ell_*$ and $i < j$, let $n(\ell, i) = \mathbf{n}(y^\ell_i)$. Let then: 
\begin{enumerate}[$(\cdot_1)$]
	\item $i(1) < j$ be such that $i < j$ implies $n(0, i) \geq n(0, i(1))$;
	\item $i(2) < j$ be such that $i < j$ implies $n(\ell_*-1, i) \leq n(\ell_*-1, i(2))$.
\end{enumerate}
Then $(0, i(1))$, $(\ell_*-1, i(2))$ are as required. As, for $\ell < \ell_*$, $\bar{y}^\ell$ is reasonable we can actually choose $i(1), i(2)$ such that  $i(1) = 0$ and $i(2) = j_*-1$. 
\end{proof}

	Now we turn to the groups which we shall actually use, i.e., the groups $G_1 = G_1[T]$ defined in \ref{def_G1_cohopfian}(\ref{def_G1}) below. Our aim is to include among the partial automorphisms of $G_1$ all the maps of the form $\hat{f}_t$, i.e., the maps induced by the $f_t$'s, but we want in addition that $G_1$ is minimal modulo this. So to each $a \in G^+_0$ we assign a prime number $p_a$ and add $p^{-m}a$ to $G_1$, for all $m < \omega$. But in order to respect the $\hat{f}_t$'s, when $a \in \mrm{dom}(\hat{f}_t)$ we have to add also $p^{-m}\hat{f}_t(a)$ to $G_1$, for all $m < \omega$. Of course all the $\hat{f}_s$'s have to respect this, so we add also $p^{-m}\hat{f}_{\bar{t}}(a)$ to $G_1$, for all $m < \omega$, where $\bar{t} = (t_0, ..., t_n)$ and $\hat{f}_{\bar{t}} = \hat{f}_{t_n} \circ \cdots \circ \hat{f}_{t_0}$ (and $\hat{f}_{\bar{t}}(a)$ is well-defined). This is done in \ref{def_G1_cohopfian}. In \ref{lemma_Pb_infinite}-\ref{lemma_pre_main_th} we analyze the groups $G_{(1, p)} = \{a \in G_1 : G_1 \models p^\infty \vert \; a \}$.

	\begin{definition}\label{def_G1_cohopfian} Let $(p_a : a \in G^+_0)$ be a sequence of pairwise distinct primes s.t.:
$$a = \sum_{\ell < k} q_\ell x_\ell, \, q_\ell \in \mathbb{Z}^+, \, (x_\ell : \ell < k) \in \mrm{seq}_k(X) \Rightarrow p_a \not \vert \; q_\ell. 
$$	\begin{enumerate}[(1)] 
	\item\label{P_a} For $a \in G^+_0$, let:
	$$\mathbb{P}^{\leq_*}_a = \{ p_b : b \in G^+_0, b \leq_* a \}.$$ 
	\item\label{def_G1} Let $G_1 = G_1[\mathfrak{m}] = G_1[\mathfrak{m}(T)] = G_1[T]$ be the subgroup of $G_2$ generated by:
$$\{ m^{-1} a : a \in G^+_0, \; m \in \omega \setminus \{ 0 \} \text{ a power of a prime from $\mathbb{P}^{\leq_*}_a$}\}.$$
	\item\label{def_G1p} For a prime $p$, let $G_{(1, p)} = \{ a \in G_1 : a \text{ is divisible by $p^m$, for every $0 < m < \omega$}\}$ (notice that, by Observation~\ref{generalG1p_remark}, $G_{(1, p)}$ is always a pure subgroup of $G_1$).
	\item For $b \in G^+_1$, let $\mathbb{P}_b = \{p_a : a \in G^+_0, G_1 \models \bigwedge_{m < \omega} p^m_a \vert \, b\}$.
	\item For $t \in T$ and $\ell \in \{0, 1, 2\}$, let:
	$$H_{(\ell, t)} = \langle x : x \in \mrm{dom}(f_t)\rangle^*_{G_\ell} \;\; \text{ and } \;\; I_{(\ell, t)} = \langle x : x \in \mrm{ran}(f_t)\rangle^*_{G_\ell}$$
\end{enumerate}
\end{definition}

	\begin{remark}\label{lemma_Pb_infinite}
	\begin{enumerate}[(1)]
	\item If $a, b \in G^{+}_1$ and $\mathbb{Q}a = \mathbb{Q}b \subseteq G_2$, then $\mathbb{P}_{a} = \mathbb{P}_{b}$. 
	\item\label{lemma_Pb_infinite_3} If $a \leq_* b$, then $\mathbb{P}_a \subseteq \mathbb{P}_b$.
	\end{enumerate}
	\end{remark}

	\begin{proof} Essentially as in Observation~\ref{generalG1p_remark}.
\end{proof}

%
%

	Here we look more deeply at $G_1$. The crucial point is that any endomorphism of $G_1$ maps $G_{(1, p)} = \{a \in G_1 : \text{ for all } m < \omega, \; p^m \vert \, a \}$ into itself, and so the following characterization of $G_{(1, p)}$ will allow us to reconstruct information on the action of the $f_t$'s on $X$, and thus eventually to reconstruct the tree $T$, to some extent.

	\begin{lemma}\label{lemma_pre_main_th}\begin{enumerate}[(1)]
	\item\label{lemma_pre_main_th_item0} For $b \in G^+_0$ we have that $\mathbb{P}^{\leq_*}_b = \mathbb{P}_b$.
	\item\label{G1p_pure_co-hop} If $p = p_a$, $a \in G^+_0$, then:
	$$G_{(1, p)} = \langle b \in G^+_0 : a \leq_* b \rangle^*_{G_1}.$$
	\item For $t \in T$, $H_{(1, t)} := H_{(2, t)} \cap G_1$ and $I_{(1, t)} := I_{(2, t)} \cap G_1$ are pure in $G_1$.
	\item\label{convention_hatf_G1} For $\hat{f}^{2}_t$ as in Definition~\ref{def_G02_cohopfian}(\ref{hat_f}), $\hat{f}^{2}_t [H_{(1, t)}] \subseteq I_{(1, t)}$. We define:
	$$\hat{f}^{1}_t = \hat{f}^{2}_t \restriction H_{(1, t)},$$
and for $\bar{t}$ a finite sequence of members of $T$ we let:
	$$\hat{f}^{1}_{\bar{t}} = (\cdots \circ \hat{f}^{1}_{t_\ell}\circ \cdots).$$
	\item\label{item4} $\hat{f}^1_t \restriction H_{(1, t)} = \hat{f}^2_t \restriction H_{(1, t)}$ is into $I_{(1, t)} \leq G_1$ but it is not onto $I_{(1, t)}$.
	\item\label{item5} Assume $a = \sum_{\ell < k} q_\ell x_\ell \in G_0$, $k > 0$, $x_\ell \in X$, $q_\ell \in \mathbb{Q}^+$, $\bar{x} = (x_\ell : \ell < k) \in \mrm{seq}_k(X)$ and $p = p_a$. If $b \in G_{(1, p)}$, then there are $j >0$, $m>0$, and, for $i < j$, $\bar{y}^i$, $b_i$ and  $q'_i \in \mathbb{Q}^+$ such that the following conditions are verified:
	\begin{enumerate}[(a)]
	\item for $i < j$, $\bar{x} \leq^k_X \bar{y}^i$;
	\item $(b_i = \sum_{\ell < k} q_\ell y^i_\ell : i < j)$ is linearly independent;
	\item $b = \sum \{q'_i b_i : i < j \}$;
	\item for $i < j$, $m a \leq_* m b_i$.
	\end{enumerate}
\end{enumerate}
\end{lemma}

	\begin{proof} Item (1) is easy. We prove item (2). The RTL inclusion is clear by \ref{def_G1_cohopfian}(2). We prove the other implication. To this extent:
		\begin{enumerate}[$(*_1)$]
	\item $a \in G^+_0$, $p = p_a$ and we let $W_p := \{b \in G^+_0 : a \leq_* b\}$.
\end{enumerate}
We claim:
	\begin{enumerate}[$(*_2)$]
	\item $W_p$ is a linearly independent subset of $G_2$, as a $\mathbb{Q}$-vector space.
\end{enumerate}
[Why $(*_2)$? Let $k \geq 1$, $\bar{x} \in \mrm{seq}_k(X)$, $\bar{q} \in (\mathbb{Z}^+)^k$ and $a = \sum \{q_\ell x_\ell : \ell < k\}$ (recall that $a \in G^+_0$).
W.l.o.g. $\bar{x}$ is reasonable. Now, toward contradiction, assume that $n \geq 1$, $b_i \in W_p$, for $i < n$, $(b_i : i < n)$ is without repetitions and there are $q^i \in \mathbb{Q}^+$, for $i < n$, such that:
\begin{enumerate}[$(*_{2.1})$]
	\item $\sum \{q^i b_i : i < n\} = 0$. 
\end{enumerate} 
For each $i < n$, let $b_i = \sum \{q_\ell x_{(i, \ell)} : \ell < k\}$, where $\bar{x} \leq^k_X \bar{x}_i = (x_{(i, \ell)} : \ell < k)$. As $a \in G^+_1$, clearly $n > 1$, and by \ref{observation_Xtree}(\ref{ktree})
 there is $i_* < n$ such that $\bar{x}_{i_*}$ is $<^k_X$-maximal in $\{\bar{x}_i : i < n\}$. As $\bar{x}$ is reasonable, so is $\bar{x}_{i_*}$ and so $x_{(i_*, n-1)}$ appears exactly once in $(*_{2.1})$, so recalling $q^{n-1} \in \mathbb{Q}^+$ we get a contradiction, and so $(*_2)$ holds indeed.]
\begin{enumerate}[$(*_{3})$]
	\item Let $\mathcal{U} \subseteq X$ be such that:
	\begin{enumerate}[(a)]
	\item if $y \in \mathcal{U}$, then $y \notin \langle W_p\rangle^*_{G_2}$;
	\item $\mathcal{U} \cup W_p$ is linearly independent;
	\item under conditions (a), (b), $\mathcal{U}$ is maximal.
	\end{enumerate}
\end{enumerate} 
Clearly $\mathcal{U}$ is well-defined, and we have:
\begin{enumerate}[$(*_{4})$]
	\item 
	\begin{enumerate}[(a)]
	\item the disjoint union $\mathcal{U} \cup W_p$ is a basis of $G_2$, as a $\mathbb{Q}$-vector space;
	\item let $h \in \mrm{End}(G_2)$ be s.t. $h \restriction \mathcal{U}$ is the identity and $h(a) = 0$, for all $a \in W_p$.
	\end{enumerate}
\end{enumerate} 
Now we define:
\begin{enumerate}[$(*_{5})$]
	\item 
	\begin{enumerate}[(a)]
	\item $G'_1 := (\sum \{\mathbb{Q} y : y \in \mathcal{U}\}) + G_1$;
	\item $G''_1 = \sum \{\mathbb{Q}_p y : y \in \mathcal{U}\} + G_1$.
\end{enumerate}
\end{enumerate}
Also, we have:
\begin{enumerate}[$(*_{6})$]
	\item 
	\begin{enumerate}[(a)]
	\item $h \restriction G''_1 \in \mrm{End}(G''_1)$;
	\item if $d \in G''_1$ and $G''_1 \models p^{\infty} \, \vert \, d$, then $d = 0$;
	\item  $G'_1 = G''_2$.
\end{enumerate}
\end{enumerate} 
[Why $(*_{6})$)? Concerning clause (a), we just have to prove that if $b \in G^+_0$, $p' \in \mathbb{P}_b$, so $p' = p_d$, for some $d \leq_* b$, then, for every $m < \omega$, $h(p^{-m}_db) = p_d^{-m}h(b) \in G''_1$. 
Now, if $d = a$, then $p_d = p$ and $a \in W_p$, hence $h(b) = 0$, so fine. If on the other hand $d \neq a$, then $p_d \neq p$. Notice that the support of $h(b)$ is a subset of $\mathcal{U}$. Now, $\{b' \in G''_1 : \mrm{supp}(b') \subseteq \mathcal{U}\} = \bigoplus \{\mathbb{Q}_p x : x \in \mathcal{U}\}$, which is $p_d$-divisible, so clause (a) holds indeed. Finally clauses (b) and (c) hold by the definitions of $\mathbb{Q}_p$ and $G'_1$.]
\newline Now, let $c$ be any member of $G_{(1, p)}$. As $h \restriction G''_1 \in \mrm{End}(G''_1)$ clearly $h(c) \in G''_1$ and as $m < \omega$ implies $p^{-m} c \in G_{(1, p)}$ clearly $G''_1 \models p^\infty \vert \, h(c)$. By $(*_6)$(b), it follows that $h(c) = 0$, but this implies that $c$ belongs to the kernel of $h \restriction G_1$, which is $\langle W_p \rangle^*_{G_1}$. As $c$ was any member of $G_{(1, p)}$ we are done.
This concludes the proof of item (2).

\smallskip
\noindent Concerning item (3), notice:
$$H_{(1, t)} = \langle \mathbb{Z}x : x \in \mrm{dom}(f_t) \rangle^*_{G_1},$$
$$I_{(1, t)} = \langle \mathbb{Z}x : x \in \mrm{ran}(f_t) \rangle^*_{G_1}.$$
Item (4) is by item (2) and the following observation, if $f_t(x) = y$, then  we have $x \leq_* y$ (recall \ref{def_reasonable}(\ref{def_reasonable_item<})), and so $\mathbb{P}_x \subseteq \mathbb{P}_y$ (cf. \ref{lemma_Pb_infinite}(\ref{lemma_Pb_infinite_3})). Concerning item (5), assume that $0 < n < \omega$, $t \in T_n \setminus T_{< n}$, $x \in X_{n-1} \setminus X_{< n-1}$ and let $y = f_t(x) \in X_n \setminus X_{< n}$ (cf. Observation~\ref{observation_new_elements_are_moved}), notice that in particular $x <_* y$.
\noindent So $p_y$ is well-defined, since $y \in G^+_0$, and we have the following:
\begin{enumerate}[(a)]
	\item $G_1 \models p_y \not| \; x$, and so $H_{(1, t)} \models p_y \not| \; x$ (as $H_{(1, t)}$ is pure in $G_1$, cf. item (3));
	\item $G_1 \models \bigwedge_{m < \omega} p^m_y \, | \, y$.
\end{enumerate}
[Why (b)? By the definition of $G_1$. Why (b)? Recalling that $x <_* y$.]
\newline But then, since by item (4), $\hat{f}_t \restriction H_{(1, t)}$ is an embedding of $H_{(1, t)}$ into $I_{(1, t)}$ we have that $\hat{f}_t[H_{(1, t)}] \models p_y \not\!| \, f(x) \wedge  f(x) = y$. On the other hand, since $I_{(1, t)}$ is pure in $G_1$ (cf. (3) of this lemma) we have that for every $m < \omega$, $p^{-m}_y y \in I_{(t, 1)}$ (cf. ~\ref{obs_pure_TFAB}). Finally, item (6) is by clause (2) and unraveling definitions.
\end{proof}

	We now prove the main theorem of this section, namely \ref{main_th_Sec1}. Notice that in \ref{main_th_Sec1}(2) below we prove more than needed in order to show that the set of endorigid groups in $\mrm{TFAB}_\omega$ is complete co-analytic, as, in combination with \ref{main_th_Sec1}(1) and \ref{main_th_Sec1}(3), it would suffice to show that if $T$ is well-founded then there is an endomorphism of $G_1$ which is not multiplication by an integer, we show that in addition such an endomorphism can be taken to be $1$-to-$1$ and s.t. $G_1/f[G_1]$ is not torsion.

	\begin{theorem}\label{main_th_Sec1} Let $\mathfrak{m}(T) \in \mathrm{K}^{\mrm{ri}}_1(T)$.
	\begin{enumerate}[(1)]
	\item We can modify the construction so that $G_1[\mathfrak{m}(T)] = G_1[T]$ has domain $\omega$ and the function $T \mapsto G_1[T]$ is Borel (for $T$ a tree with domain $\omega$).
	\item\label{non-well-found_co} If $T$ is not well-founded, then $G_1[T] = G_1$ has a $1$-to-$1$ $f \in \mrm{End}(G_1)$ which is not multiplication by an integer and such that $G_1/f[G_1]$ is not torsion.
	\item If $T$ is well-founded, then $G_1[T]$ is endorigid.
\end{enumerate}
\end{theorem}

	\begin{proof} Item (1) is easy. We prove item (2). Let $(t_n : n < \omega)$ be a strictly increasing infinite branch of $T$. By Lemma~\ref{G0_is_tree}(\ref{G0_is_tre_item4}), $(\hat{f}^2_{t_n} : n < \omega)$ is increasing, by Definition~\ref{def_G02_cohopfian}(\ref{hat_f}), $\hat{f}^2_{t_n}$ embeds $H_{(2, t_n)}$ into $I_{(2, t_n)}$, thus $\hat{f}^2 = \bigcup_{n < \omega} \hat{f}^2_{t_n}$ is an embedding of $G_2$ into $\bigcup_{n < \omega} H_{(2, t_n)}$. Now, $(H_{(2, t_n)} : n < \omega)$ is a chain of pure subgroups of $G_2$ with limit $G_2$, because, recalling \ref{hyp_A2}(\ref{item_fs_sub_ft}), we have that:
	$$H_{(2, t_n)} = \mrm{dom}(f_{t_n}) \subseteq \mrm{dom}(f_{t_{n+1}}) \subseteq H_{(2, t_{n+1})}$$
and by \ref{hyp_A2}(c) we have that $\bigcup_{n < \omega} H_{(2, t_n)} = G_2$.
 Thus $\hat{f}^1 := \hat{f} \restriction G_1 = \bigcup_{n < \omega} \hat{f}^1_{t_n} = \bigcup_{n < \omega} \hat{f}^2_{t_n} \restriction H_{(1, t_n)}$ is an embedding of $G_1$ into $G_1$ (cf. Lemma~\ref{lemma_pre_main_th}(3), (5)), in fact we have that  $\mathrm{dom}(\hat{f}^1_{t_n}) = H_{(1, t_n)}$ (cf. Lemma~\ref{lemma_pre_main_th}(3), (5)) and $G_1 = \bigcup_{n < \omega} H_{(1, t_n)}$, where $(H_{(1, t_n)} : n < \omega)$ is chain of pure subgroups of $G_1$ with limit $G_1$. 
Clearly $\hat{f}^1$ is not of the form $g \mapsto mg$ for some $m \in \mathbb{Z} \setminus \{ 0 \}$, since for every $x \in \mrm{dom}(f_t)$ we have $x \neq f_t(x)$ (cf. Obs.~\ref{observation_new_elements_are_moved}(2)).  We claim that $G_1/\hat{f}^1[G_1]$ is not torsion. To this extent, first of all notice that $X_0 \neq \emptyset$ (by Definition~\ref{hyp_A2}(\ref{X0_not_empty})) and $X_0 \cap \mathrm{ran}(f_{t_n}) = \emptyset$ (by Definition~\ref{hyp_A2}(\ref{X0_cap_ft})). Thus, we have the following:
$$\mathrm{ran}(\hat{f}^1) \subseteq G^2_{X \setminus X_0} := \sum \{ \mathbb{Q}x : x \in X \setminus X_0 \} = \langle X \setminus X_0 \rangle^*_{G_2}.$$
Now, let $x \in X_0$, then $x \in G_1 \setminus \mrm{ran}(\hat{f}^1)$, moreover, for $q \in \mathbb{Q} \setminus \{ 0 \}$:
$$q x \notin G^2_{X \setminus X_0} \text{ and so } q x \notin \mrm{ran}(\hat{f}^1),$$
and so in particular, for every $0 < n < \omega$ we have that $nx \notin \mrm{ran}(\hat{f}^1)$, hence $n(x/(\mrm{ran}(\hat{f}^1)) \neq 0$. This concludes the proof of item (2).

\medskip
\noindent We now prove item (3). To this extent, suppose that $(T, <_T)$ is well-founded and, letting $G_1 = G_1[T]$, suppose that $\pi \in \mathrm{End}(G_1)$. We shall show that there is $m \in \mathbb{Z}$ such that, for every $a \in G_1$, $\pi(a) = ma$, i.e., $G_1$ is endorigid. We recall that the equivalence relation $E_1$ (used below) was defined in \ref{def_reasonable}(\ref{def_E_k_endo}).
\noindent
\newline \underline{Case 1}. The set $Y = \{x/E_1 : \text{ for some } y \in x/E_1, \pi(y) \notin \mathbb{Q}y\}$ is infinite.
\begin{enumerate}[$(*_1)$]
\item Choose $x_i$, $n_i$, for $i < \omega$, such that:
\begin{enumerate}
	\item $n_i$ is increasing with $i$;
	\item $x_i \in X_{n_{i+1}} \setminus X_{n_i}$;
	\item $\pi(x_i) \notin \mathbb{Q}x_i$, $\mrm{supp}(\pi(x_i)) \subseteq X_{n_{i + 1}}$;
	\item $X_{n_i} \cap x_i/E_1 = \emptyset$;
	\item $(x_i/E_1 : i < \omega)$ are pairwise distinct (this actually follows).
\end{enumerate}
\end{enumerate}
Note that, for $i < \omega$, we have:
\begin{enumerate}[$(*_2)$]
	\item $\mrm{supp}(\pi(x_i)) \subseteq x_i/E_1$, hence $\mrm{supp}(\pi(x_i)) \subseteq X_{n_{i+1}} \setminus X_{n_i}$.
\end{enumerate}
[Why? We apply \ref{lemma_pre_main_th}(\ref{item5}) with $(x_i, \pi(x_i), 1, (1))$ here standing for $(a, b, k, (q_\ell : \ell < k))$ there, so in particular $p = p_{x_i}$. In order to be able to apply \ref{lemma_pre_main_th}(\ref{item5}) we need that $b = \pi(a) \in G_{(1, p)}$, but this is clear in our case as $\pi \in \mrm{End}(G_1)$ and $p = p_{x_i}$. But then applying \ref{lemma_pre_main_th}(\ref{item5}) and writing $b = \pi(x_i)$ as there we get what we need.]

\smallskip
\noindent
For $r < \omega$, $(\mrm{supp}(x_\ell) : \ell \leq r)$ is a sequence of non-empty sets and $\mrm{supp}(x_\ell) \subseteq X_{n_{\ell+1}} \setminus X_{n_\ell}$, so it is a sequence of pairwise disjoint \mbox{non-empty sets.  Now, for $r < \omega$, let:}
$$x^+_r = \sum_{\ell \leq r} x_\ell, \; p_r = p_{x^+_r} \text{ and }\bar{x}_r = (x_\ell : \ell \leq r).$$

\smallskip
\noindent
As $\pi \in \mrm{End}(G_1)$, clearly $\pi(x^+_r) \in G_{(1, p_r)}$, hence by~\ref{lemma_pre_main_th}(\ref{item5}) applied to $x^+_r$, $\pi(x^+_r)$ here standing for $a, b$ there we can find $j_r, m_r >0$, and, for $j < j_r$, $\bar{y}^{(r, j)}$, $b^r_j$, $q^r_j \in \mathbb{Q}^+$ such that the following holds:
\begin{enumerate}[$(*_{3})$]
	\item
	\begin{enumerate}[(a)]
	\item for $j < j_r$, $\bar{x}_r \leq^{r+1}_X \bar{y}^{(r, j)}$;
	\item $(b^r_j = \sum_{\ell \leq r} y^{(r, j)}_\ell : j < j_r)$ is linearly independent;
	\item $\pi(x^+_r) = \sum \{q^r_j b^r_j : j < j_r \}$;
	\item for $j < j_r$, $m_r x^+_r \leq_* m_r b^r_j$ (and $m_r b^r_j \in G^+_0$).
\end{enumerate}
\end{enumerate}
\begin{enumerate}[$(*_{3.1})$]
	\item We define $f^1_{()}$ as the identity on $X$, hence, for $j < j_r$, TFAE:
	\begin{enumerate}[$(\cdot_1)$]
	\item $\bar{y}^{(r, j)} = \bar{x}_r$;
	\item $f^1_{()}(\bar{x}_r) = \bar{y}^{(r, j)}$;
	\item for all $0 < n < \omega$ and $\bar{t} \in T^n$, $f^1_{\bar{t}}(\bar{x}_r) \neq \bar{y}^{(r, j)}$.
	\end{enumerate}
\end{enumerate}
As $\bar{x}_r \leq^{r+1}_X \bar{y}^{(r, j)}$ we can apply \ref{unique_bart} and find a finite sequence $\bar{t}^r_j \in T^{< \omega}$ s.t.:
\begin{enumerate}[$(*_{4})$]
	\item if $\bar{x}_r <^{r+1}_X \bar{y}^{(r, j)}$, then: 
	\begin{enumerate}[(a)]
	\item $f^{1}_{\bar{t}^{r}_j}(\bar{x}_r) = \bar{y}^{(r, j)}$;
	\item $\hat{f}^{1}_{\bar{t}^r_j}(x^+_r) = b^r_j$;
	\item for $\ell \leq r$ and $j < j_r$ we have $f^{1}_{\bar{t}^{r}_j}(x_\ell) \neq x_\ell$ and $\mrm{lg}(\bar{t}^{r}_j) > 0$;	
	\item $\bar{t}^{r}_j = (t^r_{(j, \ell)} : \ell < \mrm{lg}(\bar{t}^r_j))$;
	\item if $j < j_r$ and $\ell < \mrm{lg}(\bar{t}^{r}_j)$, then:
	$$t <_T t^r_{(j, \ell)} \; \Rightarrow \; \bigvee_{m \leq r} f_{\bar{t}^r_j \restriction \ell}(x_m) \notin \mrm{dom}(f_t);$$
	\item in (e) this is equivalent to the following condition:
	$$t <_T t^r_{(j, \ell)} \; \Rightarrow \; f_{\bar{t}^r_j \restriction \ell}(x_r) \notin \mrm{dom}(f_t);$$
	\item $(\bar{t}^r_j : j < j_r)$ is without repetitions;
	\item $(f_{\bar{t}^r_j}(x_r) : j < j_r)$ is without repetitions.
\end{enumerate}
\end{enumerate}
Why $(*_{4})$? Concerning (e), recall \ref{unique_bart}. Concerning (f)-(h), recalling \ref{observation_Xtree}(\ref{item_10}), note that if $\mrm{lg}(\bar{t}^r_j) > 0$, then $\mrm{dom}(f_{\bar{t}^r_j})$ is $X_n$ for some $n < \omega$; so $x_r \in \mrm{dom}(f_{\bar{t}^r_j})$ and:
\begin{enumerate}[$(*_{4.1})$]
	\item $\ell \leq r \; \Rightarrow \; x_\ell \in \mrm{dom}(f_{\bar{t}^r_j}).$
\end{enumerate}
This concludes the proof of $(*_{4})$.
\begin{enumerate}[$(*_{5})$]
	\item For $r < \omega$ and $\ell \leq r$ we have $\pi(x_\ell) = \sum \{y^{(r, j)}_\ell : j < j_r\}$.
\end{enumerate}
[Why? Because $\{y^{(r, j)}_\ell : j < j_r\} \subseteq x_\ell/E_1$ and $(x_\ell/E_1 : \ell \leq r)$ is a sequence of pairwise disjoint sets.]
\newline Now, by induction on $r < \omega$, we choose $i_r$ and $y_r$ such that:
\begin{enumerate}[$(*_{6})$]
	\item 
	\begin{enumerate}[(a)]
	\item $i_r < j_r$, $y_r = f_{\bar{t}^r_{i_r}}(x_r) \neq x_r$, hence $\mrm{lg}(\bar{t}^r_{i_r}) > 0$;
	\item if $r > 0$, then $f_{\bar{t}^r_{i_r}}(x_{r-1}) = y_{r-1}$.
	\end{enumerate}
\end{enumerate}
Why $(*_{6})$ is possible? For $r = 0$, recall that $\pi(x_r) \notin \mathbb{Q}x_r$. For $r \geq 1$, 
by $(*_{5})$:
$$\sum \{f_{\bar{t}^r_{j}}(x_{r-1}) : j < j_r \} = \pi(x_{r-1}) = \sum \{f_{\bar{t}^{r-1}_{j}}(x_{r-1}) : j < j_{r-1}\}.$$
Now, by $(*_4)$(h) the sum in the RHS is without repetitions and of course $f_{\bar{t}^{r-1}_{i_{r-1}}}(x_{r-1})$ appears in it, hence it belongs to the support on the LHS, so for some $i_r < j_r$:
$$f_{\bar{t}^{r}_{i_{r}}}(x_{r-1}) = f_{\bar{t}^{r-1}_{i_{r-1}}}(x_{r-1}) = y_{r-1}.$$
As $f_{\bar{t}^{r}_{i_{r}}}(x_{r-1}) \neq x_{r-1}$, clearly $\mrm{lg}(\bar{t}^r_{i_r}) > 0$ and so $x_r \neq y_r$. This proves $(*_{6})$.
\newline Now, $f_{\bar{t}^{r-1}_{i_{r-1}}}(x_{r-1}) = f_{\bar{t}^{r}_{i_{r}}}(x_{r-1})$ and $\bar{t}^{r-1}_{i_{r-1}}$ satisfies $(*_4)$(e), hence by \ref{observation_Xtree}(\ref{item_12}) we have $\mrm{lg}(\bar{t}^{r-1}_{i_{r-1}}) = \mrm{lg}(\bar{t}^{r}_{i_{r}})$ and $\ell < \mrm{lg}(\bar{t}^{r}_{i_{r}})$ implies $t^{r-1}_{(i_{r-1}, \ell)} \leq_T t^{r}_{(i_{r}, \ell)}$. So $(\mrm{lg}(\bar{t}^r_{i_r}) : r < \omega)$ is constant, say constantly $k$, and if $\ell < k$, then $(t^r_{(i_r, \ell)} : r < \omega)$ is a $\leq_T$-sequence. But $x_r \notin X_{n_r}$ and so $t^r_{(i_r, \ell)} \notin T_n$, hence $(t^r_{(i_r, \ell)} : r < \omega)$ is $<_T$-increasing, and so we reach a contradiction. This concludes the proof of Case 1.

\smallskip
\noindent
 \underline{Case 2}. The set $Y = \{x/E_1 : \text{ for some } y \in x/E_1, \pi(y) \notin \mathbb{Q}y\}$ is finite and $\neq \emptyset$.
\newline Choose $x_0 \in X$ such that $\pi(x_0) \notin \mathbb{Q}x_0$. Let $n < \omega$ be such that $x_0 \in X_n$ and choose $x_1$ such that:
	\begin{enumerate}[$(\oplus_1)$]
	\item
	\begin{enumerate}
	\item $x_1 \in X \setminus \bigcup\{y/E_1 : y \in X_n\}$;
	\item $\pi(x_1) \in \mathbb{Q}x_1$.
	\end{enumerate}
\end{enumerate}
[Why possible? By the assumption in Case 2.]
\newline Notice now that:
\begin{enumerate}[$(\oplus_2)$]
	\item For $\ell \in \{1, 2\}$, $\mrm{supp}(x_\ell) \subseteq x_\ell/E_1$.
\end{enumerate}
\begin{enumerate}[$(\oplus_3)$]
	\item By \ref{lemma_pre_main_th}(\ref{item5}), there are $(\bar{t}_j, q_j : j < j_*)$ such that:
	\begin{enumerate}[(a)]
	\item $\bar{t}_j$ ($j < j_*$) are with no repetitions and $q_j \in \mathbb{Q}^+$;
	\item $\pi(x_0 + x_1) = \sum \{q_j f_{\bar{t}_j}(x_0 + x_1) : j < j_*\}$.
	\end{enumerate}
\end{enumerate}
\begin{enumerate}[$(\oplus_4)$]
	\item We have:
	\begin{enumerate}[(a)]
	\item $\pi(x_0) = \sum \{q_j f_{\bar{t}_j}(x_0) : j < j_*\}$;
	\item $\pi(x_1) = \sum \{q_j f_{\bar{t}_j}(x_1) : j < j_*\}$.
	\end{enumerate}
\end{enumerate}
[Why? $\pi(x_0) - \sum \{q_j f_{\bar{t}_j}(x_0) : j < j_*\} = - \pi(x_1) + \sum \{q_j f_{\bar{t}_j}(x_1) : j < j_*\}$. Now the LHS has support $\subseteq x_0/E_1$ and RHS has support $\subseteq x_1/E_1$. As $x_0/E_1 \cap x_1/E_1 = \emptyset$, both the LHS and the RHS are $0$, and so we are done.]
\newline However $\pi(x_1) \in \mathbb{Q}x_1$ by choice, and so:
\begin{enumerate}[$(\oplus_5)$]
	\item 
	\begin{enumerate}[(a)]
	\item for some $j < j_*$, $\bar{t}_j = ()$, w.l.o.g. for $j = 0$;
	\item for $0 < j < j_*$, $\mrm{lg}(\bar{t}_j) > 0$ (by $(\oplus_3)$(a)).
	\end{enumerate}
\end{enumerate}
\begin{enumerate}[$(\oplus_6)$]
	\item For $i = 0, 1$, let $\mathcal{E}_i$ be the following equivalence relation on $j_*$:
	$$\{(j_1, j_2) : f_{\bar{t}_{j_1}}(x_i) = f_{\bar{t}_{j_2}}(x_i) \}.$$
\end{enumerate}
\begin{enumerate}[$(\oplus_7)$]
	\item $0/\mathcal{E}_1 = \{0\}$ and if $0 < j < j_*$, then:
	\begin{enumerate}
	\item  $\sum \{q_\ell : \ell \in j/\mathcal{E}_1 \} = 0$;
	\item  $\sum \{q_\ell f_{\bar{t}_\ell}(x_1) : \ell \in j/\mathcal{E}_1\} = 0$.
\end{enumerate}
\end{enumerate}
[Why? Note that:
$$\sum \{q_\ell f_{\bar{t}_\ell}(x_1) : \ell \in j/\mathcal{E}_1\} = \sum \{q_\ell : \ell \in j/\mathcal{E}_1 \} f_{\bar{t}_j}(x_1).$$
So if $\sum \{q_\ell : \ell \in j/\mathcal{E}_1 \} \neq 0$, then $f_{\bar{t}_j}(x_1)$ belongs to the support of the RHS of $(\oplus_4)(b)$ but the support of this object is $\{x_1\}$ (by $(\oplus_1)$(b)) and $x_1 \neq f_{\bar{t}_j}(x_1)$, as $\bar{t}_j \neq ()$, together we reach a contradiction, and so we have $(\oplus_7)$(a)(b).]
\begin{enumerate}[$(\oplus_8)$]
	\item $E_1$ refines $E_0$.
\end{enumerate}
[Why? Assume that $j_1, j_2 < j_*$ and $j_1 \mathcal{E}_1 j_2$, this means that $f_{\bar{t}_{j_1}}(x_1) = f_{\bar{t}_{j_2}}(x_1)$. By \ref{hyp_A2}(\ref{hyp_A2_last}), as $x_1 \notin X_n$ we have that $X_n \subseteq \mrm{dom}(f_{\bar{t}_{j_1}}) \cap \mrm{dom}(f_{\bar{t}_{j_2}})$ and $f_{\bar{t}_{j_1}} \restriction X_n = f_{\bar{t}_{j_2}} \restriction X_n$. As $x_0 \in X_n$ we get that $f_{\bar{t}_{j_1}}(x_0) = f_{\bar{t}_{j_2}}(x_0)$, which means $j_1 \mathcal{E}_0 j_2$, as desired.]
\begin{enumerate}[$(\oplus_9)$]
	\item $0/E_0 = \{0\}$ and if $0 < j < j_*$, then:
	\begin{enumerate}
	\item $\sum \{q_\ell : \ell \in j/\mathcal{E}_0 \} = 0$;
	\item $\sum \{q_\ell f_{\bar{t}_0}(x_\ell): \ell \in j/\mathcal{E}_0 \} = 0$.
\end{enumerate}	
\end{enumerate}
[Why? By $(\oplus_7)$+$(\oplus_8)$, recalling \ref{hyp_A2}(\ref{hyp_A2_last}).]
\begin{enumerate}[$(\oplus_{10})$]
	\item $\pi(x_0) = q_0x_0$ (follows by $(\oplus_{9})$(b)).
\end{enumerate}
But $(\oplus_{10})$ contradicts our choice of $x_0$, as $\pi(x_0) \notin \mathbb{Q}x_0$.

\smallskip
\noindent
\underline{Case 3}. The set $Y = \{x/E_1 : \text{ for some } y \in x/E_1, \pi(y) \notin \mathbb{Q}y\}$ is empty.
\newline For $x \in X$, let $\pi(x) = q_x x$. Now, first of all we claim:
\begin{enumerate}[$(\star_{0.1})$]
	\item If $a \in G^+_1$, $\pi(a) = 0$ and $x/E_1 \cap \mrm{supp}(a) = \emptyset$, then $\pi(x) = 0$.
\end{enumerate}
[Why? Let $p = p_{x+a}$. Firstly, notice that $\pi(x+a) = \pi(x) + \pi(a) = q_x x$. Secondly, recalling that $x/E_1 \cap \mrm{supp}(a) = \emptyset$, notice that by \ref{lemma_pre_main_th}(\ref{G1p_pure_co-hop}) we have that $x \notin G_{(1, p)}$, but this contradicts that $x+a \in G_{(1, p)}$, as $\pi(x+a) = q_x x$.]
\begin{enumerate}[$(\star_{0.2})$]
	\item If $\pi$ is not $1$-to-$1$, then $\pi$ of the form $a \mapsto 0$, for all $a \in G_1$.
\end{enumerate}
[Why? Let $a \in G^+_1$ be such that $\pi(a) = 0$. If $y \in X \setminus \mrm{supp}(a)$, then we get that $\pi(y) = 0$, by applying $(\star_{0.1})$ to $(a, y)$. If $y \in \mrm{supp}(a)$, choose $x \in X \setminus \mrm{supp}(a)$ and apply $(\star_{0.1})$ to $(x, y)$.]
\begin{enumerate}[$(\star_{0.3})$]
	\item W.l.o.g. $\pi$ is $1$-to-$1$.
\end{enumerate}
[Why? Otherwise, by $(\star_{0.2})$, $\pi$ is multiplication by an integer and so we are done.] 
\begin{enumerate}[$(\star_1)$]
	\item $(q_x : x \in X)$ is constant.
\end{enumerate}
Why $(\star_1)$? Choose $x_0, x_1 \in X$ such that $q_{x_0} \neq q_{x_1}$ and, if possible, they are both $\neq 0$. Let $n < \omega$ be such that $x_0, x_1 \in X_n$ and choose a $<_X$-minimal $x_2 \in X_{n+1} \setminus X_n$, possible by \ref{observation_Xtree}(\ref{roots_item}).
Let $a = x_0 + x_1 + x_2$, $p = p_a$ (cf. \ref{def_G1_cohopfian}) and $\bar{x} = (x_0, x_1, x_2)$. As $a \in G_{(1, p)}$ and $\pi \in \mrm{End}(G_1)$, clearly $\pi(a) = b \in G_{(1, p)}$ and so by \ref{lemma_pre_main_th}(\ref{item5}) there are $j < \omega$ and, for $i < j$, $\bar{y}^i \in \mrm{seq}_3(X)$ and $q^i \in \mathbb{Q}^+$ such that $\bar{x} \leq^3_X \bar{y}^i$ and:
\begin{enumerate}[$(\star_{1.1})$]
	\item $b = \sum_{i < j} q^i (\sum_{\ell < 3} y^i_\ell)$.
\end{enumerate}
Notice that by $(\star_{0.2})$ we have $j > 0$ and w.l.o.g. we can assume that for $i < j-1$ we have $\bar{y}^{j-1} \not\leq^3_X \bar{y}^i$ and also that $\bar{x}$ is reasonable (so the $\bar{y}^i$'s are also reasonable).
Also:
\begin{enumerate}[$(\star_{1.2})$]
	\item $b = \pi(a) = q_{x_0} x_0 + q_{x_1} x_1 + q_{x_2} x_2$.
\end{enumerate} 
As $i < j-1$, implies $\bar{y}^{j-1} \not\leq^3_X \bar{y}^i$, clearly 
$y^{j-1}_2 \notin \{y^i_\ell : i < j-1, \ell \leq 2\} \cup \{y^{j-1}_0, y^{j-1}_1\}$ (by \ref{observation_Xtree}(\ref{observation_Xtree_item10}) and $\bar{y}^{j-1} \in \mrm{seq}_3(X)$), and so $y^{j-1}_2$ appears exactly once in the RHS of equation $(\star_{1.1})$, and so it appears in LHS of $(\star_{1.1})$, so $y^{j-1}_2 \in \mrm{supp}(b) \subseteq \{x_0, x_1, x_2\}$. But $x_2 \notin x_0/E_1 \cup x_1/E_1$, as $x_0, x_1 \in X_n$ and $x_2 \in X_{n+1} \setminus X_n$ is $<_X$-minimal. On the other hand, clearly $y^i_\ell \in x_\ell/E_1$ for $\ell \leq 2$ and $i < j$. Hence, necessarily, $y^{j-1}_2 = x_2$. Finally, as $x_2$ is $<_X$-minimal and for some $\bar{t} \in T^{< \omega}$, $f_{\bar{t}}(\bar{x}) = \bar{y}^{j-1}$, necessarily, $f_{\bar{t}}(x_2) = y^{j-1}_2$, so clearly $\bar{t} = ()$. Hence, $\bar{y}^{j-1} = \bar{x}$ and of course $\bar{x} \leq^3_X \bar{y}$ implies $f_{\bar{t}}(\bar{x}) \leq^3_X \bar{y}$.
Thus, by the statement after $(\star_{1.1})$, $j = 1$ and $\bar{y}^0 = \bar{x}$. So we have:

\begin{enumerate}[$(\star_{1.3})$]
	\item $q_{x_0} x_0 + q_{x_1} x_1 + q_{x_2} x_2 = q^0(y^0_0 + y^0_1 + y^0_2) = q^0(x_0 + x_1 + x_2)$.
\end{enumerate}
Thus, $q_{x_0} = q^0 = q_{x_1}$, contradicting our assumption that $q_{x_0} \neq q_{x_1}$. 
\begin{enumerate}[$(\star_2)$]
	\item Let $q_x = q_*$ for $x \in X$ (recalling $(\star_1)$).
\end{enumerate}
\begin{enumerate}[$(\star_3)$]
	\item $q_*$ is an integer.
\end{enumerate}
Why $(\star_3)$? Let $q_* = \frac{m}{n}$, with $m, n \in \mathbb{Z}^+$, $m$ and $n$ coprimes. Suppose that there is a prime $p$ such that $p \, \vert \, n$. Then we easily reach a contradiction noticing that:
\begin{enumerate}[$(\cdot)$]
	\item if $x \in X$ is $<_1$-minimal and $r$ is a prime different from $p_x$, then $r \not\vert \; x$;
	\item there are $<_1$-minimal $x, y \in X$ such that $x \neq y$.
\end{enumerate}
It follows that $n = 1$ and so $(*_3)$ holds.

\smallskip
\noindent Hence, our proof is complete, as Cases 1 and 2 are contradictory, while in Case 3 we showed that the arbitrary $\pi \in \mrm{End}(G_1)$ is indeed multiplication by an integer.
\end{proof}

	\begin{remark} Notice that, in the proof of \ref{main_th_Sec1}, Cases 2 and 3 do not use the assumption that $T$ is well-founded and so for an arbitrary tree $T$ (as in \ref{hyp_A1}) and $\pi \in \mrm{End}(G_1[T])$ we have:
\begin{enumerate}[(a)]
\item Case 1 happens if only if $T$ is not well-founded;
\item Case 2 never happens;
\item if Case 3 happens, then $\pi$ is multiplication by an integer.
\end{enumerate}
\end{remark}

\end{document}